\documentclass[10pt]{amsart}
\usepackage{amsthm, amsfonts, amssymb, amsmath, bbold, url, stmaryrd, graphicx, xcolor, enumitem}
\usepackage{avant}

\newcommand{\C}{\mathbb{C}}
\newcommand{\R}{\mathbb{R}}

\newcommand{\Z}{\mathbb{Z}}

\newcommand{\F}{\mathbb{F}}
\newcommand{\M}{\mathcal{M}}

\newcommand{\e}{\epsilon}

\newcommand{\Fp}{\F_{\!p}}

\usepackage[skip=4pt]{caption}

\newtheorem{thm}{Theorem}[section]
\newtheorem{lem}[thm]{Lemma}
\newtheorem{countlem}[thm]{Counting Lemma}
\newtheorem{disclem}[thm]{Discriminant Lemma}
\newtheorem{cor}[thm]{Corollary}

\theoremstyle{definition}

\newtheorem*{defn}{Definition}
\newtheorem*{rem}{Remark}

\newtheorem*{ack}{Acknowledgments}
\newtheorem*{AI}{Declaration of AI use}

\numberwithin{equation}{section}

\begin{document}
\title{Counting in Vieta graphs over $\Fp$}
\keywords{Generalized Markoff graph, Vieta graph, Jacobsthal sum.}
\subjclass{Primary: 11D25. Secondary: 05C25, 11L10.}

\date{\today}
\author{Bogdan Nica}

\begin{abstract} 
We introduce and study a finite simple graph of algebraic origin: the Vieta graph on the solution set over $\Fp$ to a symmetric, multivariate equation which is quadratic in each variable. This construction is a broad generalization of the Markoff graph over $\Fp$, extensively studied in the recent literature. We give a systematic approach, partly based on quadratic character sums, to the following basic counting questions: how many vertices does a Vieta graph have, and what is the degree distribution? We focus on explicit counts, addressing the low-dimensional cases in three and four variables. 
\end{abstract}

\address{\newline Department of Mathematical Sciences \newline Indiana University Indianapolis}
\email{bnica@iu.edu}

\maketitle

\setcounter{tocdepth}{1}
\tableofcontents

\part{Introduction}
Let $p\geq 5$ be a prime, and let $\Fp$ be the finite field with $p$ elements.

\section{The Markoff graph over $\Fp$} Consider the equation 
\begin{align}\label{eq: M}
x^2+y^2+z^2=xyz.
\end{align}
If $(x,y,z)$ is a solution to \eqref{eq: M} then so are its \emph{Vieta flips} $(yz-x,y,z)$, $(x,xz-y,z)$, $(x,y,xy-z)$. The \emph{Markoff graph over $\Fp$} is constructed is follows: the vertices are the solutions $(x,y,z)\in \Fp^3$ to the equation \eqref{eq: M}; edges connect two distinct vertices if one is a Vieta flip of the other. We note that 
edges are undirected, since the Vieta flips define involutory maps. There are no multiple edges, and no loops. In short the Markoff graph over $\Fp$ is a finite simple graph. 

The name of the graph owes to the fact that \eqref{eq: M} is equivalent, via the uniform rescalings $x:=3x$, $y:=3y$, $z:=3z$, to the equation $x^2+y^2+z^2=3xyz$. As a diophantine equation, this is the well-known Markov equation. Both transliterations--Markov and Markoff--are in use, though the latter one has stuck for the graph we are interested in. In what follows, we refer to \eqref{eq: M} as the \emph{Markoff cubic}. 

The Markoff graph over $\Fp$ has the trivial solution $(0,0,0)$ as an isolated vertex. The graph obtained by discarding the vertex $(0,0,0)$ is usually called the \emph{Markoff graph mod $p$} in the literature. A conjecture of Baragar \cite{Bar} from 1991 amounts to the statement that the Markoff graph mod $p$ is connected. A decade ago, Bourgain, Gamburd, and Sarnak \cite{BGS} have reignited interest in the connectivity conjecture, by recasting it as a `strong approximation' property. The connectivity conjecture is now known to be true for all but finitely many $p$, thanks to the remarkable work of Bourgain--Gamburd--Sarnak \cite{BGS2} and Chen \cite{Chen}; see also Martin \cite{Mar}. Further recent literature on the Markoff graph mod $p$ includes de Courcy-Ireland \cite{Cou} and Eddy--Fuchs--Litman--Martin--Tripeny \cite{EFL}. We especially refer to Silverman \cite{Sil} and Gamburd \cite{Gam} for a short, respectively a long, overview of the Markov equation and various structures related to it--including the Markoff graph mod $p$.

\begin{figure}[ht]
    \hspace*{-0.6cm}
    \includegraphics[width=1.1\textwidth]{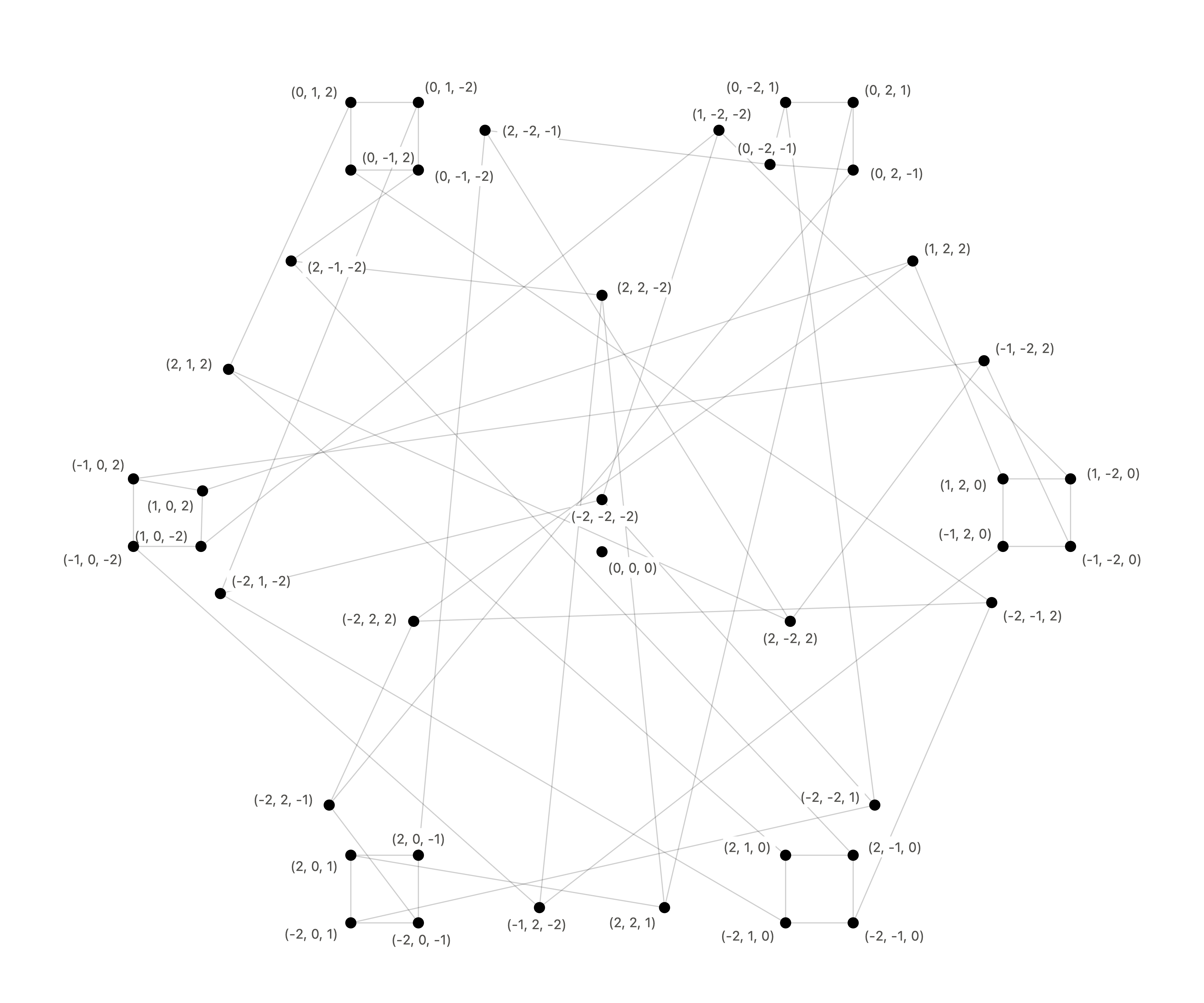}
    \caption{The Markoff graph over $\F_{\!5}$. There are $41$ vertices, all of degree $3$ except for the isolated vertex $(0,0,0)$.}
    \label{fig: C0_D0}
\end{figure}

Some very recent works have studied Markoff-type graphs over $\Fp$, defined by equations which generalize \eqref{eq: M}. The most immediate generalization is the equation
\begin{align}\label{eq: G0}
x^2+y^2+z^2=xyz+D.
\end{align}
A very recent work of Satake and Yamasaki \cite{SY} investigates the Markoff-type graph over $\Fp$ defined by \eqref{eq: G0}. Previously, de Courcy-Ireland and Lee \cite{CIL} have studied a related graph arising from \eqref{eq: G0}, in which the edges are defined by certain `twisted' Vieta flips.

There is a rich mathematical landscape around \eqref{eq: G0}, when viewed as an equation over $\R$ or $\C$. Indeed, equation \eqref{eq: G0} is related to the Fricke identity for traces of $\mathrm{SL}_2$ matrices, and to the character variety of a torus with one puncture; see, for instance, Goldman's survey \cite{Gold}. Diophantine aspects of \eqref{eq: G0} have been studied by Mordell \cite{Mor} and, more recently, by Ghosh--Sarnak \cite{GS}. Besides $D=0$, which gives the Markoff cubic, another distinguished value of the parameter $D$ in \eqref{eq: G0} is $D=4$; this defines the \emph{Cayley cubic}
\begin{align}\label{eq: G4}
x^2+y^2+z^2=xyz+4.
\end{align}

Another Markoff-type graph over $\Fp$ was considered by de Courcy-Ireland, Litman, and Mizuno \cite{CLM}. It is defined by the equation
 \begin{align}\label{eq: GCLM}
x^2+y^2+z^2+K_1yz+K_2xz+K_3xy=(3+K_1+K_2+K_3)xyz
\end{align}
where $K_1+K_2+K_3\neq -3$. For $K_1=K_2=K_3=0$, we recover the Markov equation. As a diophantine equation, \eqref{eq: GCLM} has been studied by Gyoda and Matsushita \cite{GM}. More recently, the symmetric case of \eqref{eq: GCLM}, namely
 \begin{align}\label{eq: GSLM}
x^2+y^2+z^2+K(xy+yz+zx)=(3+3K)xyz
\end{align}
where $K\neq -1$, has received attention--again, as a diophantine equation--in work of Gyoda--Maruyama \cite{GM2}, Gyoda--Maruyama--Sato \cite{GM3}, and Banaian \cite{Ban}. It appears that \eqref{eq: GSLM} is a structurally sound generalization of the Markov equation $x^2+y^2+z^2=3xyz$.

\section{Vieta graphs over $\Fp$}
 The algebraic mechanism underlying the Markoff and the Markoff-type graphs can be broadly formalized as follows: the solution set of a symmetric equation over $\Fp$, which is quadratic in each variable, can be given the structure of a finite simple graph by introducing edges which reflect the Vieta relation between the roots of a quadratic equation. We now discuss this in detail.
 
Let $f(x_1,\dots,x_n)$ be a symmetric polynomial, with coefficients in $\Fp$, which has degree $2$ in each variable. Let $V(f)\subseteq \Fp^n$ denote the algebraic variety defined by $f$, that is to say, the solution set to the equation $f(x_1,\dots,x_n)=0$. Besides the obvious permutational symmetries, the variety $V(f)$ admits $n$ involutions, which arise as follows. Fix $i\in \{1,\dots, n\}$. We write the definition equation $f(x_1,\dots,x_n)=0$ as a quadratic equation in $x_i$:
\begin{align}\label{eq: mu2}
\alpha_i x_i^2+\beta_i x_i+\gamma_i=0,
 \end{align}
where $\alpha_i$, $\beta_i$, $\gamma_i$ are symmetric polynomials in $x_1,\dots, x_{i-1},x_{i+1}, \dots, x_n$. In fact these can be expressed in terms of $f$ and its partial derivatives as 
\begin{align}
\alpha_i=\tfrac{1}{2}\partial^2_i f, \qquad \beta_i=\partial_i f-x_i\partial^2_i f, \qquad \gamma_i=f|_{x_i=0}.
\end{align}
Now if $x=(x_1,\dots, x_i, \dots ,x_n)$ is a solution to \eqref{eq: mu2} then so is $(x_1,\dots, x'_i,\dots ,x_n)$, where $x_i'$ is the other root of \eqref{eq: mu2}. Explicitly, the (additive) Vieta relation gives
\begin{align}\label{eq: mu3}
x_i'=x_i-\frac{(\partial_i f)(x)}{\tfrac{1}{2}(\partial^2_i f)(x)}.
 \end{align}
Formula \eqref{eq: mu3} is only partially defined. The issue comes from `irregular' solutions $x$ for which $(\partial^2_i f)(x)=0$. Over regular solutions, the root-flipping $x_i\mapsto x_i'$ is an involution. We may thus define an involution, the \emph{$i$-th Vieta flip}, on $V(f)$ by
\begin{align}
x=(x_1,\dots,x_i,\dots,x_n) \mapsto 
\begin{cases} (x_1,\dots,x'_i,\dots,x_n) & \textrm{if } (\partial^2_i f)(x)\neq 0,\\
x & \textrm{if } (\partial^2_i f)(x)=0.
\end{cases}
\end{align}

By using the $n$ Vieta flips obtained in this way, we turn the algebraic variety $V(f)$ into an unoriented graph. More precisely, it will be a finite simple graph--without loops or multiple edges.
 
 \begin{defn}
The \emph{Vieta graph} defined by the equation $f(x_1,\dots,x_n)=0$ is the graph with vertex set $V(f)$, in which two distinct vertices are joined by an edge whenever one is a Vieta flip of the other.
\end{defn}

We often use an alternate, more concise phrasing--the Vieta graph of $f$. We refer to the number of variables, $n$, as the dimension of the Vieta graph of $f$.
 
In this paper, we address the general problem of counting the number of vertices, and the degree distribution in Vieta graphs. Counting the number of \emph{vertices} in the Vieta graph of $f$, in other words the number of solutions in $\Fp^n$ to the polynomial equation $f(x_1,\dots,x_n)=0$, is a classical problem. But the Vieta structure on the solution set naturally brings in a plethora of new aspects. Counting the number of \emph{edges} in the Vieta graph of $f$ is a seed question which immediately calls for the degree distribution, that is to say, the number of vertices of a given degree in the Vieta graph of $f$. Our specific goal in these counting questions is to provide \emph{explicit} computations. But where to look? Beyond the Markoff equation, which other equations are interesting? This was not obvious to us from the outset. One upshot of the quest for explicit computations is that it uncovers and certifies equations that are, we believe, worthy of further study. 

Explicit counts for the number of solutions to a polynomial equation of the form $f(x_1,\dots,x_n)=0$, having degree $2$ in each variable, has been an on-going theme in Carlitz's extensive work. Especially relevant for us are his counting results from \cite{Car1, Car2, Car3}. Herein, these results are refined and interwoven into the intricate tapestry of counting in Vieta graphs. We will see, in fact, that explicit edge-counting is usually much more intricate than explicit vertex-counting--which is what Carlitz's results mean in the Vieta context. Explicit counting in Vieta graphs often draws from explicit computations of character sums; this is the particular angle that is appealing to us, and we will use it intensively in this paper. In particular, we will eventually rely on Jacobsthal sums as an important technical ingredient.

In our search for explicit computations, we narrow our focus to \emph{regular} equations. By this we mean that every solution in $V(f)$ is regular in the sense discussed above, that is to say, the second derivatives $\partial_1^2f, \dots, \partial_n^2f$ never vanish on the variety $V(f)$. This streamlines the definition of Vieta flips, for \eqref{eq: mu3} is now fully defined over $V(f)$. The simplest way to ensure regularity is to take 
\begin{align}\label{eq: mu1}
f(x_1,\dots,x_n)=x_1^2+\dots+x_n^2-\ell(x_1,\dots,x_n)
 \end{align}
where $\ell(x_1,\dots,x_n)$ is a symmetric polynomial which has degree $1$ in each variable. Indeed, in this case we have $\partial_i^2f=2$ for each $i$. Formula \eqref{eq: mu3} becomes
\begin{align*}
x_i'=(\partial_i\ell)(x_1,\dots,x_{i-1}, x_{i+1}, \dots,x_n) -x_i.
\end{align*}

In this form we are drawing closer to the Markoff graph; yet, as we will see, there is still enough generality left. All equations considered herein are in fact of the form \eqref{eq: mu1}. Let us mention, at this point, an interesting irregular equation, taken from \cite{FLST}. The equation--in fact, a parametric family--is given by 
\begin{align}\label{eq: A}
x^2+y^2+z^2+x^2y^2z^2=Axyz,
\end{align}
where $A\neq 0$. Irregularity occurs when $p\equiv 1$ mod $4$. We propose the problem of counting the number of vertices and the degree distribution in the Vieta graph associated to the equation \eqref{eq: A}.  

Obviously, Vieta graphs can be defined over any finite field whose characteristic is odd, not just over a prime field $\Fp$. We keep to $\Fp$ for the sake of connecting with the diophantine origin of some of the underlying equations, but also for the sake of simplicity--a handful of the explicit computations obtained herein take a more concise form over $\Fp$. 

The landscape of Vieta graphs invites many questions. The following stand out to us at this point:
\begin{itemize}[leftmargin=2.5em]
\item[$\bullet$] find (parametric families of) equations for which the number of vertices and the degree distribution in the corresponding Vieta graphs can be computed explicitly;
\item[$\bullet$] give asymptotic estimates for the number of vertices and the degree distribution in Vieta graphs;
\item[$\bullet$] study the connectivity of Vieta graphs--more precisely, count and classify the connected components;
\item[$\bullet$] investigate the existence and the count of small substructures in Vieta graphs: the complete graphs $K_3$, $K_4$, or $K_5$; the complete bipartite graphs $K_{2,2}$, $K_{2,3}$, or $K_{3,3}$; the short cycles $C_4$, $C_5$, or $C_6$;
\item[$\bullet$] study spectral aspects of Vieta graphs--chiefly eigenvalues, but eigenvectors as well--from the Laplacian viewpoint, which we believe to be more manageable than the adjacency viewpoint.
\end{itemize}

The connectivity question has been the most studied aspect--very intensely for the Markoff graph, and more recently in \cite{CLM} for the Markoff-type graph defined by \eqref{eq: GCLM}. Non-planarity of the Markoff graph is settled in \cite{Cou}. Non-planarity and instances of the small-structures question for the graph defined by equation \eqref{eq: G0} are studied in \cite{SY}. All these results are confined to dimension $3$, for certain Markoff-type cubics.

This paper takes a different direction from has been done in the recent literature, by  focusing on first question. Our study is fairly thorough in dimension $3$, but only fragmentary in dimension $4$. At any rate, our explicit counting results--in two different dimensions, for both cubics and quartics--suggest that the framework of Vieta graphs over $\Fp$ is viable and intriguing.

Here is the plan of the paper. Part 2 is devoted to some counting generalities that are applicable in any dimension. The bulk of the paper is in Part 3. There we discuss in detail Vieta graphs in dimension $3$ that are defined by the most general Markoff-type cubic. We identify several parametric families for which we can achieve our goal of explicit computations. In Part 4, we obtain explicit computations for two parametric families of Vieta graphs in dimension $4$ that are defined by quartics. 

\begin{ack}
I thank Roland Roeder, for informative discussions on equation \eqref{eq: GNS} and general feedback, and Ryan Eades, for lively conversations during the initial stage of this work.
\end{ack}

\begin{AI}
This work was conceived and written without use of artificial intelligence, except for the Figures which were rendered using Claude. Claude was then used again for a review of the completed draft. The final version incorporates several minor edits arising from Claude's feedback. 
\end{AI}

\bigskip
\part{Generalities}

\section{The $N_k$ counts}
Consider a Vieta graph in dimension $n$. Since there are $n$ Vieta flips, each vertex has degree at most $n$; in fact, one expects generic vertices to have degree equal to $n$. But one also expects `deficient' vertices, whose degree is less than $n$. Our goal is to determine the degree distribution in a Vieta graph: to count, for any given $d\in \{0,1,\dots, n\}$, the number of vertices of degree $d$.

Degree deficiencies occur when vertices are fixed by Vieta flips. If $f(x_1,\dots,x_n)$ is the underlying polynomial and $k\in \{0,\dots,n\}$, we let 
\begin{align}
N_k(f)=\# \Big\{x\in V(f): x \textrm{ is fixed by at least } k \textrm{ Vieta flips}\Big\}. 
\end{align}
This is well-defined, in view of the symmetry. It is obvious from the definition that the sequence of $N_k$ counts is non-increasing: $N_0(f)\geq N_1(f)\geq \dots \geq N_n(f)$. 

The $N_k$ counts underlie the degree distribution in the Vieta graph of $f$. Clearly, $N_0(f)$ counts all the vertices, while $N_n(f)$ counts the isolated vertices. On the other hand, $nN_1(f)$ overcounts the deficient vertices. In fact, by the inclusion-exclusion principle, we get the following.

\begin{lem}\label{lem: Nk}
Let $d\in \{0,1,\dots, n\}$. Then the number of vertices of degree $d$ in the Vieta graph of $f$ is given by
\begin{align}
\binom{n}{d}\sum_{k=0}^d (-1)^k \binom{d}{k} N_{n-d+k}(f).
\end{align}
In particular, there are 
\begin{align}
\sum_{k=1}^n (-1)^{k-1} \binom{n}{k} N_{k}(f)
\end{align}
deficient vertices among the $N_0(f)$ vertices of the Vieta graph of $f$.
\end{lem}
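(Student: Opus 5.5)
The plan is a standard inclusion--exclusion argument, and the one structural input is the symmetry of $f$. For a vertex $x\in V(f)$, let $F(x)\subseteq\{1,\dots,n\}$ be the set of indices $i$ such that the $i$-th Vieta flip fixes $x$. Distinct non-fixing flips act on different coordinates, so they produce distinct neighbours. Hence $\deg x = n-|F(x)|$, and a vertex has degree $d$ exactly when $|F(x)|=n-d$.

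First, for each subset $S\subseteq\{1,\dots,n\}$ put $A_S=\#\{x\in V(f): S\subseteq F(x)\}$. Permuting coordinates maps $V(f)$ to itself, because $f$ is symmetric, and it intertwines the Vieta flips accordingly. So $A_S$ depends only on $|S|$; write $A_S=a_{|S|}$. Next I relate $a_j$ to the counts $N_k(f)$. Read literally, ``fixed by at least $k$ flips'' means $|F(x)|\ge k$, and that is not the same as $a_k$. I will therefore interpret $N_k(f)$ as the number of vertices fixed by $k$ \emph{prescribed} flips, say the first $k$, so that $N_k(f)=a_k$. This is the reading under which the stated formula holds, and I believe it is what ``well-defined, in view of the symmetry'' refers to. The remaining claims are consistent with it: $N_0$ counts all vertices, $N_n$ counts the isolated vertices, and $nN_1$ overcounts the deficient vertices.

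Now fix a set $T$ with $|T|=n-d$ and count the vertices with $F(x)=T$ exactly. By inclusion--exclusion over the supersets $S\supseteq T$, this number is
\begin{align*}
\sum_{S\supseteq T}(-1)^{|S|-|T|}A_S=\sum_{k=0}^{d}(-1)^k\binom{d}{k}a_{n-d+k}.
\end{align*}
The equality holds because there are $\binom{d}{k}$ supersets with $|S|=n-d+k$. Summing over the $\binom{n}{n-d}=\binom{n}{d}$ choices of $T$ gives the first formula.

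For the count of deficient vertices, one could sum the first formula over $d<n$. It is simpler to apply inclusion--exclusion directly to the union of the events $\{i\in F(x)\}$: the number of vertices with $F(x)\ne\emptyset$ is $\sum_{k\ge1}(-1)^{k-1}\binom{n}{k}a_k$. The only delicate point is the interpretation of $N_k(f)$ just discussed; the rest is routine.
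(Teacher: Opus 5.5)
Your proof is correct and is exactly the inclusion--exclusion argument the paper relies on; the paper states the lemma as an immediate consequence of inclusion--exclusion without writing out the details you supply. You are also right to read $N_k(f)$ as the number of vertices fixed by $k$ prescribed flips rather than by ``at least $k$'' flips: that is the definition the paper actually uses, through its equational description with $i=n-k+1,\dots,n$.
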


From Lemma~\ref{lem: Nk} we quickly infer the following.

\begin{cor}\label{cor: reg}
No vertex in the Vieta graph of $f$ has degree less than $d$ if and only if $N_{n-d+1}(f)=0$. In particular, the Vieta graph of $f$ is regular if and only if $N_1(f)=0$.
\end{cor}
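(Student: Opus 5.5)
The plan is to read the statement off Lemma~\ref{lem: Nk}, using only the fact that the counts $N_k(f)$ are non-increasing in $k$. I would first note that a vertex has degree $\geq d$ exactly when it is fixed by at most $n-d$ Vieta flips. This uses regularity of the relevant setting: in general, a vertex fixed by exactly $j$ flips has degree $n-j$, since distinct non-fixing flips give distinct neighbours. Distinct flips change distinct coordinates, so their images differ from each other whenever they differ from $x$. Consequently, a vertex has degree less than $d$ if and only if it is fixed by at least $n-d+1$ flips.

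The key step is this reformulation. By definition, $N_{n-d+1}(f)$ is exactly the number of vertices fixed by at least $n-d+1$ flips. So the condition ``no vertex of degree less than $d$'' is equivalent to $N_{n-d+1}(f)=0$. No inclusion-exclusion is needed for this; Lemma~\ref{lem: Nk} serves only as a consistency check. Indeed, summing the lemma's counts over degrees $0,\dots,d-1$ should telescope to $N_{n-d+1}(f)$, and I would mention this as the alternative derivation.

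For $d=n$, the condition becomes $N_1(f)=0$, that is, no vertex is fixed by any flip. Since every vertex has degree at most $n$, having no vertex of degree less than $n$ means the graph is $n$-regular. Conversely, I should address the claim ``regular iff $N_1=0$'' when regularity is of some smaller degree $r<n$. If the graph is $r$-regular with $r<n$, then every vertex is fixed by some flip. I would argue this is excluded, or else interpret ``regular'' as $n$-regular, as the paper implicitly does.

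The main obstacle is the degree-to-fixed-flips correspondence, specifically the distinctness of neighbours. I would verify it directly: the $i$-th flip and the $j$-th flip alter different coordinates, so if both move $x$, their images are distinct.
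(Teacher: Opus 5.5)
Your degree computation is correct. Since the flips are involutions and the $i$-th flip moves only the $i$-th coordinate, a vertex fixed by exactly $j$ flips has degree $n-j$. So a vertex has degree $<d$ if and only if it is fixed by at least $n-d+1$ flips.

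The gap is the next step, ``by definition, $N_{n-d+1}(f)$ is exactly the number of vertices fixed by at least $n-d+1$ flips''. Despite the loose wording of the definition, that is not what $N_k$ means in the paper. There $N_k(f)$ counts the vertices fixed by a \emph{prescribed} set of $k$ flips, namely those indexed by $n-k+1,\dots,n$ (and possibly by others as well). The remark ``well-defined, in view of the symmetry'' says precisely that this count does not depend on which $k$ flips are prescribed. This is the reading under which the inclusion--exclusion of Lemma~\ref{lem: Nk}, the edge count $\frac{n}{2}(N_0-N_1)$, and the equational form with $i=n-k+1,\dots,n$ are correct. Under your reading, $N_1$ would equal the number of deficient vertices. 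But for the Cayley cubic the paper has $N_1=4p-4$ and $6p-8$ deficient vertices. Your proposed consistency check fails for the same reason: the lemma's counts over degrees $0,\dots,d-1$ do not telescope to $N_{n-d+1}$. For $n=d=3$ they sum to $3N_1-3N_2+N_3$.

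With the correct $N_k$ the corollary is still true, but you need one more ingredient. There are two ways to supply it.

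\emph{Via symmetry.} A coordinate permutation $\pi$ preserves $V(f)$ and conjugates the $i$-th flip into the $\pi(i)$-th flip. So a vertex fixed by the flips indexed by some $(n-d+1)$-element set $S$ is carried to one fixed by the flips $d,\dots,n$. Together with your degree formula, this shows that a vertex of degree $<d$ exists if and only if $N_{n-d+1}(f)>0$.

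\emph{Via Lemma~\ref{lem: Nk}.} This is what the paper's ``quickly infer'' amounts to. If $N_{n-d+1}=0$, then by monotonicity $N_k=0$ for all $k\geq n-d+1$, so every term in the degree-$e$ formula with $e<d$ vanishes. Conversely, the degree-$e$ count equals $\binom{n}{e}N_{n-e}$ plus a combination of $N_{n-e+1},\dots,N_n$. Hence vanishing of the counts for degrees $0,1,\dots,d-1$ forces $N_n=N_{n-1}=\dots=N_{n-d+1}=0$ inductively.

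Finally, an $r$-regular graph with $r<n$ cannot be argued away in general. Over $\Fp$ with $p\equiv 3$ mod $4$, the equation $x^2+y^2=0$ has the single solution $(0,0)$, an isolated vertex, which gives a $0$-regular graph with $N_1=1$. So ``regular'' must be read as $n$-regular, i.e.\ the case $d=n$, which is the interpretation you adopt.
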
 

A more involved by-product of the degree distribution is the edge count in the Vieta graph. Interestingly, it turns out that only $N_0(f)$ and $N_1(f)$ are involved in this count.

\begin{cor}
The number of edges in the Vieta graph of $f$ is given by
\begin{align*}
\frac{n}{2}\big(N_0(f)-N_1(f)\big).
\end{align*}
\end{cor}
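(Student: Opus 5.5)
The plan is to use the handshake lemma and count incidences between vertices and non-trivial flips. The number of edges equals half the sum of degrees, so the task reduces to showing that the sum of all degrees is $n(N_0(f)-N_1(f))$. In a simple graph the degree of $x$ is the number of its distinct neighbours. Each neighbour of $x$ has the form $\sigma_i(x)\neq x$ for some Vieta flip $\sigma_i$. So I need two facts: distinct non-fixing flips give distinct neighbours, and every non-fixing flip contributes one. For $i\neq j$, the points $\sigma_i(x)$ and $\sigma_j(x)$ differ from $x$ only in coordinates $i$ and $j$ respectively. Hence if both are different from $x$, they are different from each other. It follows that $\deg x$ is exactly the number of indices $i$ with $\sigma_i(x)\neq x$.

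Next I will double count the set $\{(x,i): x\in V(f),\ \sigma_i(x)=x\}$. Using the symmetry, the number of vertices fixed by at least $k$ flips is $N_k(f)$. The number of vertices fixed by exactly $k$ flips is therefore $N_k(f)-N_{k+1}(f)$, with the convention $N_{n+1}(f)=0$. Summing the degrees gives
\begin{align*}
\sum_{x}\deg x=\sum_{k=0}^{n}(n-k)\big(N_k(f)-N_{k+1}(f)\big).
\end{align*}
This is where I must be careful. By Abel summation the right-hand side equals $nN_0(f)-\sum_{k=1}^{n}N_k(f)$, not $n(N_0(f)-N_1(f))$. So the two agree only if $N_k(f)$, as defined, is not simply "fixed by at least $k$ flips" in the naive sense. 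Equivalently, the formula of Lemma~\ref{lem: Nk} must be the one to use.

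The resolution I expect is that the phrase "well-defined, in view of the symmetry", together with the binomial coefficients in Lemma~\ref{lem: Nk}, means $N_k(f)$ counts the vertices fixed by a \emph{specified} set of $k$ flips, say $\sigma_1,\dots,\sigma_k$. Under this reading, $N_1(f)$ is the number of vertices fixed by $\sigma_1$. The proof then becomes a direct count. The edges coming from the flip $\sigma_i$ pair up the non-fixed points of $\sigma_i$, and by the symmetry of $f$ there are $N_0(f)-N_1(f)$ such points for every $i$. So $\sigma_i$ contributes $\tfrac12(N_0(f)-N_1(f))$ edges. By the observation in the first paragraph, edges coming from different flips are distinct. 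Summing over $i$ gives $\tfrac n2(N_0(f)-N_1(f))$.

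As a consistency check, I would also derive the result from Lemma~\ref{lem: Nk}: sum $d$ times the number of vertices of degree $d$, and show that the alternating binomial sums vanish except for the coefficients of $N_0(f)$ and $N_1(f)$. This uses the identity $\sum_d d\binom nd\binom{d}{k}(-1)^k$, indexed appropriately, which collapses to $n$ times a Kronecker delta supported on $N_0$ and $N_1$. The main obstacle is pinning down this interpretation of $N_k(f)$. After that, the argument through the flip-by-flip count is immediate.
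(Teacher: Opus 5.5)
Your proposal is correct, and it takes a different and more direct route than the paper.

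\textbf{The definition of $N_k$.} Your reading of $N_k(f)$ is the right one. The equational form given just after the corollary imposes the conditions for $i=n-k+1,\dots,n$. So $N_k(f)$ counts vertices fixed by a \emph{specified} set of $k$ flips, and this is also the reading under which the inclusion--exclusion formula of Lemma~\ref{lem: Nk} holds. Your Abel-summation remark is accurate as well. Under the literal ``at least $k$ flips'' reading, the degree sum is $nN_0-\sum_{k\ge 1}N_k$. The corollary would then fail as soon as some deficient vertex is not isolated, which happens, for instance, throughout the Cayley family.

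\textbf{Comparison with the paper.} The paper's proof is the route you relegate to a consistency check. It applies the handshake lemma to the degree distribution of Lemma~\ref{lem: Nk}. It then uses $\binom{n}{k+i}\binom{k+i}{k}=\binom{n}{i}\binom{n-i}{k}$ and the vanishing of alternating binomial sums to show that the coefficient of $N_{n-i}$ is $n$ for $i=n$, $-n$ for $i=n-1$, and $0$ otherwise.

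Your main argument instead partitions the edge set by flip:
\begin{itemize}
\item The $i$-th flip is an involution. Its non-fixed points number $N_0-N_1$, the same for every $i$ by conjugating with a coordinate transposition.
\item These points pair off into $\tfrac12(N_0-N_1)$ edges.
\item Edges from different flips are distinct, since the two endpoints of an edge differ in exactly one coordinate.
\end{itemize}

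What each approach buys:
\begin{itemize}
\item Yours is shorter and avoids both the inclusion--exclusion lemma and the binomial algebra. It explains conceptually why only $N_0$ and $N_1$ survive.
\item Yours also makes explicit a fact the paper's degree formula uses tacitly: distinct non-fixing flips yield distinct neighbours.
\item The paper's route obtains the edge count as a by-product of the full degree distribution it needs anyway. The clean cancellation of coefficients then serves as a check on Lemma~\ref{lem: Nk}.
\end{itemize}
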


\begin{proof}
Let $\# E$ denote the number of edges. The handshake lemma gives
\begin{align*}
2\cdot \# E&=\sum_{d=0}^n d\cdot (\textrm{number of vertices of degree }d)\\
&=\sum_{d=0}^n d \binom{n}{d}\sum_{k=0}^d (-1)^k \binom{d}{k} N_{n-d+k}\\
&=\sum_{i=0}^n\bigg(\sum_{k=0}^{n-i}  (k+i) (-1)^k\binom{n}{k+i} \binom{k+i}{k} \bigg)N_{n-i}
\end{align*}
by setting $d:=k+i$ in the last step. Next, let us compute the parenthesized coefficient of $N_{n-i}$, which we temporarily denote by $c_{n-i}$. As 
\[\binom{n}{k+i} \binom{k+i}{k}=\binom{n}{i} \binom{n-i}{k}\]
we have
\[c_{n-i}=\binom{n}{i}\sum_{k=0}^{n-i}  (k+i) (-1)^k \binom{n-i}{k}.\]
Now
\[\sum_{k=0}^{n-i}  i (-1)^k \binom{n-i}{k}=i (1-1)^{n-i}=i\cdot \llbracket i=n\rrbracket,\]
and likewise
\[\sum_{k=0}^{n-i}  k (-1)^k \binom{n-i}{k}=\sum_{k=1}^{n-i}  (n-i)(-1)^k \binom{n-i-1}{k-1}=-(n-i)\cdot \llbracket i=n-1\rrbracket.\]
Therefore
\[c_{n-i}=\binom{n}{i}\Big(i\cdot \llbracket i=n\rrbracket-(n-i)\cdot \llbracket i=n-1\rrbracket\Big)
=\begin{cases}n & \textrm{ if } i=n,\\ -n & \textrm{ if } i=n-1,\\0 & \textrm{ if } i\leq n-2. \end{cases}\]
To conclude, 
\[2\cdot \# E=\sum_{i=0}^n c_{n-i}N_{n-i}=n N_0-nN_1.\]
The claimed formula for $\#E$ follows by rearranging.
\end{proof} 

The above counts are quite general--they hold whenever we construct a simple graph by using involutions. The specific formula of a Vieta flip comes in when we give an equational form to the $N_k$ counts. Indeed, a solution $x\in V(f)$ is fixed by the $i$-th Vieta flip when $(\partial_if)(x)=0$ or $(\partial^2_if)(x)=0$, possibly both. So we may write
\begin{align}
N_k(f)=\# \Big\{x\in V(f): (\partial_if)(x)\cdot (\partial^2_if)(x)=0 \textrm{ for } i=n-k+1, \dots, n\Big\}. 
\end{align}
In particular, if $f(x_1,\dots, x_n)=0$ is a regular equation then 
\begin{align}
N_k(f)=\# \Big\{x\in V(f): (\partial_if)(x)=0 \textrm{ for } i=n-k+1, \dots, n\Big\}. 
\end{align}

The general approach to counting the number of vertices and the degree distribution in Vieta graphs will be to work out the $N_k$ counts, followed by an application of Lemma~\ref{lem: Nk}.

\section{Quadratic character sums} 
The quadratic character $\sigma: \Fp\to \C$ is defined as follows: $\sigma(a)=1$ if $a\in (\Fp^*)^2$; $\sigma(a)=-1$ if $a\not\in (\Fp^*)^2$; and $\sigma(a)=0$ if $a=0$. The notation, $\sigma$, while not exactly standard, is meant to be suggestive: $\sigma$ is a signature-like map on squares. It is called a character on account of its multiplicativity: $\sigma(ab)=\sigma(a)\sigma(b)$ for all $a,b\in \Fp$. 

The quadratic character is useful for counting solutions to quadratic equations: the equation $x^2=c$ has $1+\sigma(c)$ solutions $x\in \Fp$; in general, for $a\neq 0$, we have
\begin{align}\label{eq: qen}
\#\big\{x \in \Fp: ax^2+bx+c=0\big\}=1+\sigma(b^2-4ac).
\end{align}

Several of our computations involve quadratic character sums. A complete, univariate quadratic character sum takes the form
\begin{align}\label{eq: chrsum}
\sum_{x\in \Fp} \sigma(P(x)) 
\end{align}
where $P(x)$ is a polynomial. Multivariate generalizations of \eqref{eq: chrsum} are easy to imagine, and in fact will show up frequently herein. If $P(x)$ is linear or quadratic, then there are well-known formulas for evaluating \eqref{eq: chrsum}: we have
\begin{align}\label{eq: qjs1}
\sum_{x\in \Fp} \sigma(bx+c) = 0 \qquad (b\neq 0),
\end{align}
and
\begin{align}\label{eq: qjs1.5}
\sum_{x\in \Fp} \sigma(ax^2+bx+c) = \begin{cases} -\sigma(a) & \textrm{ if } b^2-4ac\neq 0,\\
\sigma(a)\cdot (p-1) & \textrm{ if } b^2-4ac=0.
 \end{cases} \qquad (a\neq 0)
\end{align}
See, for instance, \cite[Thm.1.10]{N} and \cite[Thm.2.4]{N}. 

When $P(x)$ is cubic, explicit computations are quite rare. Standout instances when quadratic character sums with cubic arguments can be computed, are offered by \emph{Jacobsthal sums}.  We return to this topic in Section~\ref{sec: Jac}. For a systematic study, we refer to the recent book \cite{N}. 

A very useful tool in carrying on computations that involve cases, such as the formula \eqref{eq: qjs1.5}, is the Iverson bracket notation. This works as follows: if $\mathcal{S}$ is a statement, then $\llbracket \mathcal{S}\rrbracket=1$ if $\mathcal{S}$ is true, respectively $\llbracket \mathcal{S}\rrbracket=0$ if $\mathcal{S}$ is false. Using this notation, \eqref{eq: qjs1.5} can be expressed as follows: for $a\neq 0$,
\begin{align}\label{eq: qjs2}
\sum_{x\in \Fp} \sigma(ax^2+bx+c) =\sigma(a)\Big(\llbracket b^2=4ac\rrbracket \cdot p-1\Big).
\end{align}

Let us record slightly more technical versions of \eqref{eq: qen} and \eqref{eq: qjs2}, which include the case $a=0$.

\begin{lem} Let $f(x)=ax^2+bx+c$, and put $\Delta=b^2-4ac$. Then:
\begin{align}\label{eq: qen0}
N_0(f)=1+\sigma(\Delta)-\llbracket a=0\rrbracket+\llbracket a=b=c=0\rrbracket \cdot p
\end{align}
and
\begin{align}\label{eq: qjs0}
\sum_{x\in \Fp}\sigma(f(x))=-\sigma(a)+\sigma(a) \llbracket \Delta=0\rrbracket \cdot p+\sigma(c) \llbracket a=b=0\rrbracket \cdot p.
\end{align}
\end{lem}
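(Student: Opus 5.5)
The plan is a case split on whether $a=0$, using \eqref{eq: qen} and \eqref{eq: qjs2} for the quadratic case and direct counting otherwise. Here $N_0(f)$ simply means the number of roots $x\in\Fp$ of $f(x)=0$.

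\emph{Case $a\neq 0$.} All Iverson terms involving $a=0$ vanish. For \eqref{eq: qen0} the right side reduces to $1+\sigma(\Delta)$, which is exactly \eqref{eq: qen}. For \eqref{eq: qjs0} the last term vanishes, and the right side becomes $-\sigma(a)+\sigma(a)\llbracket \Delta=0\rrbracket p$. Since $\Delta=0$ is the same as $b^2=4ac$, this is \eqref{eq: qjs2}.

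\emph{Case $a=0$.} Then $\sigma(a)=0$ and $\Delta=b^2$, so the identities read
\[N_0(f)=\sigma(b^2)+\llbracket b=c=0\rrbracket p, \qquad \sum_{x}\sigma(bx+c)=\sigma(c)\llbracket b=0\rrbracket p.\]
I would split further on $b$.
\begin{itemize}
\item If $b\neq 0$, the linear equation $bx+c=0$ has exactly one root, and $\sigma(b^2)=1$. The character sum is $0$ by \eqref{eq: qjs1}.
\item If $b=0$, then $\sigma(b^2)=0$. The root count is $p$ if $c=0$ and $0$ otherwise, which is $\llbracket c=0\rrbracket p$. The character sum is the constant sum $p\,\sigma(c)$.
\end{itemize}
In both subcases the formulas agree with the right sides.

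There is no real obstacle. The only point needing care is the bookkeeping of the Iverson brackets, for instance that $\llbracket a=b=c=0\rrbracket$ collapses to $\llbracket b=c=0\rrbracket$ once $a=0$. One should also check that the $-\llbracket a=0\rrbracket$ term exactly cancels the extra $1$ in $1+\sigma(\Delta)$.
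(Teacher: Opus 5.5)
Your proposal is correct and follows essentially the same route as the paper. When $a\neq 0$ you reduce to \eqref{eq: qen} and \eqref{eq: qjs2}, and when $a=0$ you simplify the Iverson brackets to $\sigma(b^2)+\llbracket b=c=0\rrbracket p$ and $\sigma(c)\llbracket b=0\rrbracket p$, then split on whether $b=0$.
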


\begin{proof}
We check formula \eqref{eq: qen0}. When $a\neq 0$, this is just \eqref{eq: qen}. When $a=0$, \eqref{eq: qen0} simplifies to 
\[\#\{x\in \Fp: bx+c=0\}=\sigma(b^2)+\llbracket b=c=0\rrbracket \cdot p.\] 
For $b\neq 0$, both sides equal $1$. For $b=0$, both sides equal $\llbracket c=0\rrbracket \cdot p$.

Next, we check formula \eqref{eq: qjs0}. The case $a\neq 0$ is \eqref{eq: qjs2}. For $a=0$, \eqref{eq: qjs0} simplifies to 
\[\sum_{x\in \Fp}\sigma(bx+c)=\sigma(c)\llbracket b=0\rrbracket\cdot p.\] 
When $b\neq 0$, this is just \eqref{eq: qjs1}. When $b=0$, both sides equal $\sigma(c)\cdot p$.
\end{proof}

For the matter at hand--the evaluation of the $N_k$ counts--multivariate quadratic character sums come up in the $N_0$ count and the $N_1$ count. The general lemma below makes this precise. In low dimensions, the counts for $N_2$ and higher are usually done more directly. 

\begin{disclem}\label{lem: start!}
Let $f(x_1,\dots,x_n)$ be a symmetric polynomial with coefficients in $\Fp$, having degree $2$ in each variable. Write 
\[f(x_1,\dots,x_n)=\alpha_nx_n^2+\beta_nx_n+\gamma_n,\] 
where $\alpha_n,\beta_n,\gamma_n$ are symmetric polynomials in the remaining variables $x_1,\dots,x_{n-1}$, and put
\begin{align}\label{eq: Deltagen}
\Delta(x_1,\dots,x_{n-1})=\beta_n^2-4\alpha_n\gamma_n.
\end{align}

Then the following hold:
\begin{align}\label{eq:N0gen}
N_0(f)&=p^{n-1}-N_0(\alpha_n)+pN_0(\alpha_n, \beta_n, \gamma_n)+\sum_{x\in \Fp^{n-1}} \sigma(\Delta(x)),
\end{align}
\begin{align}\label{eq:N1gen}
N_1(f)&=N_0(\Delta)+N_0(\alpha_n)-2N_0(\alpha_n,\beta_n)+pN_0(\alpha_n, \beta_n, \gamma_n).
\end{align}
Also
\begin{align}\label{eq:Qgen}
\begin{split}
\sum_{x\in \Fp^n} \sigma(f(x))=&-\sum_{x\in \Fp^{n-1}} \sigma(\alpha_n(x))+p\sum_{x\in \Fp^{n-1}:\: \Delta(x)=0} \sigma(\alpha_n(x))\\
&+p\sum_{x\in \Fp^{n-1}:\: \alpha_n(x)=\beta_n(x)=0} \sigma(\gamma_n(x)).
\end{split}
\end{align}
\end{disclem}

\begin{proof} Recall that $N_0(f)$ counts the number of solutions $(x_1,\dots,x_n)\in \Fp^n$ to the equation $f(x_1,\dots,x_n)=0$. Fix $x=(x_1,\dots,x_{n-1})\in \Fp^{n-1}$. In view of \eqref{eq:N0gen}, the equation $\alpha_n(x)x_n^2+\beta_n(x)x_n+\gamma_n(x)=0$ has
\[1+\sigma(\Delta(x))-\llbracket \alpha_n(x)=0\rrbracket+\llbracket \alpha_n(x)=\beta_n(x)=\gamma_n(x)=0\rrbracket \cdot p\]
solutions $x_n\in \Fp$. Summing over $x\in \Fp^{n-1}$, we obtain \eqref{eq:N0gen}.

Next, $N_1(f)$ counts the number of solutions $(x_1,\dots,x_n)\in \Fp^n$ to the equation $f(x_1,\dots,x_n)=0$ that satisfy $\alpha_n(x_1,\dots,x_{n-1})=0$, or $\alpha_n(x_1,\dots,x_{n-1})\neq 0$ and $2x_n=-\beta_n(x_1,\dots,x_{n-1})/\alpha_n(x_1,\dots,x_{n-1})$. Fix $x=(x_1,\dots,x_{n-1})\in \Fp^{n-1}$. When $\alpha_n(x)=0$, there are $\llbracket \beta_n(x)\neq 0\rrbracket+\llbracket \beta_n(x)=\gamma_n(x)=0\rrbracket\cdot p$ solutions $x_n\in \Fp$ to the equation $f(x_1,\dots,x_n)=0$. When $\alpha_n(x)\neq 0$, there are $\llbracket \Delta(x)=0\rrbracket$ solutions $x_n\in \Fp$ to the equation $f(x_1,\dots,x_n)=0$. Overall, $x\in \Fp^{n-1}$ contributes
\[\llbracket \alpha_n(x)=0\rrbracket \Big(\llbracket \beta_n(x)\neq 0\rrbracket+\llbracket \beta_n(x)=\gamma_n(x)=0\rrbracket\cdot p\Big)+\llbracket \Delta(x)=0\rrbracket\cdot \llbracket \alpha_n(x)\neq 0\rrbracket\]
to the $N_1$ count. Next, we replace $\llbracket \beta_n(x)\neq 0\rrbracket=1-\llbracket \beta_n(x)=0\rrbracket$ and $\llbracket \alpha_n(x)\neq 0\rrbracket=1-\llbracket \alpha_n(x)=0\rrbracket$. We note that
\begin{align*}
\llbracket \Delta(x)=0\rrbracket\cdot \llbracket \alpha_n(x)\neq 0\rrbracket&=\llbracket \Delta(x)=0\rrbracket\Big(1- \llbracket \alpha_n(x)= 0\rrbracket\Big)\\
&=\llbracket \Delta(x)=0\rrbracket-\llbracket \alpha_n(x)=\Delta(x)= 0\rrbracket\\
&=\llbracket \Delta(x)=0\rrbracket-\llbracket \alpha_n(x)=\beta_n(x)= 0\rrbracket.
\end{align*}
Thus, the contribution of each $x\in \Fp^{n-1}$ is
\begin{align*}
\llbracket \Delta(x)=0\rrbracket&+\llbracket \alpha_n(x)=0\rrbracket -2\llbracket \alpha_n(x)=\beta_n(x)=0\rrbracket\\
&+p\llbracket \alpha_n(x)=\beta_n(x)=\gamma_n(x)=0\rrbracket.
\end{align*}
Summing over $x\in \Fp^{n-1}$, we obtain \eqref{eq:N1gen}.

Finally, fix $x=(x_1,\dots,x_{n-1})\in \Fp^{n-1}$. By \eqref{eq: qjs0}, we have that
\begin{align*}
\sum_{x_n\in \Fp} \sigma\big(\alpha_n(x)x_n^2+\beta_n(x)x_n+\gamma_n(x)\big)
\end{align*}
equals
\begin{align*}
-\sigma(\alpha_n(x))+\sigma(\alpha_n(x)) \llbracket \Delta(x)=0\rrbracket \cdot p+\sigma(\gamma_n(x)) \llbracket \alpha_n(x)=\beta_n(x)=0\rrbracket \cdot p.
\end{align*}
Summing over $x\in \Fp^{n-1}$, we obtain \eqref{eq:Qgen}.
\end{proof}

The point of the Discriminant Lemma is the dimension drop: in each one of \eqref{eq:N0gen}, \eqref{eq:N1gen}, and \eqref{eq:Qgen}, the left-hand side is $n$-dimensional whereas the right-hand side is $(n-1)$-dimensional. The relevance of \eqref{eq:N0gen} and \eqref{eq:N1gen} is clear, but that of \eqref{eq:Qgen} is less so. Note that \eqref{eq:N0gen} involves a multidimensional character sum with argument $\Delta(x_1,\dots,x_{n-1})$. If the latter polynomial happens to have degree $2$ in each variable, then we can apply \eqref{eq:Qgen} to $\Delta(x_1,\dots,x_{n-1})$, and we can apply \eqref{eq:N0gen} to compute the $N_0(\Delta)$ term in \eqref{eq:N1gen}.

In several applications of the above Lemma, $\alpha_n$ will be constant. Therefore, the terms which include the zero-set of $\alpha_n$ will vanish; formulas \eqref{eq:N0gen}, \eqref{eq:N1gen}, \eqref{eq:Qgen} will be pleasingly simpler.

\bigskip
\part{Vieta graphs in dimension $3$} 
\section{The generalized Markoff cubic}\label{sec: gm}
Consider the equation
\begin{align*}
\M(C,D): \qquad x^2+y^2+z^2=xyz-C(x+y+z)+D.
\end{align*}
The cubic $\M(C,D)$ may be thought of as a two-parameter deformation of the Markoff cubic \eqref{eq: M} which, in the present notation, is the `origin' $\M(0,0)$.  

The generalized Markoff equation $\M(C,D)$ we are interested in is, up to changing the sign of each variable, the symmetric case of the following equation:
\begin{align}\label{eq: GNS}
x^2+y^2+z^2+xyz=Ax+By+Cz+D.
\end{align}
The real or complex cubic surface defined by \eqref{eq: GNS} has received a lot of attention in the recent literature. It arises as the character variety of a sphere with four punctures \cite[Sec.5.2]{Gold}. The dynamics of the group action generated by the Vieta flips on the real or complex surface \eqref{eq: GNS} were studied in \cite{Can, CL, RR}, among others. In light of all this, the idea of a Markoff-type graph defined by the equation \eqref{eq: GNS} is very natural, and already hinted at in \cite{BGS}. Our viewpoint is that symmetry is not only appealing, but also an essential aspect of the Markov/Markoff landscape. We have therefore settled on $\M(C,D)$ as the equation underlying the `right' generalization of the Markoff graph over $\Fp$.

In fact $\M(C,D)$ accounts for the most general symmetric equation of Markoff type over $\Fp$. Such an equation takes the form
\[x^2+y^2+z^2=C_3xyz+ C_2(xy+yz+zx)+C_1(x+y+z)+C_0\]
where $C_3\neq 0$. However, by uniform scalings ($x:=x/C_3$, $y:=y/C_3$, $z:=z/C_3$) followed by uniform shifts ($x:=x-C_2$, $y:=y-C_2$, $z:=z-C_2$), we may arrange that $C_3=1$ and $C_2=0$. This normalization amounts to our generalized Markoff equation.

As a relevant example, let us normalize equation \eqref{eq: GSLM}. The uniform scalings $x:=x/(3+3K)$ etc., yield the nice equivalent form 
\begin{align}\label{eq: KMar0}
x^2+y^2+z^2+K(xy+yz+zx)=xyz.
\end{align}
Next, the uniform shifts $x:=x+K$ etc., lead to
\begin{align}\label{eq: KMar}
x^2+y^2+z^2=xyz-(K^2+2K)(x+y+z)-(2K^3+3K^2).
\end{align}
This is equation $\M(C,D)$ with $C=K^2+2K$ and $D=-(2K^3+3K^2)$. A minor difference is that the parameter value $K=-1$, which was excluded in \eqref{eq: GSLM}, is now allowed. In fact, it will turn out that the value $K=-1$ plays a distinguished role within the \emph{$K$ family} described by \eqref{eq: KMar}. 

At this point, we make a general remark, valid in any dimension: linear changes of variables--shifts $x_i:=x_i+c_i$, or scalings $x_i:=s_ix_i$ with $s_i\neq 0$--induce obvious graph isomorphisms between the corresponding Vieta graphs. This allows us to freely move between linearly equivalent forms of a polynomial equation $f(x_1,\dots,x_n)=0$, driven by normalizations or the occasional pretty presentation. Incidentally, these two aspects can be quite different: defining the $K$ family by \eqref{eq: KMar0} is aesthetically pleasing; however, the normalization \eqref{eq: KMar} is functionally more convenient for our purposes. By viewing the $K$ family as a generalized Markoff equation of the form $\M(C,D)$ we are gaining not just a more organized conceptual framework, but also a practical benefit--namely, simplified Vieta flips.  

The Vieta graph of $\M(C,D)$ has the solutions $(x,y,z)\in \Fp^3$ to equation $\M(C,D)$ as its vertices; edges connect a solution $(x,y,z)$ to its Vieta flips 
\[\big(yz-C-x,y,z\big), \; \big(x,xz-C-y,z\big), \; \big(x,y,xy-C-z\big),\] 
which are still solutions to equation $\M(C,D)$. While equation $\M(C,D)$ and its corresponding Vieta graph depend on two parameters, it seems reasonable to interpret $C$ as the principal parameter; it is the only one involved in the Vieta flips. The secondary parameter $D$ is a `level' parameter. 

We have already alluded to the point that the search for explicit counting results helps us discern interesting instances of the generalized Markoff equation $\M(C,D)$. A stand-out one-parameter family is the \emph{Cayley family}, wherein for each $C$ we pick the level
\begin{align}\label{eq: D}
D_{\bullet}=4-2C-\frac{C^2}{4}.
\end{align}
An exceptional member of this family is the Cayley cubic $\M(0,4)$; some of this cubic's features are present throughout the family, whence the name. Our analysis will highlight another remarkable member of the Cayley family--the cubic $\M(8,-28)$.

The form-versus-function point we made before comes up again. The normalization of the Cayley family is the not-so-appealing equation
\begin{align}\label{eq: C}
x^2+y^2+z^2=xyz-C(x+y+z)+4-2C-\frac{C^2}{4}.
\end{align}
However, the uniform shifts $x:= x-2$ etc. lead to the pleasant looking
\begin{align}\label{eq: C'}
\Big(x+y+z+\frac{C-8}{2}\Big)^2=xyz.
\end{align} 
Now, \eqref{eq: C'} calls for taking $C=8$, whereupon it becomes
\begin{align}\label{eq: sM}
(x+y+z)^2=xyz.
\end{align} 
Besides its intrinsic appeal, this equivalent form of the $\M(8,-28)$ cubic enjoys an interesting relation to the Markoff cubic. Namely, the squaring map $(x,y,z)\mapsto (x^2,y^2,z^2)$ sends solutions of $x^2+y^2+z^2=xyz$ to solutions of \eqref{eq: sM}.

The $K$ family, discussed above, turns out to be another one-parameter family that is distinguished from a structural viewpoint; see Section~\ref{sec: K}. The parameterization $C=K^2+2K$ and $D=-(2K^3+3K^2)$ includes, in particular, the Markoff cubic $\M(0,0)$ for $K=0$; the Cayley cubic $\M(0,4)$ for $K=-2$; and the remarkable cubic $\M(8,-28)$ for $K=2$. 

We will get explicit degree counts for the entire Cayley family and the entire $K$ family. For other one-parameter families of interest, our goal of obtaining explicit degree counts will be achieved only partially, on certain members of the families. 

Consider the \emph{Fricke family}, defined by $C=0$. The equation $\M(0,D)$ is, of course, exactly \eqref{eq: G0}. In addition to the Markoff cubic $\M(0,0)$ and the Cayley cubic $\M(0,4)$, we will get explicit degree counts for the Fricke cubics $\M(0,-4)$, $\M(0,2)$, and $\M(0,8)$.

A more subtle choice is the \emph{$J$ family}, parameterized by $C=3J^2-4$, $D=9J^3-36J-28$. We will be able to get explicit degree counts for a subfamily of the $J$ family, which includes the more exotic cubic $\M(-8/3,-20/3)$.

The explicit degree counts within the Fricke family and the $J$ family draw from certain quadratic character sums known as Jacobsthal sums. We summarize the essentials of this topic in Section~\ref{sec: Jac}.

\section{The $N_0$ count}
We begin the $N_k$ counts for the polynomial
\begin{align}\label{eq: mainf}
f(x,y,z)=x^2+y^2+z^2-xyz+C(x+y+z)-D.
\end{align}
In this section, we handle the $N_0$ count. The counts for $N_1$, $N_2$, and $N_3$ are the subject of Sections~\ref{sec: N1}, ~\ref{sec: N2}, and~\ref{sec: N3} respectively.

\begin{thm}\label{thm: v}
The following hold:
\begin{itemize}
\item[(i)] if $D=D_\bullet$, then 
\begin{align}\label{eq: N0countC}
N_0(f)=p^2+1+\sigma(C^2-8C) \cdot p;
\end{align}
\item[(ii)] if $D\neq D_\bullet$, then 
\begin{align}\label{eq: N0count}
N_0(f)=p^2+1+\sigma(D-D_\bullet)\big(1+N_0(\kappa)\big) \cdot p
\end{align}
where $N_0(\kappa)$ is the number of distinct roots in $\Fp$ of the cubic 
\begin{align}\label{eq: kappa}
\kappa(x)=x^3+(C-2)x^2-(2C+D)x+C^2+2D.
\end{align}
\end{itemize}
\end{thm}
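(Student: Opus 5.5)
We apply the Discriminant Lemma~\ref{lem: start!} with $n=3$, singling out the variable $z$. Writing $f=z^2+(C-xy)z+(x^2+y^2+Cx+Cy-D)$, we have $\alpha_3=1$, so \eqref{eq:N0gen} collapses to
\[N_0(f)=p^2+\sum_{x,y}\sigma(\Delta(x,y)),\qquad \Delta=(xy-C)^2-4(x^2+y^2+Cx+Cy-D).\]
Next we view $\Delta$ as a quadratic in $y$, with coefficients
\[a(x)=x^2-4,\qquad b(x)=-2C(x+2),\qquad c(x)=C^2-4x^2-4Cx+4D.\]
For each fixed $x$ we evaluate the inner sum over $y$ by \eqref{eq: qjs0}, and then sum over $x$; the three terms of \eqref{eq: qjs0} are handled separately.

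\textbf{Term $-\sigma(a)$.} By \eqref{eq: qjs2}, $-\sum_x\sigma(x^2-4)=1$.

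\textbf{Term with $\llbracket b^2=4ac\rrbracket$.} A direct expansion should give the discriminant
\[b^2-4ac=16(x+2)\kappa(x),\]
with $\kappa$ as in \eqref{eq: kappa}. Since $\sigma(a(-2))=0$, this term contributes
\[p\sum_{\kappa(r)=0}\sigma(r^2-4).\]

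\textbf{Term with $\llbracket a=b=0\rrbracket$.} Here $x=-2$ always qualifies, and $x=2$ qualifies only when $C=0$. In both cases $c(x)=4(D-D_\bullet)$. So this term contributes
\[p\,\sigma(D-D_\bullet)\bigl(1+\llbracket C=0\rrbracket\bigr).\]

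The key algebraic step, and the main thing to verify, is the identity
\[\kappa(x)=(x+2)\Bigl(x+\tfrac{C-4}{2}\Bigr)^2-(x-2)(D-D_\bullet).\]
I expect to obtain it by regarding $\kappa$ as linear in $D$, namely $\kappa=(2-x)D+x^3+(C-2)x^2-2Cx+C^2$, and checking that the numerator of $D-D_\bullet$ at a root factors as $(x+2)(x+(C-4)/2)^2$.

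Case (i), $D=D_\bullet$. The identity gives the roots of $\kappa$ as $-2$ and $r_0=(4-C)/2$. Then $\sigma(r_0^2-4)=\sigma((C^2-8C)/4)=\sigma(C^2-8C)$, and the $\llbracket a=b=0\rrbracket$ term vanishes. Summing the three contributions yields \eqref{eq: N0countC}. The possibly degenerate cases $r_0=\pm2$, i.e.\ $C\in\{0,8\}$, give $\sigma=0$ on both sides, so they are consistent.

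Case (ii), $D\neq D_\bullet$. Now $\kappa(-2)=4(D-D_\bullet)\neq0$. Moreover $r_0$ is a root only if $r_0=2$, and $2$ is a root exactly when $C=0$, since $\kappa(2)=C^2$. For every root $r\neq\pm2,r_0$, the identity gives $D-D_\bullet=(r+2)(r-r_0)^2/(r-2)$, whence $\sigma(r^2-4)=\sigma(D-D_\bullet)$. When $C=0$, the root $r=2$ contributes $0$ to the middle term but is counted in $N_0(\kappa)$. The extra $\llbracket C=0\rrbracket$ part of the last term supplies exactly the missing $p\,\sigma(D-D_\bullet)$. Adding everything gives
\[p^2+1+p\,\sigma(D-D_\bullet)\bigl(N_0(\kappa)+1\bigr),\]
which is \eqref{eq: N0count}. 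The bookkeeping of these coincidences among the roots $\pm2$ and $r_0$ is the delicate part; the rest is routine expansion.
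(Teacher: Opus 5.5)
Your proposal is correct and follows the paper's proof essentially step for step. You reduce via \eqref{eq:N0gen} to $\sum_{x,y}\sigma(\Delta(x,y))$, then evaluate it fiberwise via \eqref{eq: qjs0} (which is \eqref{eq:Qgen}) with discriminant $16(x+2)\kappa(x)$. Finally you use the identity \eqref{eq: kappa2} to get $\sigma(r^2-4)=\sigma(D-D_\bullet)$ at the roots of $\kappa$, with the root $r=2$ when $C=0$ compensated by the extra $\llbracket C=0\rrbracket$ term. The only cosmetic difference is that you organize case (ii) by excluding the roots $\pm2, r_0$, whereas the paper splits into $C\neq 0$ and $C=0$.
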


We note, in particular, that $N_0(f)=p^2+1+ mp$ where $m$ is a small integer--it satisfies $|m|\leq 4$. More precisely $m\in \{0,\pm 1\}$ in case (i), and $m\in \{\pm 1, \pm 2, \pm 3, \pm 4\}$ in case (ii).

\begin{proof}
We apply \eqref{eq:N0gen} to $f(x,y,z)$. Here 
\begin{align*}
\alpha_z(x,y)=1,\quad \beta_z(x,y)=-(xy-C), \quad \gamma_z(x,y)=x^2+y^2+C(x+y)-D,
\end{align*}
and
\begin{align}\label{eq: Delta}
\begin{split}
\Delta(x,y)&=\beta_z(x,y)^2-4\alpha_z(x,y)\gamma_z(x,y)\\
&=(x^2-4)y^2-2C(x+2)y-4x^2-4Cx+C^2+4D.
\end{split}
\end{align}

We get
\begin{align}\label{eq: N0part}
N_0(f)=p^2+\sum_{x,y\in \Fp}\sigma(\Delta(x,y)).
\end{align}

Next, we compute the latter quadratic character sum by applying \eqref{eq:Qgen} to $\Delta(x,y)$. We have
\begin{align*}
\alpha(x)=x^2-4,\qquad \beta(x)=-2C(x+2), \qquad \gamma(x)=-4x^2-4Cx+C^2+4D,
\end{align*}
and
\begin{align}
\Delta(x)&=\beta(x)^2-4\alpha(x)\gamma(x)=16(x+2)\:\kappa(x)
\end{align}
where $\kappa$ is the cubic in \eqref{eq: kappa}. Thus, the bivariate character sum in \eqref{eq: N0part} equals
\begin{align}\label{eq: N0party}
-\sum_{x\in \Fp} \sigma(\alpha(x))+p\sum_{x\in \Fp:\: \Delta(x)=0} \sigma(\alpha(x))+p\sum_{x\in \Fp:\: \alpha(x)=\beta(x)=0} \sigma(\gamma(x)).
\end{align}
The first, complete sum is the simplest among these three univariate character sums: we clearly have
\[\sum_{x\in \Fp} \sigma(\alpha(x))=-1.\] 
Between the two remaining short sums, the second is easier. Note that $\alpha(x)=\beta(x)=0$ has one solution $x=-2$ when $C\neq 0$, respectively two solutions $x=\pm 2$ when $C=0$. We have $\gamma(-2)=-16+8C+C^2+4D=4(D-D_\bullet)$; also $\gamma(2)=4(D-D_\bullet)$ when $C=0$. Therefore
\begin{align*}
\sum_{x\in \Fp:\: \alpha(x)=\beta(x)=0} \sigma(\gamma(x))&=\sigma(\gamma(-2))+\llbracket C=0\rrbracket\sigma(\gamma(2))\\
&=\sigma(D-D_\bullet)\big(1+\llbracket C=0\rrbracket\big).
\end{align*}
Note, in particular, that this short sum vanishes when $D=D_\bullet$.

We turn to the evaluation of the first short sum. We can write
\begin{align*}
\sum_{x\in \Fp:\: \Delta(x)=0} \sigma(\alpha(x))=\sum_{x\in \Fp:\: \kappa(x)=0} \sigma(x^2-4).
\end{align*}
Key to what follows is the following alternate expression for the cubic $\kappa$:
\begin{align}\label{eq: kappa2}
\kappa(x)=(x+2)\left(x-2+\frac{C}{2}\right)^2-(D-D_\bullet)(x-2).
\end{align}

(i) Assume $D=D_\bullet$. Then $\kappa(x)=(x+2)(x-2+C/2)^2$, with roots $-2$ and $2-C/2$. In this case 
\[\sum_{x\in \Fp:\: \kappa(x)=0} \sigma(x^2-4)=\sigma\big((2-C/2)^2-4\big)=\sigma(C^2-8C).\]
Now \eqref{eq: N0party} gives
\begin{align*}
\sum_{x,y\in \Fp}\sigma(\Delta(x,y))=1+\sigma(C^2-8C)\cdot p
\end{align*}
and the $N_0$ count claimed in \eqref{eq: N0countC} follows.

(ii) Assume $D\neq D_\bullet$. Consider first the case $C\neq 0$. If $x_0$ is a root of $\kappa(x)$, then $(x_0+2)(x_0-2+C/2)^2=(D-D_\bullet)(x_0-2)$. Necessarily $x_0\neq 2$ and $x_0\neq 2-C/2$. We infer that $\sigma(x_0+2)=\sigma(D-D_\bullet)\cdot \sigma(x_0-2)$, so $\sigma(x_0^2-4)=\sigma(D-D_\bullet)$ whenever $x_0$ is a root of $\kappa(x)$. Therefore
\[\sum_{x\in \Fp:\: \kappa(x)=0} \sigma(x^2-4)=\sigma(D-D_\bullet)N_0(\kappa) \qquad \textrm{ if } C\neq 0.\]
Next, consider the case $C=0$. Then $\kappa(x)=(x-2)\big(x^2-4-(D-D_\bullet)\big)$. A root $x_0$ of the quadratic factor $x^2-4-(D-D_\bullet)$ satisfies $x_0\neq 2$, so there are $N_0(\kappa)-1$ such roots. Furthermore, $\sigma(x_0^2-4)=\sigma(D-D_\bullet)$ continues to hold. Therefore
\[\sum_{x\in \Fp:\: \kappa(x)=0} \sigma(x^2-4)=\sigma(D-D_\bullet)\big(N_0(\kappa) -1\big)\qquad \textrm{ if } C=0.\]
Summarizing the evaluations involved in \eqref{eq: N0party}, we see that
\begin{align*}
\sum_{x,y\in \Fp}\sigma(\Delta(x,y))=1+\sigma(D-D_\bullet)\big(1+N_0(\kappa)\big) \cdot p
\end{align*}
and the $N_0$ count claimed in \eqref{eq: N0count} follows.
\end{proof}

In \cite{Car3} Carlitz has computed the solution count for the equation 
\begin{align}\label{eq: abcd}
a_1x^2+a_2y^2+a_3z^2=2bxyz+2c_1x+2c_2y+2c_3z+d
\end{align}
where $a_1a_2a_3b\neq 0$. This is, essentially, equation \eqref{eq: GNS} when $a_1=a_2=a_3$ and, more generally, when $\sigma(a_1)=\sigma(a_2)=\sigma(a_3)$. Carlitz's result is evidently more general, but also more complicated on two accounts. Firstly, he has to use a quartic polynomial \cite[eq.(12)]{Car3} in place of our cubic $\kappa$. Secondly, he cannot compute--explicitly, as we did--a term \cite[eq.(18)]{Car3} that corresponds to what we called the first short sum in the above proof. Our explicit computation has been greatly facilitated by the symmetry of $\M(C,D)$. 

Carlitz has obtained solution counting results for closely related equations in other papers as well. The particular case $c_1=c_2=c_3$ of \eqref{eq: abcd} was first dealt with in \cite{Car1}. This covers, in particular, the Fricke cubic $x^2+y^2+z^2=xyz+ D$. The solution count for the Fricke cubic is commonly encountered in the literature; see \cite[Lem.6.4]{GS}, \cite[Prop.2.1]{CIL}, \cite[Prop.2.2]{Cou}, \cite[Ex.2.7]{N}.

In \cite{Car2}, Carlitz has computed the solution count for the symmetric equation $(x+y+z-b)^2=2axyz$. This, too, is a particular case of \eqref{eq: abcd}. In fact, in view of \eqref{eq: C'}, we recognize that this solution count is exactly that for $\M(C,D)$ in what we termed the Cayley case, $D=D_\bullet$.

 To summarize, much of Theorem~\ref{thm: v} could be ascribed to Carlitz. Our contribution is to go a step further in part (ii), eventually obtaining a much more explicit counting result.
 
\begin{rem} We point out an interesting coincidence, for which we do not currently have an explanation. A distinguished set of parameters in the study of the real or complex surface \eqref{eq: GNS}, originating from considerations of mathematical physics, is the so-called Dubrovin--Mazzocco parameters; see \cite{RR}. For our cubic $\M(C,D)$, they correspond to real parameters $C=2(a+2)$, $D=-(a^2+8a+8)$ for $-2<a<2$. We observe that $D=D_{\bullet}$, so the Dubrovin--Mazzocco parameters belong to the Cayley family. Furthermore, and most intriguingly, the real condition $-2<a<2$ can be rephrased as $C(C-8)<0$; the finite field analogue of the latter is $\sigma(C^2-8C)=-1$. Intriguingly, $\sigma(C^2-8C)$ is precisely the quadratic signature which appears in the $N_0$ count \eqref{eq: N0countC}.
\end{rem} 

\section{The $N_1$ count}\label{sec: N1} 
\begin{thm}\label{thm: N1} 
For the polynomial $f$ of \eqref{eq: mainf}, the following hold:
\begin{itemize}
\item[(i)] if $D=D_\bullet$, then
\begin{align}\label{eq: N1countC}
N_1(f)=\big(3+\llbracket C=0\rrbracket \big) p-4+\llbracket C=8\rrbracket.
\end{align}
\item[(ii)] if $D\neq D_\bullet$, then
\begin{align}\label{eq: N1count}
N_1(f)=p-3+\sum_{x\in \Fp}\sigma(\lambda(x))
\end{align}
where $\lambda(x)$ is the cubic
\begin{align}\label{eq: lambda}
\lambda(x)=x^3-\big(D-D_\bullet\big)x^2+\Big(\frac{C-8}{2}\cdot x+4(D-D_\bullet)\Big)^2.
\end{align}
\end{itemize}
\end{thm}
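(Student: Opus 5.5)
Since $\alpha_z=1$ is constant, the plan is to apply \eqref{eq:N1gen} to $f$ in the variable $z$. All terms involving the zero set of $\alpha_z$ vanish, so
\[N_1(f)=N_0(\Delta),\]
where $\Delta(x,y)$ is the bivariate discriminant \eqref{eq: Delta} from the proof of Theorem~\ref{thm: v}. I would then compute $N_0(\Delta)$ by applying \eqref{eq:N0gen} to $\Delta$ as a quadratic in $y$. The data are the same as before: $\alpha(x)=x^2-4$, $\beta(x)=-2C(x+2)$, $\gamma(x)=-4x^2-4Cx+C^2+4D$, with discriminant $16(x+2)\kappa(x)$. This gives
\[N_0(\Delta)=p-N_0(\alpha)+pN_0(\alpha,\beta,\gamma)+\sum_{x\in\Fp}\sigma\big((x+2)\kappa(x)\big).\]
Here $N_0(\alpha)=2$. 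As computed in the proof of Theorem~\ref{thm: v}, the common zeros of $\alpha,\beta$ are $x=-2$, together with $x=2$ when $C=0$, and $\gamma$ equals $4(D-D_\bullet)$ at each of them. Hence $N_0(\alpha,\beta,\gamma)=(1+\llbracket C=0\rrbracket)\llbracket D=D_\bullet\rrbracket$.

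Write $E=D-D_\bullet$ and $c=(C-8)/2$. By \eqref{eq: kappa2},
\[(x+2)\kappa(x)=(x+2)^2(x+2+c)^2-E(x+2)(x-2).\]

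Case (i), $E=0$. The argument is a perfect square, so the character sum equals $p$ minus the number of distinct roots among $-2$ and $2-C/2$. These roots coincide exactly when $C=8$, so the sum is $p-2+\llbracket C=8\rrbracket$. Assembling,
\[N_1(f)=p-2+(1+\llbracket C=0\rrbracket)p+p-2+\llbracket C=8\rrbracket,\]
which is \eqref{eq: N1countC}.

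Case (ii), $E\neq 0$. The extra term $pN_0(\alpha,\beta,\gamma)$ vanishes, so $N_1(f)=p-2+\sum_x\sigma((x+2)\kappa(x))$. The remaining task is to convert this quartic character sum into the cubic sum of $\lambda$. The term $x=-2$ contributes $0$. For $x\neq -2$, I substitute $u=1/(x+2)$ and multiply the argument by the square $u^4$, which yields
\[g(u)=(1+cu)^2-Eu^2+4Eu^3.\]
So the sum equals $\sum_{u\neq 0}\sigma(g(u))$. Next, the rescaling $u=s/(4E)$, combined with multiplying by the square $(4E)^2$, turns $g$ into
\[s^3-Es^2+(cs+4E)^2=\lambda(s).\]
Since $\lambda(0)=16E^2$ is a nonzero square, removing the point $s=0$ gives
\[\sum_{u\neq 0}\sigma(g(u))=\sum_{s\in\Fp}\sigma(\lambda(s))-1.\]
Therefore $N_1(f)=p-3+\sum_s\sigma(\lambda(s))$, as claimed in \eqref{eq: N1count}.

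The main obstacle is this last step: finding the projective change of variable together with the scaling that send the quartic $(x+2)\kappa(x)$ to the cubic $\lambda$. One also has to track the boundary corrections carefully, namely the point $x=-2$ and the point $s=0$.
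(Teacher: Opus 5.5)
Your proof is correct and follows the paper's route. You get $N_1(f)=N_0(\Delta)$ from \eqref{eq:N1gen}, apply \eqref{eq:N0gen} to $\Delta$ with the same values of $N_0(\alpha)$ and $N_0(\alpha,\beta,\gamma)$, and evaluate the Cayley case the same way. In case (ii), your substitution $u=1/(x+2)$ followed by the rescaling $u=s/(4(D-D_\bullet))$ is just the proof of Lemma~\ref{lem: q2c} written out inline (the paper instead shifts $x:=x-2$ and invokes that lemma); starting from the factored form \eqref{eq: kappa2} also lands you directly on $\lambda$ as written in \eqref{eq: lambda}, which spares the paper's short coefficient check.
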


\begin{proof} We keep the notations from the proof of Theorem~\ref{thm: v}. Applying \eqref{eq:N1gen} to $f(x,y,z)$, and keeping in mind that $\alpha_z(x,y)$ never vanishes, we simply get $N_1(f)=N_0(\Delta)$. We compute the latter by applying \eqref{eq:N0gen} to $\Delta(x,y)$. We deduce that
\[N_1(f)=p-N_0(\alpha)+pN_0(\alpha, \beta, \gamma)+\sum_{x\in \Fp}\sigma(\Delta(x)).\]
Clearly, $N_0(\alpha)=2$. Moving on to $N_0(\alpha, \beta, \gamma)$, we distinguish two cases. If $C\neq 0$, then the only common root of $\alpha$ and $\beta$ is $x=-2$; this is a root of $\gamma$ if and only if $D=D_\bullet$. If $C=0$, then the common roots of $\alpha$ and $\beta$ are $x=\pm 2$; these are roots of $\gamma$ if and only if $D=D_\bullet$, once again. Thus
\[N_0(\alpha, \beta, \gamma)=\big(1+\llbracket C=0\rrbracket \big)\llbracket D=D_\bullet\rrbracket.\]
In order to address the univariate character sum, we split the discussion into the easy Cayley case, and the non-Cayley case.

(i) Assume $D=D_\bullet$. Then $\Delta(x)=16(x+2)\kappa(x)=16(x+2)^2(x-2+C/2)^2$, so
\begin{align*}
\sum_{x\in \Fp}\sigma(\Delta(x))&=p-\#\{x\in \Fp: \Delta(x)=0\}=p-\big(2-\llbracket C=8\rrbracket \big).
\end{align*}
Combining our computations, we obtain 
\[N_1(f)=p-2+\big(1+\llbracket C=0\rrbracket \big)p+p-2+\llbracket C=8\rrbracket.\]
This verifies \eqref{eq: N1countC}.

(ii) Assume $D\neq D_\bullet$. As $\Delta(x)=16(x+2)\kappa(x)$, we can write
\begin{align*}
\sum_{x\in \Fp}\sigma(\Delta(x))=\sum_{x\in \Fp}\sigma\big((x+2)\kappa(x)\big)=\sum_{x\in \Fp}\sigma\big(x\kappa(x-2)\big).
\end{align*}
We compute $x\kappa(x-2)=x^4+(C-8)x^3-(6C+D-20)x^2+4(D-D_\bullet)x$.
Next, using the quartic-to-cubic transformation lemma quoted below, we get 
\[\sum_{x\in \Fp}\sigma(\Delta(x))=-1+\sum_{x\in \Fp} \sigma(\lambda(x))\]
where $\lambda(x)$ is the cubic
\begin{align*}
\lambda(x)=x^3-\big(6C+D-20\big)x^2+4(C-8)(D-D_\bullet)x+16(D-D_\bullet)^2.
\end{align*}
A short calculation reveals that this is precisely the cubic spelled out in \eqref{eq: lambda}. In this non-Cayley case, we conclude that
\[N_1(f)=p-3+\sum_{x\in \Fp} \sigma(\lambda(x)),\]
as claimed.
\end{proof}

\begin{lem}\label{lem: q2c}
Let $a_1, a_2, a_3\in \Fp$ with $a_1\neq 0$. Then
\begin{align*}
\sum_{x\in \Fp} \sigma\big(x^4+a_3 x^3+a_2 x^2+a_1x\big)=-1+\sum_{x\in \Fp} \sigma\big(x^3+a_2 x^2+a_1a_3 x+a_1^2\big).
\end{align*}
\end{lem}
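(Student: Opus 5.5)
The plan is to use the substitution $x\mapsto 1/x$ to turn the quartic into a cubic, and then clean up with a shift. Write $Q(x)=x^4+a_3x^3+a_2x^2+a_1x=x\,(x^3+a_3x^2+a_2x+a_1)$. The term $x=0$ contributes $\sigma(0)=0$, so only $x\in\Fp^*$ matters.

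For $x\neq 0$, substitute $x=1/t$ with $t$ running over $\Fp^*$. Then
\[Q(1/t)=t^{-4}\big(1+a_3t+a_2t^2+a_1t^3\big).\]
Since $\sigma(t^{-4})=1$, we get
\[\sum_{x\in\Fp}\sigma(Q(x))=\sum_{t\in\Fp^*}\sigma\big(a_1t^3+a_2t^2+a_3t+1\big).\]
Now put $t=s/a_1$, which is allowed because $a_1\neq 0$. This gives
\[a_1t^3+a_2t^2+a_3t+1=a_1^{-2}\big(s^3+a_2s^2+a_1a_3s+a_1^2\big).\]
The factor $a_1^{-2}$ is a nonzero square, so $\sigma$ ignores it. As $t$ runs over $\Fp^*$, so does $s$.

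Finally, extend the sum to all $s\in\Fp$. The missing term $s=0$ has value $\sigma(a_1^2)=1$, so it must be subtracted. This yields
\[\sum_{x\in\Fp}\sigma(Q(x))=-1+\sum_{s\in\Fp}\sigma\big(s^3+a_2s^2+a_1a_3s+a_1^2\big),\]
which is the claimed identity.

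The only step needing care is the scaling coefficient. Multiplying out, $a_1(s/a_1)^3=s^3/a_1^2$, $a_2(s/a_1)^2=a_2s^2/a_1^2$, and $a_3s/a_1=a_1a_3s/a_1^2$, while the constant is $1=a_1^2/a_1^2$. This confirms the coefficients $a_2$, $a_1a_3$, $a_1^2$, and I expect no real obstacle.
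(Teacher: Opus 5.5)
Your proposal is correct and is essentially the paper's proof: invert $x\mapsto 1/x$ on $\Fp^*$, absorb the fourth power, rescale by $a_1$, and account for the one extra term at $0$. The only cosmetic difference is the order of the last two steps: the paper extends the sum to $0$ before rescaling, so the subtracted term is $\sigma(1)$, whereas you rescale first and then subtract $\sigma(a_1^2)=1$.
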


\begin{proof}
Put $P(x)=x^4+a_3 x^3+a_2 x^2+a_1x$. Then
\begin{align*}
\sum_{x\in \Fp} \sigma(P(x))=\sum_{x\in \Fp^*} \sigma(P(x))=\sum_{x\in \Fp^*} \sigma(P(x^{-1}))=\sum_{x\in \Fp^*} \sigma\big(x^4P(x^{-1})\big).
\end{align*}
Now $x^4P(x^{-1})=1+a_3x+a_2x^2+a_1x^3$, so
\begin{align*}
\sum_{x\in \Fp} \sigma(P(x))&=\sum_{x\in \Fp^*} \sigma\big(a_1x^3+a_2x^2+a_3x+1\big)\\
&=-1+\sum_{x\in \Fp} \sigma\big(a_1x^3+a_2x^2+a_3x+1\big).
\end{align*}
In the latter sum, we make the change of variable $x:=x/a_1$ and we clear the denominator. We  obtain
\[
\sum_{x\in \Fp} \sigma(P(x))=-1+\sum_{x\in \Fp} \sigma\big(x^3+a_2 x^2+a_1a_3 x+a_1^2\big),
\]
as desired.
\end{proof}

In general, one expects the Vieta graph of an equation to be non-regular as soon as $p$ is large enough. As an application of Theorem~\ref{thm: N1}, we determine the finitely many exceptions in the case of $\M(C,D)$.

\begin{cor}
The Vieta graph of $\M(C,D)$ is not regular, except for $\M(0,2)$ over $\F_{\!5}$ and $\M(0,5)$ over $\F_{\!7}$.
\end{cor}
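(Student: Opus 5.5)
By Corollary~\ref{cor: reg}, the Vieta graph of $\M(C,D)$ is regular exactly when $N_1(f)=0$. The plan is therefore to show, using Theorem~\ref{thm: N1}, that $N_1(f)>0$ except in the two listed cases. We split according to whether $D=D_\bullet$.

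In the Cayley case $D=D_\bullet$, formula \eqref{eq: N1countC} gives $N_1(f)\geq 3p-4>0$ for every $p\geq 5$. So the Cayley family never produces a regular graph.

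In the non-Cayley case $D\neq D_\bullet$, formula \eqref{eq: N1count} gives $N_1(f)=p-3+S$, where $S=\sum_{x}\sigma(\lambda(x))$. First we check that the cubic $\lambda$ is never, up to a constant, a perfect square times a linear factor in a way that would make $S$ large. Write $E=D-D_\bullet\neq 0$. Then $\lambda(0)=16E^2\neq 0$, so $x=0$ is not a root; moreover $\lambda$ is monic of degree $3$, hence not a square.

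If $\lambda$ is squarefree, then $y^2=\lambda(x)$ is an elliptic curve, and Hasse's bound gives $|S|\leq 2\sqrt p$. Hence $N_1(f)\geq p-3-2\sqrt p$, which is positive for $p\geq 11$ (indeed $11-3-2\sqrt{11}\approx 1.37>0$). If $\lambda$ has a repeated root, say $\lambda=(x-a)^2(x-b)$, then a direct computation gives
\[
S=\sum_{x\neq a}\sigma(x-b)=-\sigma(a-b),
\]
so $|S|\leq 1$ and $N_1(f)\geq p-4>0$. (If $\lambda=(x-a)^3$, then $S=0$.) Alternatively, one could avoid Hasse by bounding $N_1$ from below through $N_1(f)=N_0(\Delta)$, but the Hasse route is cleaner.

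It remains to handle $p=5$ and $p=7$, which are the real content. For these primes we need $S=3-p$, that is, $S=-2$ when $p=5$ and $S=-4$ when $p=7$. Since $|S|\leq 2\sqrt p$, this is possible only near the extremes of the bound. The plan is a finite enumeration over all pairs $(C,D)\in\Fp^2$ with $D\neq D_\bullet$:
\begin{itemize}
\item compute $\lambda$ from \eqref{eq: lambda} and then $S$;
\item equivalently, and as a cross-check, count $N_1(f)=N_0(\Delta)$ directly, i.e.\ the points $(x,y)$ with $\Delta(x,y)=0$ from \eqref{eq: Delta}.
\end{itemize}
This enumeration is the main obstacle. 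It is routine but tedious: $25$ and $49$ parameter pairs respectively. To lighten it, I would first rewrite the condition using $\Delta(x)=16(x+2)\kappa(x)$ together with the displayed expression of $N_1(f)$ before the quartic-to-cubic reduction. This yields
\[
\sum_{x}\sigma\big((x+2)\kappa(x)\big)=2-p,
\]
which forces $\sigma((x+2)\kappa(x))=-1$ for all but at most two values of $x$. That is a very rigid condition and prunes most cases quickly. I expect the search to leave exactly $(C,D)=(0,2)$ for $p=5$ and $(C,D)=(0,5)$ for $p=7$; for these two pairs, a direct check confirms $N_1(f)=0$.
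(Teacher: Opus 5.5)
Your reduction agrees with the paper's. Corollary~\ref{cor: reg} turns regularity into $N_1(f)=0$, the Cayley case is excluded by $N_1(f)\geq 3p-4$, and in the non-Cayley case the Hasse bound leaves only $p=5,7$. Your separate treatment of a non-separable $\lambda$, where $|S|\leq 1$, is a point the paper does not spell out. The gap is in the final step, which is where the actual content of the corollary lies. You propose an enumeration over all $(C,D)$ but do not carry it out, and you end with ``I expect the search to leave exactly\dots''. As written, nothing rules out other pairs (for instance with $C\neq 0$) having $N_1(f)=0$, and you have not verified that the two listed cubics have $N_1(f)=0$. So the list of exceptions is conjectured rather than proved.

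The missing idea, which is how the paper closes the argument, is that the condition forces $C=0$ without any search. Your own reformulation already shows this. Since $\sum_{x}\sigma\big((x+2)\kappa(x)\big)=2-p$ and the term at $x=-2$ vanishes, the remaining $p-1$ terms must be exactly one $0$ and $p-2$ copies of $-1$, with no $+1$. But $\kappa(2)=C^2$ by \eqref{eq: kappa2}, so the term at $x=2$ is $\sigma(4C^2)=1$ whenever $C\neq 0$, a contradiction. The paper makes the same point with the main cubic: writing $d=D-D_\bullet$, both $\lambda(0)=(4d)^2$ and $\lambda(d)=(Cd/2)^2$ are nonzero squares when $C\neq 0$.

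Once $C=0$, you have $D_\bullet=4$ and $(x+2)\kappa(x)=(x^2-4)(x^2-D)$. The requirement becomes $\sigma\big((x^2-4)(x^2-D)\big)=-1$ for every $x\neq\pm 2$; at $x=0$ this says $\sigma(D)=-1$. This is a one-parameter check in $D$. It yields only $D=2$ for $p=5$ and only $D=5$ for $p=7$, and at the same time confirms that these two cubics do have $N_1(f)=0$.
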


\begin{figure}[ht]
    \centering
    \includegraphics[width=1.0\textwidth]{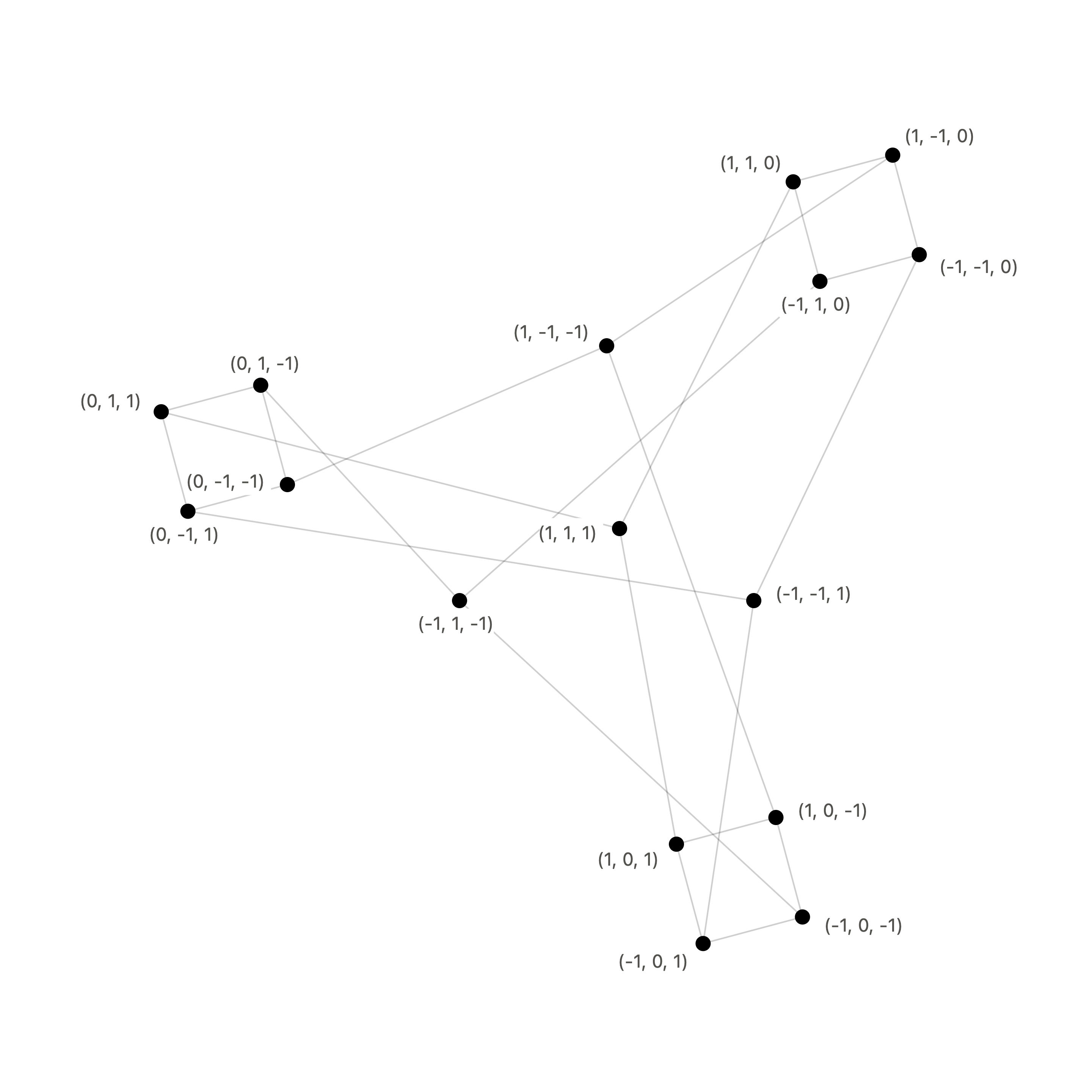}
    \vspace{-1cm}\caption{The Vieta graph of the cubic $\M(0,2)$ over $\F_{\!5}$, a connected $3$-regular graph on $16$ vertices.}
    \label{fig: C0_D2}
\end{figure}

\begin{proof}
By Corollary~\ref{cor: reg}, the Vieta graph of $\M(C,D)$ is regular if and only if $N_1(f)=0$. If $D=D_\bullet$ then $N_1(f)\geq 3p-4$. Thus, $N_1(f)=0$ if and only if $D\neq D_\bullet$ and 
\begin{align}\label{eq: smalllambda}
\sum_{x\in \Fp} \sigma(\lambda(x))=-(p-3).
\end{align}
We will determine that \eqref{eq: smalllambda} can only hold in very exceptional circumstances.

The first observation is that $p$ has to be very small. Indeed, the Hasse--Weil bound
\begin{align}\label{eq: HW}
\Big|\sum_{x\in \Fp}\sigma(\lambda(x))\Big|\leq 2\sqrt{p}
\end{align}
implies that $p-3\leq 2\sqrt{p}$; equivalently, $p\leq 9$. Thus $p=5$ or $p=7$.

Next we put $d:=D-D_\bullet$, so $d\neq 0$, and we write
\begin{align*}
\lambda(x)=x^3-dx^2+\Big(\frac{C-8}{2}\cdot x+4d\Big)^2.
\end{align*}
The second observation is that $\lambda(0)=(4d)^2$ and $\lambda(d)=(Cd/2)^2$, whence $\sigma(\lambda(0))=1$ and $\sigma(\lambda(d))=\sigma(C^2)$. If $C\neq 0$, then $\sigma(C^2)=1$ and \eqref{eq: smalllambda} becomes 
\[\sum_{x\neq 0,d} \sigma(\lambda(x))=-(p-1).\]
This, however, is impossible since the lowest value that the left-hand sum can achieve is $-(p-2)$. The upshot is that $C=0$. 

Now $\lambda(x)=x^3-dx^2+16(x-d)^2$. The change of variable $x:=dx$ gives
\[\sum_{x\in \Fp} \sigma(\lambda(x))=\sum_{x\in \Fp}\sigma\big(d(x^3-x^2)+16(x-1)^2\big),\]
and \eqref{eq: smalllambda} leads to
\[\sum_{x\neq 0,1}\sigma\big(d(x^3-x^2)+16(x-1)^2\big)=-(p-2).\]
This means that the cubic $d(x^3-x^2)+16(x-1)^2$ evaluates to a non-square in $\Fp^*$ whenever $x\neq 0,1$. When $p=5$ we find $d=3$, and when $p=7$ we find $d=1$. Recalling that $d=D-D_\bullet$ and $D_\bullet=4$, as $C=0$, this means that $D=2$ when $p=5$, respectively $D=5$ when $p=7$. \end{proof}

\section{The $N_2$ count}\label{sec: N2} 
The counts for $N_2$ and $N_3$, obtained in this section respectively in Section~\ref{sec: N3}, are direct. 

\begin{thm}\label{thm: N2} For the polynomial $f$ of \eqref{eq: mainf}, the following hold:
\begin{itemize}
\item[(i)] if $D=D_\bullet$, then
\[N_2(f)=\big(1+\llbracket C=0\rrbracket\big)p+1-\llbracket C=0\rrbracket-\llbracket C=8\rrbracket;\]
\item[(ii)] if $D\neq D_\bullet$, then
\[N_2(f)=N_0(\kappa)-\llbracket C=0\rrbracket\]
\end{itemize}
where $\kappa(x)$ is the cubic in \eqref{eq: kappa}.
\end{thm}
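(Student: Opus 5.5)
The plan is a direct computation. By symmetry, $N_2(f)$ counts the solutions of $f=0$ that are fixed by the $y$- and $z$-flips. Since the equation is regular, this means I must count the points of
\[
f=0,\qquad \partial_y f=2y-xz+C=0,\qquad \partial_z f=2z-xy+C=0 .
\]
Subtracting the two derivative equations gives $(z-y)(x+2)=0$. So the locus splits into a branch $A=\{x=-2\}$ and a branch $B=\{y=z\}$. I will count each branch and then subtract the overlap $A\cap B$.

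\emph{Branch $A$.} With $x=-2$, both derivative conditions reduce to $y+z=-C/2$. Substituting into $f$ and using $y^2+z^2=(y+z)^2-2yz$, the $yz$ terms cancel against $-xyz=2yz$. I expect to get $f=D_\bullet-D$, a constant. So branch $A$ contributes $p$ points (with $y$ free) when $D=D_\bullet$, and nothing otherwise.

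\emph{Branch $B$.} With $y=z$, the single remaining derivative condition is $y(x-2)=C$. Using it to eliminate $y^2(x-2)=Cy$, the equation $f=0$ becomes $x^2+Cx+Cy-D=0$.
\begin{itemize}
\item If $C\neq 0$, then $x\neq 2$ and $y=C/(x-2)$. Clearing denominators should give exactly $\kappa(x)=0$, with $\kappa$ the cubic \eqref{eq: kappa}. Since $\kappa(2)=C^2\neq 0$, every root of $\kappa$ is admissible, so branch $B$ contributes $N_0(\kappa)$ points.
\item If $C=0$, then either $x=2$ with $y$ free, which forces $D=4=D_\bullet$ and contributes $p\llbracket D=D_\bullet\rrbracket$ points, or $y=0$ and $x^2=D$. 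Here $\kappa(x)=(x-2)(x^2-D)$, and I will express this count in terms of $N_0(\kappa)$, taking care with the possible coincidence of the root $2$ with a root of $x^2-D$.
\end{itemize}

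\emph{Overlap $A\cap B$.} This is $x=-2$, $y=z=-C/4$, and it lies on $f=0$ exactly when $D=D_\bullet$. So the overlap is one point when $D=D_\bullet$ and none otherwise.

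\emph{Assembling.} For $D\ne D_\bullet$, branch $A$ and the overlap are empty. With $C\neq 0$ this gives $N_0(\kappa)$ directly. With $C=0$ the count is $1+\sigma(D)=N_0(\kappa)-1$, because $2$ is then a root of $\kappa$ distinct from $\pm\sqrt D$ (as $D\ne4$). This yields (ii).

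For $D=D_\bullet$, I use the factorization $\kappa(x)=(x+2)(x-2+C/2)^2$ from \eqref{eq: kappa2}. Its roots are $-2$ and $2-C/2$, which coincide exactly when $C=8$, so $N_0(\kappa)=2-\llbracket C=8\rrbracket$ for $C\ne0$. Then for $C\neq0$,
\[
N_2(f)=p+2-\llbracket C=8\rrbracket-1 .
\]
For $C=0$ the two branches give $p+p+2$ (from $y=0$, $x=\pm2$), and I subtract the overlap together with the double-counted point $(2,0,0)$.

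The main obstacle is bookkeeping rather than algebra: in the $C=0$ Cayley case, several coincident points (the overlap point and the points shared between the sub-branches of $B$) must each be removed exactly once so as to land on $2p-1$. I will list these few points explicitly rather than rely on a general inclusion–exclusion formula.
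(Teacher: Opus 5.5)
Your proposal is correct and is essentially the paper's proof. The paper eliminates $z=(xy-C)/2$ to get $(x+2)\big((x-2)y-C\big)=0$ together with $\Delta(x,y)=0$, then partitions by $x=-2$, $x=2$, $x\neq\pm2$, counting only roots of $\kappa$ other than $\pm2$; this is your $\{x=-2\}\cup\{y=z\}$ split, organized as a disjoint partition rather than by inclusion--exclusion over a cover.

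One slip to fix: for $C=0$, $D=4$ the target is $2p$, not $2p-1$. Your own tally $p+(p+2)-1-1$ already gives $2p$, since the solutions are the two disjoint lines $(-2,y,-y)$ and $(2,y,y)$ of $p$ points each, so no further point should be removed.
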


\begin{proof}
We recall that $N_2(f)$ counts the number of solutions $(x,y,z)$ to the system $f=0$, $\partial_y f=0$, $\partial_z f=0$; in other words, the solutions to $f(x,y,z)=0$ that satisfy $2z=xy-C$ and $2y=xz-C$. We have $z=(xy-C)/2$, and our system reduces to
\[
\begin{cases}
(x^2-4)y=C(x+2),\\
\Delta(x,y)=0,
\end{cases}
\]
where $\Delta(x,y)$ is the discriminant given by \eqref{eq: Delta}. 

Consider solutions with $x=-2$, in which case the system reduces to $\Delta(-2,y)=0$. But $\Delta(-2,y)=-16+8c+C^2+4D=4(D-D_\bullet)$. Thus, there are no solutions when $D\neq D_\bullet$, and $p$ solutions of the form $(-2,y, -y-C/2)$ when $D=D_\bullet$.

Consider solutions with $x=2$. The system becomes $4C=0$, $\Delta(2,y)=0$. There are no solutions when $C\neq 0$. When $C=0$, we have $D_\bullet=4$ and $\Delta(2,y)=-16+4D=4(D-D_\bullet)$. So there are no solutions when $D\neq D_\bullet$, and $\llbracket C=0\rrbracket p$ solutions of the form $(2,y,y-C/2)$ when $D=D_\bullet$.

Finally, consider solutions with $x\neq \pm 2$. Then $y=C/(x-2)$, and $z=C/(x-2)$ as well. The companion equation $\Delta(x,y)=0$ turns into $\kappa(x)=0$. We thus have solutions of the form $(x_0,C/(x_0-2), C/(x_0-2))$ where $x_0\neq \pm 2$ is a root of the cubic $\kappa(x)$. We now distinguish two subcases, and we refer to the root analysis for $\kappa$ that we performed in the proof of Theorem~\ref{thm: v}. If $D=D_\bullet$, then $\kappa(x)=(x+2)(x-2+C/2)^2$. The root $x_0=2-C/2$ is different from $\pm 2$ if and only if $C\neq 0,8$, in which case the solution is $(2-C/2, -2, -2)$. If $D\neq D_\bullet$, then the number of roots of the cubic $\kappa(x)$, which are different from $\pm 2$, is $N_0(\kappa)$ when $C\neq 0$, respectively $N_0(\kappa)-1$ when $C=0$. 

Let us summarize our findings.

(i) If $D=D_\bullet$, then there are $p$ solutions of the form $(-2,y,-y-C/2)$; $\llbracket C=0\rrbracket p$ solutions of the form $(2,y,y-C/2)$; and $1-\llbracket C=0\rrbracket-\llbracket C=8\rrbracket$ solutions of the form $(2-C/2, -2, -2)$. Overall, $N_2(f)=\big(1+\llbracket C=0\rrbracket\big)p+1-\llbracket C=0\rrbracket-\llbracket C=8\rrbracket$.

(ii) If $D\neq D_\bullet$, then there are $N_0(\kappa)-\llbracket C=0\rrbracket=N_2(f)$ solutions of the form $(x_0,C/(x_0-2), C/(x_0-2))$ with $x_0\neq \pm 2$. 
\end{proof}

In particular, we can give necessary and sufficient conditions for the absence of isolated and leaf vertices in our Vieta graphs.  

\begin{cor}\label{cor: no01}
The Vieta graph of $\M(C,D)$ has no vertices of degree $0$ or $1$ if and only if one of the following occurs:
\begin{itemize}
\item[(i)] $C=0$ and $\sigma(D)=-1$,
\item[(ii)] $C\neq 0$, $D\neq D_\bullet$, and the cubic $\kappa(x)$ has no roots in $\Fp$.
\end{itemize}
\end{cor}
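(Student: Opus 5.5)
By Corollary~\ref{cor: reg} with $n=3$ and $d=2$, the Vieta graph of $\M(C,D)$ has no vertices of degree $0$ or $1$ if and only if $N_{3-2+1}(f)=N_2(f)=0$. The plan is therefore to read off from Theorem~\ref{thm: N2} exactly when $N_2(f)$ vanishes, splitting into the Cayley case $D=D_\bullet$ and the non-Cayley case $D\neq D_\bullet$.

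\emph{Cayley case.} If $D=D_\bullet$, Theorem~\ref{thm: N2}(i) gives
\[N_2(f)=\big(1+\llbracket C=0\rrbracket\big)p+1-\llbracket C=0\rrbracket-\llbracket C=8\rrbracket\geq p-1>0.\]
So the Cayley case never qualifies. This agrees with the statement: condition (ii) excludes $D=D_\bullet$ explicitly, and condition (i) excludes it implicitly. Indeed, when $C=0$ we have $D_\bullet=4$, so $D=D_\bullet$ would force $\sigma(D)=\sigma(4)=1$.

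\emph{Non-Cayley case.} If $D\neq D_\bullet$, Theorem~\ref{thm: N2}(ii) gives $N_2(f)=N_0(\kappa)-\llbracket C=0\rrbracket$.
\begin{itemize}
\item For $C\neq 0$, this vanishes exactly when $\kappa$ has no roots in $\Fp$, which is condition (ii).
\item For $C=0$, the proof of Theorem~\ref{thm: v} gives the factorization $\kappa(x)=(x-2)\big(x^2-4-(D-D_\bullet)\big)=(x-2)(x^2-D)$. Hence $N_2(f)=0$ exactly when the quadratic $x^2-D$ has no roots in $\Fp$ other than $2$.
\end{itemize}

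The main point needing care is this last subcase, specifically the possible coincidence of the root $2$ with a root of $x^2-D$. That happens only when $D=4=D_\bullet$, which is excluded here. So for $C=0$ and $D\neq 4$, the condition $N_2(f)=0$ is equivalent to $x^2=D$ having no solutions in $\Fp$, i.e.\ $\sigma(D)=-1$. Note that $D=0$ also gives a root, namely $x=0$, consistent with $\sigma(0)=0\neq -1$.

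Combining the two cases: for $C=0$ the condition is $\sigma(D)=-1$, and since this automatically forces $D\neq 4$, it covers the Cayley case as well. This gives (i). For $C\neq 0$ the condition is $D\neq D_\bullet$ together with $N_0(\kappa)=0$, which is (ii).
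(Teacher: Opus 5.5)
Your proposal is correct and takes essentially the same route as the paper. You reduce via Corollary~\ref{cor: reg} to $N_2(f)=0$, rule out the Cayley case with Theorem~\ref{thm: N2}(i), and in the non-Cayley case use $N_2(f)=N_0(\kappa)-\llbracket C=0\rrbracket$ together with the factorization $\kappa(x)=(x-2)(x^2-D)$ when $C=0$. Your Cayley-case lower bound $p-1$ is slightly weaker than the paper's $N_2(f)\geq p$, because $C=0$ and $C=8$ cannot hold simultaneously, but either bound suffices.
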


\begin{proof} By Corollary~\ref{cor: reg}, a $3$-dimensional Vieta graph has no vertices of degree $0$ or $1$, if and only if $N_2=0$. Let us analyze how this arises by using Theorem~\ref{thm: N2}.

If $D=D_\bullet$ then $N_2(f)\geq p$. Thus, $N_2(f)=0$ if and only if $D\neq D_\bullet$ and $N_0(\kappa)=\llbracket C=0\rrbracket$. Let us make the latter condition more explicit. In the case $C\neq 0$, it amounts to $N_0(\kappa)=0$; we get (ii) as one possibility. In the case $C=0$, it amounts to $N_0(\kappa)=1$. Now $\kappa(x)=(x-2)(x^2-D)$, and $D\neq 4$ since $D_\bullet=4$. So $N_0(\kappa)=1$ if and only if $\sigma(D)=-1$, a condition which also subsumes the requirement that $D\neq 4$. We conclude that (i) is the other possibility.
\end{proof}

Our counts thus far depend on two univariate cubics: $\kappa$, defined by \eqref{eq: kappa}, and $\lambda$, defined by \eqref{eq: lambda}. The next lemma achieves the desirable goal of linking the two cubics. 

\begin{lem}\label{lem: 67}
If $D\neq D_\bullet$ then $N_0(\kappa)=N_0(\lambda)$.
\end{lem}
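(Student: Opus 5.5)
The plan is to show that $\lambda$ is, up to a nonzero factor, the reversal of $\kappa$ under a Möbius substitution $x\mapsto 4(D-D_\bullet)/x$ after a shift. That substitution gives an explicit bijection between the roots of the two cubics in $\Fp$.

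Put $d:=D-D_\bullet\neq 0$ and $c:=(C-8)/2$. I would start from the alternate expression \eqref{eq: kappa2} for $\kappa$ and shift by $u=x+2$. Since $x-2+C/2=u+c$ and $x-2=u-4$, this gives
\[
\tilde\kappa(u):=\kappa(u-2)=u(u+c)^2-d(u-4).
\]
The shift is a bijection of $\Fp$, so $N_0(\kappa)=N_0(\tilde\kappa)$. On the other side, \eqref{eq: lambda} reads $\lambda(x)=x^3-dx^2+(cx+4d)^2$.

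The key step is the substitution $w=4d/x$ for $x\neq 0$. Dividing by $x^2$,
\[
\frac{\lambda(x)}{x^2}=x-d+\Big(c+\frac{4d}{x}\Big)^2=\frac{4d}{w}-d+(w+c)^2 .
\]
Multiplying by $w$ then gives $w(w+c)^2-d(w-4)=\tilde\kappa(w)$. Hence we get the identity
\[
\lambda(x)=\frac{x^3}{4d}\,\tilde\kappa\Big(\frac{4d}{x}\Big)\qquad (x\neq 0).
\]
I would confirm this by clearing denominators and comparing coefficients, which is a routine polynomial check.

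The bijection then follows. The map $x\mapsto 4d/x$ is a bijection of $\Fp^*$, so it sends roots of $\lambda$ in $\Fp^*$ bijectively to roots of $\tilde\kappa$ in $\Fp^*$. It remains to exclude zero as a root on either side, and this is exactly where $d\neq 0$ is used: $\lambda(0)=16d^2\neq 0$ and $\tilde\kappa(0)=4d\neq 0$. Therefore $N_0(\lambda)=N_0(\tilde\kappa)=N_0(\kappa)$, counting distinct roots.

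The only real obstacle is finding the right substitution. Once $\kappa$ is written in the shifted form $u(u+c)^2-d(u-4)$, the shape of $\lambda$ suggests inverting with scale $4d$, and everything else is mechanical.
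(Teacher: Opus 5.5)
Your proof is correct and is essentially the paper's argument. The paper also shifts $\kappa$ by $2$, passes to the reciprocal polynomial, and rescales by $4(D-D_\bullet)$; these steps compose to your single substitution $x\mapsto 4d/x$ and your identity $\lambda(x)=\frac{x^3}{4d}\,\tilde\kappa(4d/x)$, with your check that $\tilde\kappa(0)=4d\neq 0$ and $\lambda(0)=16d^2\neq 0$ matching the paper's observation that $0$ is not a root.
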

\begin{proof}
We chain several transformations that turn $\kappa$ into $\lambda$, while keeping the number of distinct roots unchanged. We start with the formula \eqref{eq: kappa2}
\[
\kappa(x)=(x+2)\left(x-2+\frac{C}{2}\right)^2-(D-D_\bullet)(x-2).
\]
Consider the shifted cubic $\kappa_1(x)=\kappa(x-2)$. Then
\[
\kappa_1(x)=x\left(x+\frac{C-8}{2}\right)^2-(D-D_\bullet)(x-4).
\]
Notice that $x=0$ is not a root of $\kappa_1(x)$. We may thus pass from $\kappa_1$ to its reciprocal $\kappa_2$, given by $\kappa_2(x)=x^3\kappa_1(1/x)$, without changing the root count. We have
\[
\kappa_2(x)=\left(1+\frac{C-8}{2}\cdot x\right)^2-(D-D_\bullet)(x^2-4x^3).
\]
Finally, by using the rescaling
\[
\big(4(D-D_\bullet)\big)^2\kappa_2\left(\frac{x}{4(D-D_\bullet)}\right)=\Big(4(D-D_\bullet)+\frac{C-8}{2}\cdot x\Big)^2-(D-D_\bullet)x^2+x^3
\]
we arrive at $\lambda$. \end{proof}

Moving forward, $\lambda$ could be thought of as the \emph{main cubic}.

\section{An interlude on the $K$ family}\label{sec: K}
Let us take a closer look at the $K$ family, parameterized by 
\begin{align}\label{eq: CDK}
C=K^2+2K, \qquad D=-(2K^3+3K^2).
\end{align}
In particular, we highlight four distinguished $K$ cubics: for $K=0$ we get the Markoff cubic $\M(0,0)$; for $K=-2$ we get the Cayley cubic $\M(0,4)$; for $K=2$ we get the cubic
\begin{align*}
\M(8,-28): \qquad x^2+y^2+z^2=xyz-8(x+y+z)-28;
\end{align*}
and for $K=-1$ we get the cubic
\begin{align*}
\M(-1,-1): \qquad x^2+y^2+z^2=xyz+(x+y+z)-1.
\end{align*}
The latter parameter value, $K=-1$, represents the cusp point of the parameterization \eqref{eq: CDK}. 

The first three $K$ cubics--two familiar, one somewhat unexpected--arise by intersecting the $K$ family with two other outstanding one-parameter families. The $K$ family meets the Fricke family, described by $C=0$, for the parameters values $K=0$ and $K=-2$; these represent the Markoff cubic and the Cayley cubic. Next, let us see how the $K$ family meets the Cayley family, described by $D=D_\bullet$. Note that the level parameter corresponding to $C=K^2+2K$ is $D_\bullet=4-2C-C^2/4=4-4K-3K^2-K^3-K^4/4$; we then compute
\begin{align}\label{KCay}
4(D-D_\bullet)=(K+2)(K-2)^3.
\end{align}
In particular, the $K$ family meets the Cayley family for the parameters values $K=-2$ and $K=2$. The former value represents the Cayley cubic, once again; the latter value brings forth the $\M(8,-28)$ cubic.

\begin{figure}[ht]
    \centering
    \includegraphics[width=0.95\textwidth]{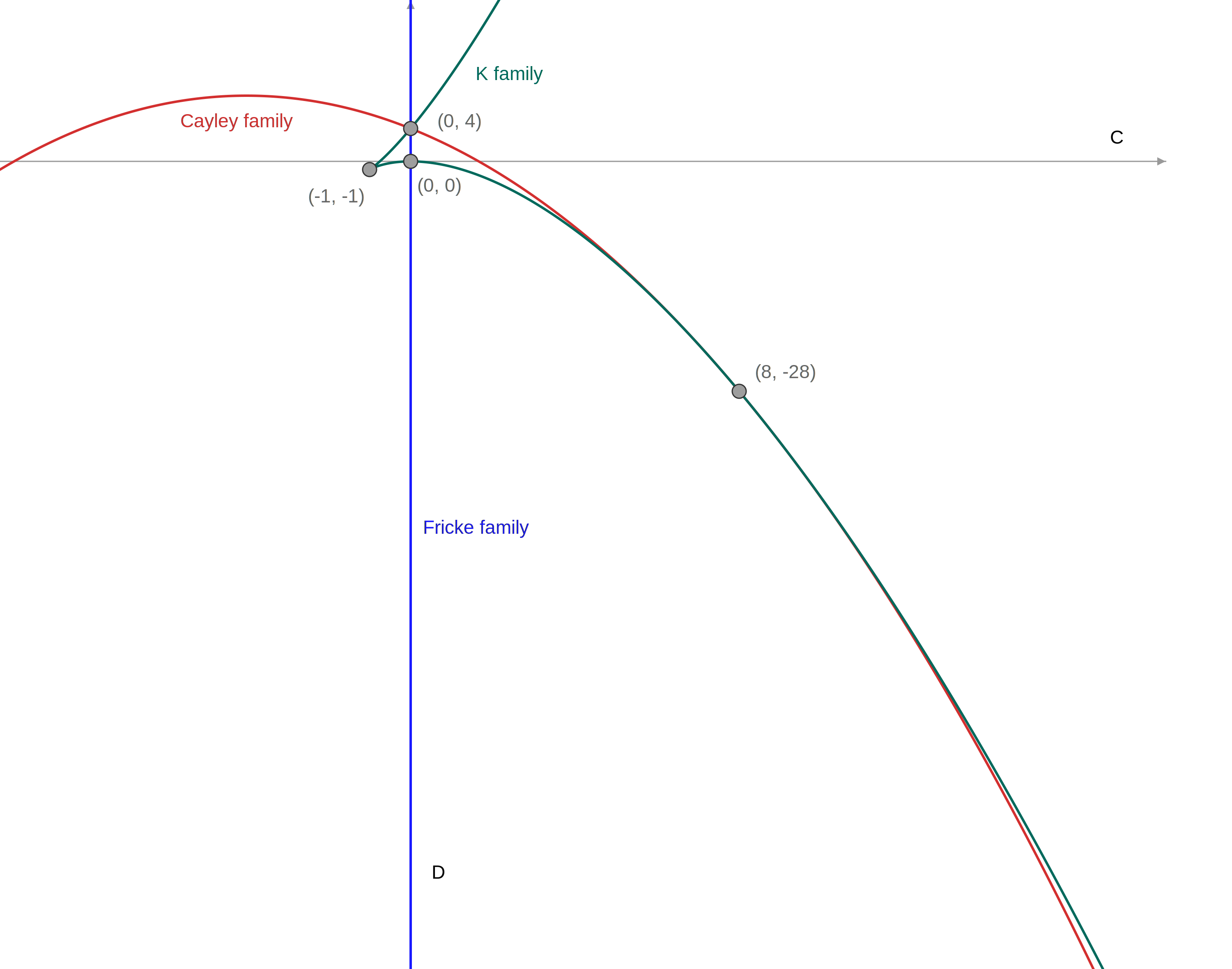}
    \caption{The $K$ family meets the Fricke family ($C=0$) and the Cayley family ($D=4-2C-C^2/4$) in the real $(C,D)$ plane. Note: the axes are not equally scaled.}
    \label{fig: CK}
\end{figure}

It was convenient to introduce the $K$ family in relation to the modified Markov equation \eqref{eq: GSLM}, which was recently studied in the literature. But that is not the main reason why the $K$ family deserves attention for the discussion at hand. Two structural facts make the $K$ family stand out in our study of the Vieta graph for the generalized Markoff equation $\M(C,D)$. By either count, the $K$ family is in fact inevitable. On the one hand, it turns out that the $K$ family naturally comes up in the $N_3$ count; see the next section. On the other hand, and perhaps more subtly, one can arrive at the $K$ family by considering the non-separability of the cubics $\kappa$ and $\lambda$. That $\kappa$ and $\lambda$ have repeated roots in the case of the $K$ family will be apparent in the proof of Lemma~\ref{lem: Kfam}. Conversely, it can be checked that $\kappa$ and $\lambda$ are non-separable if and only if $C$ and $D$ are parameterized as in \eqref{eq: CDK}. The argument is straightforward but somewhat tedious, so we omit it.

\section{The $N_3$ count}\label{sec: N3}
In algebraic geometry, a \emph{node} on a cubic hypersurface $f(x,y,z)=0$ is a point where $\partial_x f=\partial_y f=\partial_z f=0$; a cubic is \emph{smooth} if it has no nodes. Therefore $N_3$ is the nodal count for the generalized Markoff equation $\M(C,D)$. In the Vieta graph of $\M(C,D)$, the $N_3$ count is the number of isolated vertices.

\begin{thm}\label{thm: nodes}
A generalized Markoff cubic has $N_3=0$ nodes--so it is smooth--unless it belongs to the Cayley family or to the $K$ family. 

A cubic in the $K$ family has $N_3=1$ nodes, namely $(-K,-K,-K)$, except for the Cayley cubic $\M(0,4)$. 

A cubic in the Cayley family has $N_3=3$ nodes, namely $(-2, -2, 2-C/2)$, $(-2, 2-C/2, -2)$, and $(2-C/2, -2, -2)$, except for the Cayley cubic and the $\M(8,-28)$ cubic (which has $N_3=1$ nodes). 

The Cayley cubic has $N_3=4$ nodes: $(-2, -2, 2)$, $(-2, 2, -2)$, $(2, -2, -2)$, and $(2, 2, 2)$. 
\end{thm}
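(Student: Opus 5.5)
The plan is to solve the nodal system directly, without going through the $N_2$ analysis. A node is a solution of $f=0$ together with
\[
2x=yz-C,\qquad 2y=xz-C,\qquad 2z=xy-C.
\]
Subtracting the first two equations gives $2(x-y)=z(y-x)$, that is, $(x-y)(z+2)=0$. The other two pairs give $(y-z)(x+2)=0$ and $(x-z)(y+2)=0$. So every node falls into one of two cases: either $x=y=z$, or at least one coordinate equals $-2$. I will treat the two cases separately and then check overlaps.

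\emph{Diagonal nodes.} Put $x=y=z=t$. The derivative condition becomes $t^2=2t+C$. I will use it to reduce $f(t,t,t)=3t^2-t^3+3Ct-D$, via $t^3=2t^2+Ct$, to the linear condition
\[
D=2t+C+2Ct.
\]
Now set $K=-t$. The two conditions read $C=K^2+2K$ and $D=-(2K^3+3K^2)$, which is exactly the parameterization \eqref{eq: CDK}. Hence diagonal nodes exist precisely for cubics in the $K$ family, and the node is $(-K,-K,-K)$.

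One point needs checking: a given pair $(C,D)$ could in principle arise from two values of $K$, which would give two diagonal nodes. The values of $K$ with the same $C$ are $K$ and $-2-K$. I will compare $D(K)$ with $D(-2-K)$ and expect them to agree only when the two values coincide ($K=-1$), or else in the Cayley case $K\in\{0,-2\}$. Notably $K=0$ and $K=-2$ give $D=0$ versus $D=4$, which differ. This short polynomial check is the main place where care is needed.

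\emph{Nodes with a coordinate equal to $-2$.} Suppose $z=-2$. The first two equations both reduce to $x+y=-C/2$, and the third gives $xy=C-4$. So $x$ and $y$ are the roots of
\[
u^2+\tfrac{C}{2}u+C-4=(u+2)\big(u-2+\tfrac{C}{2}\big),
\]
which means $\{x,y\}=\{-2,\,2-C/2\}$. Substituting, for instance $(-2,-2,2-C/2)$, into $f=0$ should give $D=D_\bullet$; equivalently, I will reuse the computation $\Delta(-2,y)=4(D-D_\bullet)$ from the proof of Theorem~\ref{thm: N2}. Thus non-diagonal nodes occur exactly in the Cayley family, and they are the three points in the statement, obtained by placing $2-C/2$ in each coordinate.

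\emph{Assembling the counts.} In the Cayley family the three points collapse to the single point $(-2,-2,-2)$ exactly when $2-C/2=-2$, that is, $C=8$; this is the $\M(8,-28)$ case, and that node is also the $K=2$ diagonal node. For the Cayley cubic, $C=0$, the three Cayley points $(-2,-2,2)$, $(-2,2,-2)$, $(2,-2,-2)$ are joined by the $K=-2$ diagonal node $(2,2,2)$. The $K=0$ node $(0,0,0)$ belongs to $\M(0,0)$, which is not Cayley. To get the generic counts I need the diagonal node $(-K,-K,-K)$ to lie outside the three-point set unless $K=\pm2$, and this is where \eqref{KCay} comes in: a $K$ cubic is Cayley iff $K=\pm 2$. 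Putting these together yields all four cases of the theorem. I expect the only genuine obstacle to be the bookkeeping of coincidences, namely the uniqueness of $K$ for a given $(C,D)$ and the overlap between the two families; everything else is direct substitution.
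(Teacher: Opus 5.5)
Your proposal is correct, and it takes a genuinely different route from the paper.

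\textbf{How the paper argues.} The paper does not solve the gradient system from scratch. It starts from the solutions already parametrized in the proof of Theorem~\ref{thm: N2}, namely those with $\partial_y f=\partial_z f=0$, split into $x=-2$, $x=2$, and $x\neq\pm2$ with $\kappa(x)=0$. It then imposes the extra condition $2x=yz-C$. In the non-Cayley case with $C\neq 0$ this gives $(x_0-2+C/2)(x_0^2-2x_0-C)=0$, and the first factor is ruled out by \eqref{eq: kappa2}. Dividing $\kappa$ by $x^2-2x-C$ then produces a quadratic in $D$ with discriminant $16(C+1)^3$. This forces $C+1=E^2$, and the substitution $E=K+1$ gives the $K$ parametrization. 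The values $K=0,-1,\pm2$, excluded along the way ($C=0$, $D=C$, $D=D_\bullet$), must be revisited separately.

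\textbf{How your route differs.} Your pairwise subtraction $(x-y)(z+2)=0$ etc.\ uses the symmetry of the system at the outset.
\begin{itemize}
\item The dichotomy ``diagonal, or some coordinate equals $-2$'' is immediate.
\item The $K$ parametrization drops out just by setting $K=-t$.
\item The Cayley condition comes from a single substitution.
\end{itemize}
So your argument is shorter, independent of Theorem~\ref{thm: N2}, and has fewer exceptional cases. The paper's route instead fits the nested $N_k$ framework, and it shows how the $K$ family is forced, through the square condition on $C+1$.

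\textbf{The pending uniqueness check.} The computation you left open does go through:
\[
D(K)-D(-2-K)=-4(K+1)^3,
\]
so $(C,D)$ determines $K$ uniquely. A simpler argument is also available. Your relation $D=2t+C+2Ct$ reads $(2+2C)t=D-C$, which pins down $t$ when $C\neq-1$. When $C=-1$, the condition $t^2=2t+C$ becomes $(t-1)^2=0$. Either way there is at most one diagonal node. Your hedge about a coincidence when $K\in\{0,-2\}$ is unnecessary, since those values give $D=0$ and $D=4$, as you yourself note.
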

\begin{figure}[ht]
    \hspace*{-1.0cm}    
    \includegraphics[width=1.2\textwidth]{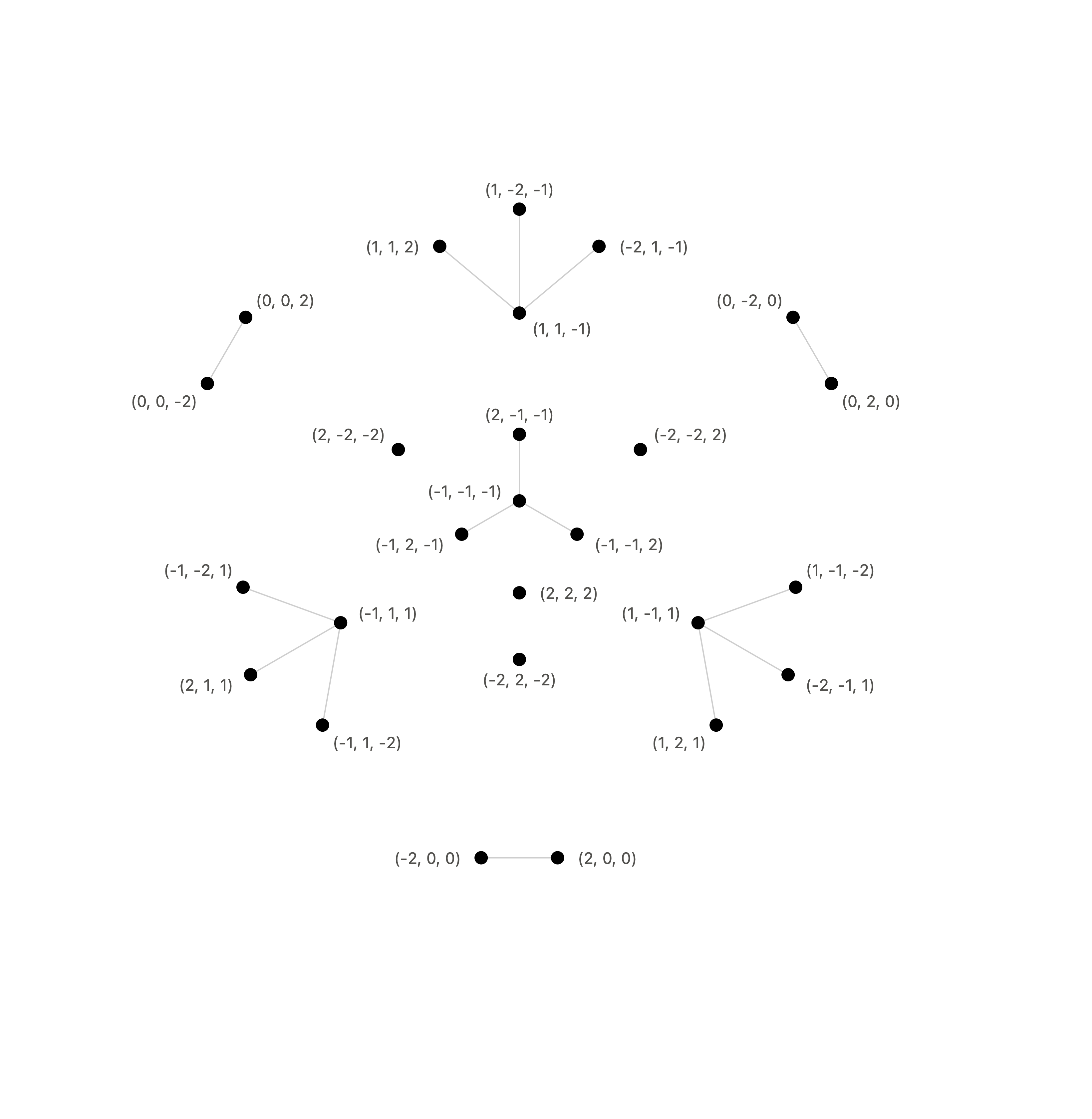}
    \vspace{-3cm}\caption{The Vieta graph of the Cayley cubic $\M(0,4)$ over $\F_{\!5}$. There are $N_3=4$ isolated vertices.}
    \label{fig: C0_D4}
\end{figure}

\begin{proof}
We show that a node can only occur in the Cayley family and in the $K$ family; and that when it does occur, a node has one of the following two types: 
\begin{itemize}
\item[(i)] $(-2, -2, 2-C/2)$, $(-2, 2-C/2, -2)$, $(2-C/2, -2, -2)$ in the Cayley family;
\item[(ii)] $(-K, -K, -K)$ in the $K$ family.
\end{itemize} 
Thus, generically, a cubic in the Cayley family has three nodes, and a cubic in the $K$ family has one node. The exceptions are the common cubics. For the Cayley cubic $\M(0,4)$, the four nodes of type (i) and (ii) coexist. For the $\M(8,-28)$ cubic, they degenerate into a single node $(-2, -2, -2)$.

Consider the polynomial $f$ of \eqref{eq: mainf}. The nodes are those solutions $(x,y,z)$ in the $N_2$ count that satisfy, in addition, $(\partial_x f)(x,y,z)=0$; that is to say, they satisfy 
\[2x=yz-C. \tag{$*$}\] 
We thus have to impose $(*)$ on the various solutions that we have found in the proof of Theorem~\ref{thm: N2}.

Assume $D=D_\bullet$. Among the solutions of the form $(-2,y, -y-C/2)$, those that satisfy $(*)$ occur for $y=-2$ or $y=2-C/2$. We thus have the nodes $(-2, -2, 2-C/2)$ and $(-2, 2-C/2, -2)$. If $C\neq 0,8$, then the solution $(2-C/2, -2, -2)$ satisfies $(*)$, so it is another node. If $C=0$, then among the solutions of the form $(2,y,y)$, only $(2,2,2)$ and $(2,-2,-2)$ satisfy $(*)$. In summary, for all values of $C$ we get the nodes $(-2, -2, 2-C/2)$, $(-2, 2-C/2, -2)$, $(2-C/2, -2, -2)$. This checks out part (i). The value $C=0$ also has $(2,2,2)$ as an additional node; this agrees with part (ii) for $K=-2$. 

Assume now that $D\neq D_\bullet$. If $C=0$, then the solutions coming from the $N_2$ count take the form $(x_0,0,0)$ where $x_0^2=D$. Imposing $(*)$ determines  $x_0=0$, so we only get the node $(0,0,0)$ when $D=0$. This case agrees with part (ii) for $K=0$. 

If $C\neq 0$, then we are looking for a node among solutions of the form $(x_0,C/(x_0-2), C/(x_0-2))$ with $x_0\neq \pm 2$. Imposing $(*)$ amounts to 
\[2x_0=\frac{C^2}{(x_0-2)^2}-C,\]
equivalently $(x_0-2+C/2)(x_0^2-2x_0-C)=0$. Note, however, that $x_0$ is a root of the cubic $\kappa(x)$. The alternate expression \eqref{eq: kappa2} clearly rules out the possibility that $x_0=2-C/2$; it also ensures that $x_0\neq \pm 2$. Therefore $x_0^2-2x_0-C=0$. This relation implies, incidentally, that $C/(x_0-2)=x_0$, so the desired triple is fully symmetric, $(x_0,x_0,x_0)$.

Dividing $\kappa(x)$ by $x^2-2x-C$ yields $(C-D)x+2(C^2+D)$ as a remainder. Consequently, we obtain the system
\[\begin{cases}
x_0^2-2x_0-C=0,\\
(C-D)x_0+2(C^2+D)=0.
\end{cases}\]

The case $D=C$ forces $C=-1$ and then $x_0=1$. Thus, for the parameters $C=D=-1$ we get $(1,1,1)$ as a node. This case agrees with part (ii) for $K=-1$. Note that $D\neq D_\bullet$ does hold for $C=D=-1$.

The case $D\neq C$ gives 
\[x_0=\frac{2(D+C^2)}{D-C}=2+\frac{2C(C+1)}{D-C}.\]
Now $x_0^2-2x_0-C=0$ can be brought to a quadratic equation in $D$:
\[D^2-2(3C+2)D-(4C^3+3C^2)=0.\]
The discriminant is $16(C+1)^3$. In order for the quadratic to have solutions, $C+1$ must be a square, say $C+1=E^2$ for some $E\in \Fp$. The solutions are $D=3E^2-1 \pm 2E^3$. We may choose $D=3E^2-1- 2E^3$; the other choice of sign can be recovered via $E:=-E$. Next, we find $x_0=1-E$. So, to summarize, we get the node $(1-E, 1-E, 1-E)$ when $C=E^2-1$ and $D=3E^2-1- 2E^3$ for some $E\in \Fp$. The substitution $E:=K+1$ gives $C=K^2+2K$ and $D=-(2K^3+3K^2)$, the parametrization of the $K$ family; as for the node, it takes the form $(-K,-K,-K)$. This completes the verification of part (ii).

Strictly speaking, a few values of the parameter $E\in \Fp$ have to be ruled out here. The requirements $C\neq 0$,  $D\neq C$, $D\neq D_\bullet$ amount to $E\neq -1, 0, 1, 3$. In terms of $K$, we need $K\neq -2, -1, 0, 2$. We have already seen that the values $K=-2, -1, 0$ satisfy part (ii). The case $K=2$ is in agreement as well: the corresponding cubic $\M(8,-28)$ belongs to the Cayley family, in which case part (i) degenerates to one symmetric node $(-2, -2, -2)$. \end{proof}

It is well known that the Cayley cubic is distinguished among cubic surfaces as having the most nodes, namely $4$. Cubics along the Cayley family share the feature of having a relatively large number of nodes. The exception is the $\M(8,-28)$ cubic, which seems to counterbalance the nodal excess of the Cayley cubic. 

\begin{figure}[ht]
    \centering
    \includegraphics[width=0.9\textwidth]{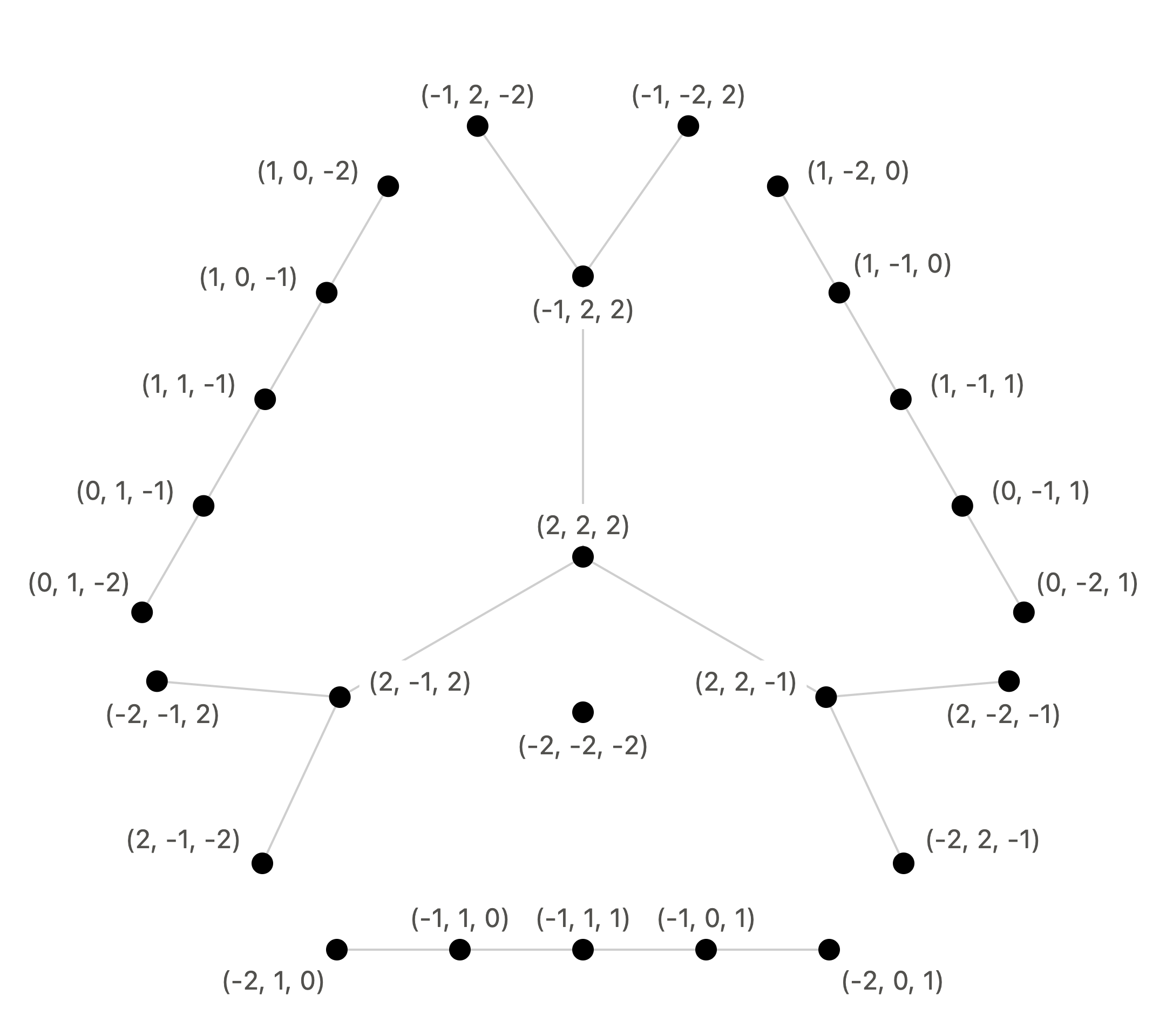}
    \caption{The Vieta graph of the cubic $\M(8,-28)$ over $\F_{\!5}$. The only isolated vertex is $(-2,-2,-2)$.}
    \label{fig: C3_D2}
\end{figure}

\section{Degree distributions: the Cayley family and the $K$ family}
We can finally derive the explicit degree distribution for some distinguished families of $3$-dimensional Vieta graphs. Let us recall from Lemma~\ref{lem: Nk} that the total number of vertices, the number of deficient vertices, and the degree distribution in a $3$-dimensional Vieta graph can be obtained from the $N_k$ counts--that is, $N_0$, $N_1$, $N_2$, and $N_3$--as in the table below. The formulas for $N_0$, $N_1$, $N_2$, and $N_3$ are given in Theorem~\ref{thm: v}, Theorem~\ref{thm: N1}, Theorem~\ref{thm: N2} and Theorem~\ref{thm: nodes} respectively.

\begin{table}[ht]\renewcommand{\arraystretch}{1.2}
 \begin{tabular}{| c || c |} 
 \hline
total & $N_0$  \\
deficient & $3N_1-3N_2+N_3$  \\
 \hline
degree $2$ & \: $3(N_1-2N_2+N_3)$ \:  \\
degree $1$ & $3(N_2-N_3)$ \\
degree $0$ & $N_3$  \\
 \hline
\end{tabular}  \bigskip\caption{Degree counts from $N_k$ counts in dimension $3$.}\end{table}

In this section, we start off with the easiest one-parameter families: the Cayley family and the $K$ family. The Fricke family turns out to be trickier, and will be dealt with later.  

\subsection{The Cayley family} Recall, this the family described by $D=D_\bullet$; the corresponding generalized Markoff equation is \eqref{eq: C}. Two parameter values play a special role: $C=0$ and $C=8$. They describe the Cayley cubic $\M(0,4)$, respectively the $\M(8,-28)$ cubic.

For this family, we already have all the $N_k$ counts. We collect them below.

\begin{countlem}
For the Cayley family of cubics, the $N_k$ counts are given as follows:
\begin{align*}
N_0&=p^2+1+\sigma(C^2-8C)\cdot p,\\
N_1&=\big(3+\llbracket C=0\rrbracket \big) p-4+\llbracket C=8\rrbracket,\\
N_2&= \big(1+\llbracket C=0\rrbracket\big)p+1-\llbracket C=0\rrbracket-\llbracket C=8\rrbracket,\\
N_3&=3+\llbracket C=0\rrbracket-2\llbracket C=8\rrbracket.
\end{align*}
\end{countlem}

We obtain the following result.

\begin{thm}\label{thm: CayleyDetailed} Consider the Vieta graph of a cubic in the Cayley family. The total number of vertices, the number of deficient vertices, and the degree distribution for the deficient vertices is given in Table~\ref{table Cayley} below.

\begin{table}[ht]\renewcommand{\arraystretch}{1.2}
 \begin{tabular}{| c || c | c | c |} 
  \hline
  &  $C=0$ &  $C=8$ &  $C\neq 0,8$  \\
 \hline
total & $p^2+1$   & \: $p^2+1$  \: & \: $p^2+1\pm p$ \: \\
deficient & $6p-8$ & \: $6p-8$  \: & \: $6p-12$ \:  \\
 \hline
degree $2$ & \: $0$  \: & \: $3(p-2)$ \: & \:  $3(p-3)$ \: \\
degree $1$ & \: $6(p-2)$  \: & \: $3(p-1)$ \: & \:  $3(p-2)$ \:\\
degree $0$ & \: $4$  \: & \: $1$ \: & \:  $3$ \:\\
 \hline
 \end{tabular} \bigskip\caption{Degree distribution in Vieta graphs for cubics in the Cayley family.}\label{table Cayley}
\end{table}
\end{thm}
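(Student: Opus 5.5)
The plan is direct substitution: feed the Counting Lemma for the Cayley family into the table of degree counts from $N_k$ counts in dimension $3$, which itself comes from Lemma~\ref{lem: Nk}. The total is just $N_0$. Since $\sigma(C^2-8C)=0$ exactly when $C\in\{0,8\}$, we get $p^2+1$ in those two columns and $p^2+1\pm p$ otherwise. For the remaining rows we use the formulas deficient $=3N_1-3N_2+N_3$, degree $2$ $=3(N_1-2N_2+N_3)$, degree $1$ $=3(N_2-N_3)$, and degree $0$ $=N_3$.

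The only real work is splitting into the three cases $C=0$, $C=8$, and $C\neq 0,8$, and specializing the Iverson brackets in each.

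\emph{Case $C=0$.} Here $N_1=4p-4$, $N_2=2p$, $N_3=4$. This gives degree $0$ equal to $4$, degree $1$ equal to $3(2p-4)=6(p-2)$, and degree $2$ equal to $3(4p-4-4p+4)=0$. The deficient count is $12p-12-6p+4=6p-8$.

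\emph{Case $C=8$.} Here $N_1=3p-3$, $N_2=p$, $N_3=1$. This gives degree $1$ equal to $3(p-1)$, degree $2$ equal to $3(3p-3-2p+1)=3(p-2)$, and deficient count $9p-9-3p+1=6p-8$.

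\emph{Case $C\neq 0,8$.} Here $N_1=3p-4$, $N_2=p+1$, $N_3=3$. This gives degree $1$ equal to $3(p-2)$, degree $2$ equal to $3(3p-4-2p-2+3)=3(p-3)$, and deficient count $9p-12-3p-3+3=6p-12$.

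As a consistency check, in each case the deficient count should equal the sum of the degree-$2$, degree-$1$ and degree-$0$ rows. For instance, when $C\neq 0,8$ we have $3(p-3)+3(p-2)+3=6p-12$.

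There is no genuine obstacle, since all four $N_k$ counts are already established. The only point requiring care is that the $N_3$ entry of the Counting Lemma, namely $3+\llbracket C=0\rrbracket-2\llbracket C=8\rrbracket$, correctly encodes Theorem~\ref{thm: nodes}. For the Cayley cubic the three type-(i) nodes are joined by $(2,2,2)$, and for $\M(8,-28)$ they collapse to the single node $(-2,-2,-2)$. Once that is accepted, the table follows by the arithmetic above.
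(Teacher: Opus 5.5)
Your proposal is correct and follows the paper's own route exactly: the theorem is obtained by substituting the Cayley-family Counting Lemma into the dimension-$3$ degree table from Lemma~\ref{lem: Nk}, splitting into the cases $C=0$, $C=8$, and $C\neq 0,8$. Your specializations of the Iverson brackets and the resulting arithmetic in all three columns check out.
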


Two interesting outcomes are noticeable in Table~\ref{table Cayley}. Firstly, the Cayley cubic $\M(0,4)$ and the $\M(8,-28)$ cubic are alike in that they have the same number of vertices, and the same number of deficient vertices. Secondly, the Cayley cubic has no vertices of degree $2$. There is a handful of Vieta graphs for the equation $\M(C,D)$ with this property--for example $\M(0,0)$ and $\M(0,2)$ over $\F_{\!5}$, and $\M(3,6)$ and $\M(0,5)$ over $\F_{\!7}$--but these can only occur for $p\leq 17$. The fact here is that the Cayley cubic is the only generalized Markoff cubic whose Vieta graph has no vertices of degree $2$, independently of $p$. 

\begin{rem}
The number of deficient vertices in the Cayley family is unusually high. The Vieta graph of a generalized Markoff cubic has $p^2+O(p)$ vertices; it has $6p+O(1)$ deficient vertices if it belongs to the Cayley family but otherwise it has only about half as many deficient vertices--namely, $3p+O(\sqrt{p})$. 
\end{rem}

\subsection{The $K$ family} 
Recall, this family is the one-parameter family given by $C=K^2+2K$ and $D=-(2K^3+3K^2)$; the corresponding generalized Markoff equation is \eqref{eq: KMar}. For $K=-2$ and $K=2$ we get cubics that also belong to the Cayley family, discussed above, namely the Cayley cubic ($C=0$) and the $\M(8,-28)$ cubic ($C=8$). In what follows, we put aside these parameter values. Two other special parameter values are $K=0$ and $K=-1$.

\begin{countlem}\label{lem: Kfam} Consider the $K$ family of cubics, where $K\neq \pm 2$. Then the $N_k$ counts are given as follows:
\begin{align*}
N_0&= p^2+1+\big(3-\llbracket K=-1\rrbracket\big)\sigma(K^2-4)\cdot p, \\
N_1&= p-3-\sigma(-K-1),\\
N_2&=2-\llbracket K=-1\rrbracket-\llbracket K=0\rrbracket,\\
N_3&=1.
\end{align*}
\end{countlem}

\begin{proof} Recall, $K\neq \pm 2$ means that we are in the non-Cayley case $D\neq D_\bullet$. In fact $4(D-D_\bullet)=(K+2)(K-2)^3$, as pointed out in \eqref{KCay}.

The formula for $N_0$ comes from Theorem~\ref{thm: v}(ii). Here $\sigma(D-D_\bullet)=\sigma(K^2-4)$, and $\kappa(x)=(x+K^2-2)(x+K)^2$. Thus $\kappa$ has $-K$ and $2-K^2$ as roots, implying that $N_0(\kappa)=2-\llbracket K=-1\rrbracket$.

The formula for $N_1$ comes from Theorem~\ref{thm: N1}(ii). Just like $\kappa$ above, $\lambda$ also has a simple split formula: $\lambda(x)=\big(x+(K-2)^2\big)\big(x+(K+2)(K-2)^2\big)^2$. We may then evaluate, very easily, that
\begin{align*}
\sum_{x\in \Fp}\sigma(\lambda(x))&=\sum_{x\neq -(K+2)(K-2)^2}\sigma\big(x+(K-2)^2\big)\\
&=-\sigma\big(-(K+2)(K-2)^2+(K-2)^2\big)=-\sigma(-K-1).
\end{align*}

The formula for $N_2$ comes from Theorem~\ref{thm: N2}(ii). Note that $C=0$ if and only if $K=0$. As we have already seen, $N_0(\kappa)=2-\llbracket K=-1\rrbracket$. 

That $N_3=1$ is part of Theorem~\ref{thm: nodes}.
\end{proof}

The above proof highlights that $\kappa$ and $\lambda$ are non-separable for the $K$ family of cubics. Furthermore, $\kappa$ and $\lambda$ have a triple root if and only if $K=-1$. (Note: $\kappa$ and $\lambda$ are only considered in the non-Cayley case $K\neq \pm 2$.) 

\begin{thm}
Consider the Vieta graph of a cubic in the $K$ family, where $K\neq \pm 2$. The total number of vertices, the number of deficient vertices, and the degree distribution for the deficient vertices is given in Table \ref{T3} below.

\begin{table}[ht]\renewcommand{\arraystretch}{1.2}
 \begin{tabular}{| c || c | c | c |} 
  \hline
 &  $K=0$ &  $K=-1$ & $K\neq 0,-1, \pm 2$ \\
 \hline 
\rule{0pt}{2.5ex}total & \:$p^2+1+3\sigma(-1)p$\:  & \:$p^2+1+2\sigma(-3)p$\: & \:$p^2+1+3\sigma(K^2-4)p$\: \\
\rule{0pt}{2.5ex}deficient &  $3p-11-3\sigma(-1)$ & $3p-11$ & $3p-14-3\sigma(-K-1)$ \rule[-1ex]{0pt}{0pt}  \\ \hline
\rule{0pt}{2.5ex}degree $2$ & $3\big(p-4-\sigma(-1)\big)$  & $3(p-4)$ &  $3\big(p-6-\sigma(-K-1)\big)$  \\
degree $1$ & $0$  & $0$ & $3$ \\
degree $0$ &  $1$  & $1$ &  $1$ \\
 \hline
 \end{tabular} \bigskip\caption{Degree distribution in Vieta graphs for cubics in the $K$ family.}\label{T3}
\end{table}
\end{thm}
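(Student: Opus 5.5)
The plan is to plug the counts from Counting Lemma~\ref{lem: Kfam} into the dimension-3 table derived from Lemma~\ref{lem: Nk}: total $=N_0$, deficient $=3N_1-3N_2+N_3$, degree $2$ $=3(N_1-2N_2+N_3)$, degree $1$ $=3(N_2-N_3)$, degree $0$ $=N_3$. Since $K\neq\pm 2$ throughout, the Counting Lemma applies. The work consists of specializing its Iverson brackets and quadratic characters to the three columns $K=0$, $K=-1$, and $K\neq 0,-1,\pm 2$.

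The steps are as follows.

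\emph{Step 1: $N_0$.} For $K=0$ we have $\sigma(K^2-4)=\sigma(-4)=\sigma(-1)$ and $\llbracket K=-1\rrbracket=0$, which gives $p^2+1+3\sigma(-1)p$. For $K=-1$ the coefficient is $3-1=2$ and $\sigma(1-4)=\sigma(-3)$, which gives $p^2+1+2\sigma(-3)p$. The generic case is read off directly.

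\emph{Step 2: $N_1$.} Here $N_1=p-3-\sigma(-K-1)$. This gives $p-3-\sigma(-1)$ for $K=0$ and $p-3$ for $K=-1$, since $\sigma(0)=0$. For generic $K$ we keep $\sigma(-K-1)$ as it stands; note it can vanish only at $K=-1$.

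\emph{Step 3: $N_2$ and $N_3$.} We have $N_2=1$ for $K=0$ and for $K=-1$, and $N_2=2$ generically, with $N_3=1$ in all cases.

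\emph{Step 4: combine.}
\begin{itemize}
\item For $K=0$: deficient $=3(p-3-\sigma(-1))-3+1=3p-11-3\sigma(-1)$; degree $2$ $=3(p-3-\sigma(-1)-2+1)=3(p-4-\sigma(-1))$; degree $1$ $=0$.
\item For $K=-1$: deficient $=3(p-3)-3+1=3p-11$; degree $2$ $=3(p-4)$; degree $1$ $=0$.
\item Generically: deficient $=3(p-3-\sigma(-K-1))-6+1=3p-14-3\sigma(-K-1)$; degree $2$ $=3(p-3-\sigma(-K-1)-4+1)=3(p-6-\sigma(-K-1))$; degree $1$ $=3(2-1)=3$.
\end{itemize}
In every case degree $0$ equals $N_3=1$.

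There is no genuine obstacle, since all the substance is already in the Counting Lemma. The only place needing care is the bookkeeping of the special values. One must check that $K=0$ and $K=-1$ are the only values where the brackets in $N_0$ and $N_2$ change, and that $\sigma(-K-1)$ is correctly evaluated to $0$ at $K=-1$. One should also confirm that the $K=0$ column agrees with the known Markoff counts, as a sanity check.
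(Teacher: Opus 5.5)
Your proposal is correct and is exactly the paper's route: substitute the $N_k$ counts of Counting Lemma~\ref{lem: Kfam} into the dimension-$3$ degree table obtained from Lemma~\ref{lem: Nk}. Your specializations for $K=0$, $K=-1$, and generic $K$ are all correct, including $\sigma(-4)=\sigma(-1)$, $\sigma(0)=0$, and $N_2$ dropping to $1$ only at $K=0$ and $K=-1$.
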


In Corollary~\ref{cor: no01}, we characterized the absence of isolated and leaf vertices in the Vieta graph of $\M(C,D)$. As displayed in Table~\ref{T3}, a noticeable feature of the Vieta graph for the Markoff cubic $\M(0,0)$ (the case $K=0$) and for the $\M(-1, -1)$ cubic (the case $K=-1$), is that they have no leaves yet they do have isolated vertices. It is not hard to check that they are the only Vieta graphs for generalized Markoff cubics having this feature.

\begin{figure}[ht]
    \centering
    \includegraphics[width=0.9\textwidth]{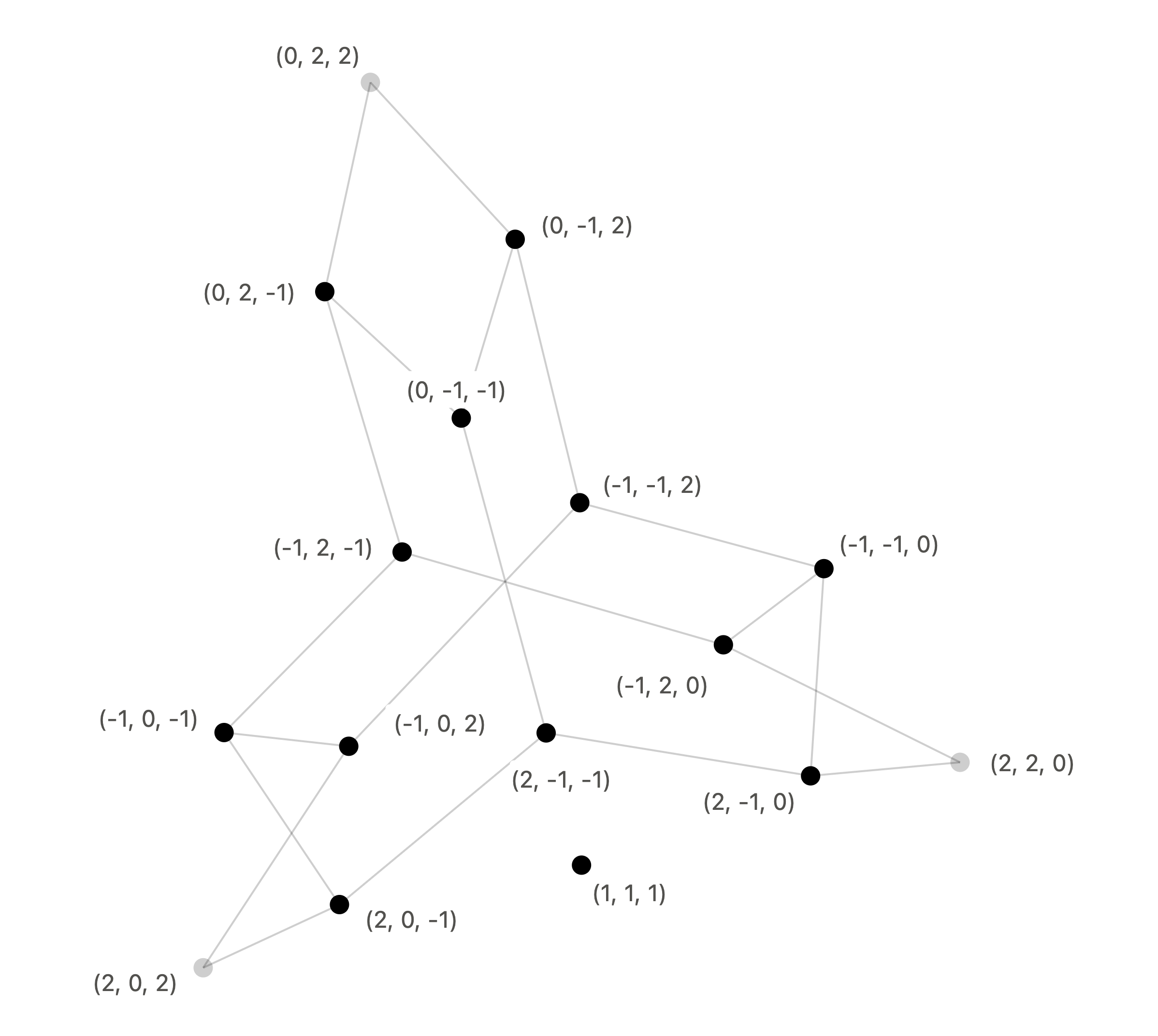}
    \caption{The Vieta graph of the cubic $\M(-1,-1)$ over $\F_{\!5}$. The deficient vertices are all of degree $2$ (highlighted in light grey), except for the one isolated vertex.}
    \label{fig: Cm1_Dm1}
\end{figure}

\section{An interlude on Jacobsthal sums}\label{sec: Jac}
In the non-Cayley case $D\neq D_\bullet$, the main difficulty in obtaining an explicit degree distribution for the Vieta graph of $\M(C,D)$ is that of computing the quadratic character sum 
\begin{align}\label{eq: charsum}
\sum_{x\in \Fp}\sigma(\lambda(x))
\end{align}
where $\lambda(x)$ is the cubic defined in \eqref{eq: lambda}.

The problem of computing a quadratic character sum with a cubic argument, of the form \eqref{eq: charsum}, is very important and very difficult in general. If $\lambda(x)$ is non-separable, then the evaluation of the sum \eqref{eq: charsum} is straightforward. Let us recall that non-separability of $\lambda(x)$ is one of the motivations for considering the $K$ family of generalized Markoff cubics. 

In the separable case, fairly simple explicit evaluations of the sum \eqref{eq: charsum}  are known when $\lambda(x)$ is (up to a variable shift) of the form $x^3+bx$ or $x^3+c$. The following is a brief overview of the cubic Jacobsthal sums
\begin{align}
\varphi_2(b)&=\sum_{x\in \Fp} \sigma(x^3+bx) \qquad (b\in \Fp^*),\\
\psi_3(c)&=\sum_{x\in \Fp} \sigma(x^3+c)\qquad (c\in \Fp^*). 
\end{align}
The indexing owes to the fact that these quadratic character sums with cubic arguments have higher degree analogues, see \cite[Chapter 3]{N}.

The Jacobsthal sums $\varphi_2(b)$ and $\psi_3(c)$ can be evaluated in terms of decompositions of $p$ into sums of squares. The main evaluations are those for $\varphi_2(1)$ and $\psi_3(1)$. The following two lemmas collect relevant results from \cite[Chapter 3]{N}.

\begin{lem}\label{lem: J1} The following hold:
\begin{itemize}
\item[(i)] $\varphi_2(s^2b)=\sigma(s)\varphi_2(b)$ for $s\in \Fp^*$;
\item[(ii)] if $p\equiv 3$ mod $4$ then $\varphi_2(b)=0$;
\item[(iii)] if $p\equiv 1$ mod $4$, let $A=A_2(p)$ be the integer defined by writing $p=A^2+B^2$ with $A\equiv -1$ mod $4$; then $\varphi_2(1)=2A_2(p)$.
\item[(iv)] $\varphi_2(-1)=\sigma(2)\varphi_2(1)$.
\end{itemize}
\end{lem}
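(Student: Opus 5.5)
Parts (i), (ii) and (iv) are elementary manipulations of the sum $\varphi_2(b)=\sum_{x}\sigma(x^3+bx)$; I would prove them first, and treat (iii) separately as the substantive part.

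For (i), substitute $x:=sx$. Then $x^3+s^2bx$ becomes $s^3(x^3+bx)$, so $\sigma$ picks up a factor $\sigma(s^3)=\sigma(s)$. Since $x\mapsto sx$ permutes $\Fp$, this gives $\varphi_2(s^2b)=\sigma(s)\varphi_2(b)$.

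For (ii), substitute $x:=-x$. The argument $x^3+bx$ changes sign, so $\varphi_2(b)=\sigma(-1)\varphi_2(b)$. When $p\equiv 3$ mod $4$ we have $\sigma(-1)=-1$, and the sum must vanish.

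For (iv), assume $p\equiv 1$ mod $4$; otherwise both sides vanish by (ii). Then $-1$ has a square root $i\in\Fp$ and $2i=(1+i)^2$ is a square. I would compare $\varphi_2(-1)$ with $\varphi_2(1)$ by rewriting $x^3-x$ in the form $(\text{const})\cdot(y^3+y)$ after a Möbius-type change of variables. An affine substitution cannot do it, because $-1/1$ is not a square times a fourth power in the relevant sense. The cleanest route is to pass to the standard identity $\varphi_2(b)=\sum_{x}\sigma(x)\sigma(x^2+b)$. Then I would use the substitution $x\mapsto (x+1)/(x-1)$-style maps, or equivalently relate both sums to $\sum_x\sigma(x^4+\dots)$ through Lemma~\ref{lem: q2c}. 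Tracking the constants should produce the factor $\sigma(2)$, coming from $(1\pm i)^2=\pm 2i$.

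For (iii), which I expect to be the main obstacle, I would use Jacobi sums rather than prove it from scratch. Let $\chi$ be a quartic character, which exists since $4\mid p-1$, so that $\chi^2=\sigma$. Splitting by $x^2=u$ and writing $1+\sigma(u)$ for the number of square roots gives
\[\varphi_2(1)=\sum_u \sigma(u+1)\big(\chi(u)+\overline{\chi}(u)\big)=2\,\mathrm{Re}\,\big(\chi(-1)J(\chi,\sigma)\big).\]
The Jacobi sum satisfies $|J|^2=p$ and $J\in\Z[i]$, so $J=a+bi$ with $a^2+b^2=p$. The real part is therefore $\pm A$, which settles everything except the sign.

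Fixing the sign uses the congruence $J(\chi,\sigma)\equiv -\chi(-1)$ modulo $(1+i)^3$, a classical computation done by counting mod $2+2i$. Together with the prefactor $\chi(-1)$, this forces $\varphi_2(1)\equiv -2$ mod $8$, that is $\varphi_2(1)/2\equiv -1$ mod $4$. This congruence is exactly the normalization $A\equiv -1$ mod $4$ in the statement.

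In keeping with the paper, which cites these facts from \cite[Chapter 3]{N}, I would present (i), (ii) and (iv) in full and reduce (iii) to the quoted Jacobi sum congruence.
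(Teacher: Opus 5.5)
The paper gives no proof of this lemma; it quotes it from Chapter 3 of the book on character sums it cites. Your proofs of (i) and (ii) are correct.

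\textbf{Part (iv).} Your claim that an affine substitution cannot work is false, and the M\"obius/quartic route you sketch is never carried out ("should produce"). When $p\equiv 1 \bmod 4$ the ratio $-1=i^2$ is a square, and (i) only needs a square ratio because it records the sign $\sigma(s)$. Taking $s=i$, $b=1$ gives $\varphi_2(-1)=\sigma(i)\varphi_2(1)$. Moreover $\sigma(i)=\sigma(2)$ because $2i=(1+i)^2$, which you had already written down. A fourth-power ratio would be needed only to get equality with no sign.

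\textbf{Part (iii): the genuine gap is the sign step.} Your reduction $\varphi_2(1)=2\,\mathrm{Re}\big(\chi(-1)J(\chi,\sigma)\big)$ is right; note $\sigma(x)=\chi(x^2)$, so the summand depends only on $u=x^2$. But the congruence $J(\chi,\sigma)\equiv -\chi(-1) \pmod{(1+i)^3}$ is false when $p\equiv 5 \bmod 8$:
\begin{itemize}
\item For $p=5$ and $\chi(2)=i$, one computes $J(\chi,\sigma)=1+2i$ and $\chi(-1)=-1$.
\item Then $J-1=2i$ has norm $4$, so it is not divisible by $(1+i)^3$, which has norm $8$.
\end{itemize}
The correct statement, for all $p\equiv 1 \bmod 4$, is $J(\chi,\sigma)\equiv -1 \pmod{(1+i)^3}$. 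This follows from the classical fact that $-\chi(-1)J(\chi,\chi)$ is primary, together with $J(\chi,\chi)=\chi(4)J(\chi,\sigma)$ and $\chi(4)=\chi(-1)=\sigma(2)$.

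Even with the right congruence, a congruence modulo $(1+i)^3$ does not by itself fix the real part modulo $4$. Writing $J=a+bi$, the condition $J\equiv -1$ allows both $(a,b)\equiv(-1,0)$ and $(a,b)\equiv(1,2) \bmod 4$. You need the extra input $b^2=p-a^2\equiv p-1 \pmod 8$. Hence $b\equiv 0 \bmod 4$ exactly when $p\equiv 1 \bmod 8$, that is, when $\chi(-1)=1$. In both cases this gives $\mathrm{Re}\big(\chi(-1)J\big)=\chi(-1)a\equiv -1 \bmod 4$, and so $\varphi_2(1)=2A_2(p)$.

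This argument also shows that the real part is the odd coordinate. Your "the real part is therefore $\pm A$" was premature, since a priori it could be $\pm B$.

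As written, your congruence combined with this unavoidable bookkeeping on $b$ would give $\varphi_2(1)\equiv 2 \bmod 8$ for $p\equiv 5\bmod 8$, which is the wrong sign. Your stated conclusion $\varphi_2(1)\equiv -2 \bmod 8$ is reached only through the invalid inference from $\chi(-1)J\equiv -1 \pmod{(1+i)^3}$ to $\mathrm{Re}\big(\chi(-1)J\big)\equiv -1 \bmod 4$.
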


\begin{lem}\label{lem: J2} The following hold:
\begin{itemize}
\item[(i)] $\psi_3(s^3c)=\sigma(s)\psi_3(c)$ for $s\in \Fp^*$;
\item[(ii)] if $p\equiv 2$ mod $3$ then $\psi_3(c)=0$;
\item[(iii)] if $p\equiv 1$ mod $3$, let $A=A_3(p)$ be the integer defined by writing $p=A^2+3B^2$ with $A\equiv -1$ mod $3$; then $\psi_3(1)=2A_3(p)$.
\end{itemize}
\end{lem}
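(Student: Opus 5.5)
Parts (i) and (ii) follow from changes of variable; part (iii) is the classical evaluation through Jacobi sums of the cubic character, and fixing its sign is the main obstacle. Since the paper presents this lemma as a digest of \cite[Chapter 3]{N}, one could simply cite that; the plan below reconstructs the argument.

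For part (i), I would substitute $x:=sx$ in $\psi_3(s^3c)$. Since $x\mapsto sx$ permutes $\Fp$, we get
\[\sum_{x}\sigma(s^3x^3+s^3c)=\sigma(s^3)\,\psi_3(c),\]
and $\sigma(s^3)=\sigma(s)$.

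For part (ii), I would use that when $p\equiv 2$ mod $3$ we have $\gcd(3,p-1)=1$, so cubing is a bijection of $\Fp$. Substituting $u=x^3$ turns $\psi_3(c)$ into $\sum_u\sigma(u+c)$, which vanishes by \eqref{eq: qjs1}.

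For part (iii), let $p\equiv 1$ mod $3$ and fix a cubic multiplicative character $\chi$ of $\Fp^*$. The steps, in order:

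First, I would use that for $u\in\Fp$ the number of $x$ with $x^3=u$ is $1+\chi(u)+\bar\chi(u)$, with the convention $\chi(0)=0$. This gives
\[\psi_3(1)=\sum_u\sigma(u+1)\big(1+\chi(u)+\bar\chi(u)\big)=\sum_u\sigma(u+1)\chi(u)+\overline{\sum_u\sigma(u+1)\chi(u)},\]
because the term with the constant $1$ vanishes. Here $\sigma$ is real, so the $\bar\chi$ term is the complex conjugate of the $\chi$ term.

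Second, I would substitute $u:=-u$ and use $\chi(-1)=\chi((-1)^3)=\chi(-1)^3=1$, so $\chi(-1)=1$. This identifies the sum with the Jacobi sum $J=J(\sigma,\chi)=\sum_u\sigma(1-u)\chi(u)$. Hence $\psi_3(1)=2\,\mathrm{Re}\,J$.

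Third, I would invoke the standard facts about $J$. It lies in $\Z[\omega]$ with $\omega=e^{2\pi i/3}$, and $|J|^2=p$, via $J=g(\sigma)g(\chi)/g(\sigma\chi)$ and $|g|^2=p$ for Gauss sums. Writing $J=a+b\omega$ gives
\[p=a^2-ab+b^2=(a-b/2)^2+3(b/2)^2.\]
Since $p$ is odd, $b$ must be even. With $A=a-b/2=\mathrm{Re}\,J$ this yields $p=A^2+3B^2$, so $\psi_3(1)=2A$ up to the sign of $A$. The sign is the main obstacle.

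Finally, for the sign I would first try a congruence obtained directly from the definition of $J$. Every value $\chi(u)$ with $u\neq 0$ is a cube root of unity, hence $\equiv 1$ mod $(1-\omega)$. Counting terms, and noting that the $u=0$ and $u=1$ terms vanish, gives
\[J\equiv\sum_{u\neq 0,1}\sigma(1-u)=-\sigma(1)=-1 \pmod{1-\omega}.\]
If that proves too weak, I would instead use the relation $J(\sigma,\chi)=\chi(4)J(\chi,\chi)$ together with the classical normalization $J(\chi,\chi)\equiv -1$ mod $3$ in $\Z[\omega]$. Either way, the target is to deduce $A\equiv -1$ mod $3$, which matches the normalization of $A_3(p)$.

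A cleaner alternative for the sign is to test the residue directly. Reducing $\psi_3(1)$ modulo $3$ via the point count on $y^2=x^3+1$, where the $3$-torsion points $(0,\pm1)$ force $\#E(\Fp)=p+1-\psi_3(1)$... I would instead just check that this count is $\equiv 0$ mod $3$, i.e. $p+1\equiv\psi_3(1)$ mod $3$. As $p\equiv 1$, this gives $\psi_3(1)\equiv 2$, hence $2A\equiv 2$, so $A\equiv 1$... This sign bookkeeping is the delicate step, and it is the one I would verify numerically against small primes before fixing the convention. For instance, $p=7=2^2+3$ should give $A=2\equiv -1$ mod $3$.
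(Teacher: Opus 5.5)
Parts (i) and (ii) are correct, and the Jacobi-sum route to (iii) is the standard one; the paper gives no proof of its own and just quotes \cite[Chapter 3]{N}. Your argument for (iii), however, has two genuine gaps.

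First, the parity step is false as justified. That $p$ is odd does not force $b$ to be even, since $a^2-ab+b^2$ is odd as soon as $a$ or $b$ is odd; for example, $3+\omega$ has norm $7$ with $b=1$. Without $b$ even, $\mathrm{Re}\,J=a-b/2$ need not be an integer, and you only get $4p=(2a-b)^2+3b^2$. The missing input is a congruence for $J$ modulo $2$, obtained by the same term-counting you used modulo $1-\omega$. For $u\neq 0,1$ one has $\sigma(1-u)\equiv 1 \pmod 2$, hence $J\equiv\sum_{u\neq 0,1}\chi(u)=-1 \pmod 2$. This gives $a$ odd and $b$ even.

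Second, you never actually fix the sign, and the alternative you settle on contains an error. The number of points on $y^2=x^3+1$ over $\Fp$ is $p+1+\psi_3(1)$, not $p+1-\psi_3(1)$. That sign error is exactly what produced $A\equiv 1$, contradicting your own check at $p=7$.

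Your first congruence was not too weak. Once $b$ is even, $J\equiv -1 \pmod{1-\omega}$ says $a+b\equiv -1 \pmod 3$. Since $-1/2\equiv 1 \pmod 3$, you have $A=a-b/2\equiv a+b \pmod 3$, so $A\equiv -1 \pmod 3$.

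More directly, since cubing is $3$-to-$1$ on $\Fp^*$,
\[\psi_3(1)=1+3\sum_{u\in(\Fp^*)^3}\sigma(u+1)\equiv 1 \pmod 3,\]
so $2A\equiv 1$, i.e.\ $A\equiv -1 \pmod 3$. The same formula also settles parity. The inner sum has $(p-1)/3-1$ nonzero terms (the term $u=-1$ vanishes, $-1$ being a cube), and this is an odd number because $p\equiv 1 \bmod 6$. Hence $\psi_3(1)$ is even and $\mathrm{Re}\,J\in\Z$.

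Finally, identifying $\mathrm{Re}\,J$ with $A_3(p)$ uses the fact that $p=A^2+3B^2$ determines $A$ up to sign. This is implicit in the paper's definition of $A_3(p)$, but you should state it.
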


\section{Degree distributions: the Fricke family} 
Recall, the Fricke family is defined by $C=0$; the corresponding generalized Markoff equation is \eqref{eq: G0}. We discard the parameter value $D=4$, which yields the Cayley cubic. This is the only intersection of the Fricke family with the Cayley family.

Before stating the next counting lemma, it is useful to introduce the following quadratic character sum, depending on a parameter $a\in \Fp$:
\begin{align}\label{eq: Sa}
S(a)=\sum_{x\in \Fp}\sigma(x^3+ax^2+x).
\end{align}

\begin{countlem}\label{lem: Frickefam} Consider the Fricke family of cubics $\M(0,D)$, where $D\neq 4$. Then the $N_k$ counts are given as follows:
\begin{align*}
N_0&= p^2+1+\big(3+\sigma(D)\big)\sigma(D-4)\cdot p, \\
N_1&= p-3+\sigma(D-4)\cdot S\bigg(\frac{2(D+4)}{D-4}\bigg),\\
N_2&=1+\sigma(D),\\
N_3&=\llbracket D=0\rrbracket.
\end{align*}
\end{countlem}

\begin{proof} We are in the non-Cayley case. The Cayley level for $C=0$ is $D_\bullet=4$. 

By Theorem~\ref{thm: v}(ii), 
\[N_0=p^2+1+\big(1+N_0(\kappa)\big)\sigma(D-4)\cdot p\] 
where $\kappa(x)=x^3-2x^2-Dx+2D=(x-2)(x^2-D)$. Note that $N_0(\kappa)=2+\sigma(D)$.

The formula for $N_2$ draws from Theorem~\ref{thm: N2}(ii), up to the following rewriting. We have $\lambda(x)=x^3-(D-4)x^2+16(x-(D-4))^2$. The shift $x:=x+(D-4)$ gives
\[\sum_{x\in \Fp}\sigma(\lambda(x))=\sum_{x\in \Fp}\sigma\Big(x^3+2(D+4)x^2+(D-4)^2x\Big).\]
In the latter sum we rescale $x:=(D-4)x$. We get the desired rewriting
\[\sum_{x\in \Fp}\sigma(\lambda(x))=\sigma(D-4)\cdot S\bigg(\frac{2(D+4)}{D-4}\bigg).\]

By Theorem~\ref{thm: N2}(ii), $N_2=N_0(\kappa)-1=1+\sigma(D)$.

By Theorem~\ref{thm: nodes}, $N_3=0$ unless the Fricke cubic happens to belong to the $K$ family as well, in which case $N_3=1$. The only cubic common to both families, other than the already discarded Cayley cubic, is the Markoff cubic occurring at $D=0$. Thus $N_3=\llbracket D=0\rrbracket$.
\end{proof}

The number of vertices in the Vieta graph of a Fricke cubic is explicitly given by $N_0$. In order to obtain the number of deficient vertices, or the degree distribution, we need to evaluate a quadratic character sum of the form $S(a)$. The following lemma offers some partial answers in terms of the Jacobsthal sum $\varphi_2(1)$--which, we recall, has an explicit evaluation.

\begin{lem}\label{lem: Sa}
Let $S(a)$ be defined as in \eqref{eq: Sa}. Then the following hold:
\begin{itemize}
\item[(i)] $S(-a)=\sigma(-1)S(a)$; 
\item[(ii)] $S(0)=\varphi_2(1)$;
\item[(iii)] $S(2)=-\sigma(-1)$;
\item[(iv)] $S(6)=\sigma(2)\varphi_2(1)$.
\end{itemize}
\end{lem}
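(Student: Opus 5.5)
Each of the four identities should follow from an elementary substitution in the sum $S(a)=\sum_x\sigma(x^3+ax^2+x)$, combined with the facts about $\varphi_2$ in Lemma~\ref{lem: J1}.

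For (i), substitute $x:=-x$. This gives $\sigma(-x^3+ax^2-x)=\sigma(-1)\,\sigma(x^3-ax^2+x)$, so $S(a)=\sigma(-1)S(-a)$, which is the claim since $\sigma(-1)^2=1$.

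Part (ii) is immediate from the definitions, because $S(0)=\sum_x\sigma(x^3+x)=\varphi_2(1)$.

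For (iii), factor $x^3+2x^2+x=x(x+1)^2$. The term $x=-1$ contributes $0$, and every other term equals $\sigma(x)$. Hence
\[
S(2)=\sum_{x\neq -1}\sigma(x)=-\sigma(-1),
\]
using $\sum_x\sigma(x)=0$.

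Part (iv) is the only step with real content, and I expect it to be the main obstacle: we have to convert a cubic with a nonzero $x^2$ term into the Jacobsthal shape $x^3+bx$. The plan is to use the reciprocal trick from Lemma~\ref{lem: q2c}, followed by completing the square in a multiplicative way.

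1. Since the summand vanishes at $x=0$, we sum over $x\neq0$ and write
\[
x^3+6x^2+x=x^2\Big(x+6+\frac1x\Big),
\]
so that $\sigma(x^3+6x^2+x)=\sigma\big(x+\tfrac1x+6\big)$.
2. The substitution $x\mapsto 1/x$ leaves $x+1/x$ invariant. Alternatively, note that $x+1/x+6=(x+1)^2/x+4$, or $(x-1)^2/x+8$.
3. Set $x=t^2$ to pass to a genuine Jacobsthal form. Counting multiplicities this way is awkward, so I would rather transform directly.
4. The root discriminant of $x^2+6x+1$ is $32=2\cdot 16$, so over $\Fp$ with $\sigma(2)=1$ the cubic splits into three linear factors; the $\sigma(2)$ in the answer suggests that this factor is what separates the cases.

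Concretely, I would apply the Möbius substitution $x=\frac{u-1}{u+1}$, for $u\neq -1$, which sends $0,\pm1$ to $-1,0,\infty$. We compute
\[
(x+1)^2+4x=\frac{4u^2+4(u^2-1)}{(u+1)^2}=\frac{8u^2-4}{(u+1)^2}.
\]
Then
\[
x\big((x+1)^2+4x\big)=\frac{4(u-1)(2u^2-1)}{(u+1)^3},
\]
and multiplying by the square $(u+1)^4$ shows that the summand has the same character as $\sigma\big((u-1)(u+1)(2u^2-1)\big)=\sigma\big((u^2-1)(2u^2-1)\big)$. This is a quartic in $u^2$.

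This suggests a cleaner route. First try $x:=x^2$-type manipulations, or instead use the classical identity that for $y=x+1/x$ one has $\sum_x\sigma(x^3+ax^2+x)=\sum_y\sigma(y+a)\big(1+\sigma(y^2-4)\big)$. Then $S(6)=\sum_y\sigma(y+6)\,\sigma(y^2-4)$, since $\sum_y\sigma(y+6)=0$. This equals the cubic sum $\sum_y\sigma\big((y+6)(y-2)(y+2)\big)$. Shifting $y:=y-2$ gives $\sum_y\sigma\big(y(y-4)(y+4)\big)=\sum_y\sigma(y^3-16y)$. This is $\varphi_2(-16)=\varphi_2(-1)=\sigma(2)\varphi_2(1)$ by Lemma~\ref{lem: J1}(i) and (iv).

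The one point requiring care is the counting identity for $y=x+1/x$. The equation $x^2-yx+1=0$ has $1+\sigma(y^2-4)$ solutions $x$, all nonzero, which justifies it. The same identity also re-proves (iii) as a check: there $\sum_y\sigma\big((y+2)^2(y-2)\big)=-\sigma(-4)=-\sigma(-1)$.
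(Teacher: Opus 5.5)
Your proof is correct, and once you drop the exploratory detours (steps 1--4 and the M\"obius substitution), it is essentially the paper's argument. Parts (i)--(iii) match the paper's. For (iv), the paper likewise counts the $1+\sigma(y^2-4)$ nonzero roots of $x^2-yx+1=0$ (in its normalization $y\mapsto y-6$) to rewrite $S(6)$ as $\sum_y\sigma\big(y(y^2-12y+32)\big)$, which is your cubic $(y+6)(y^2-4)$ after a shift. The only cosmetic difference is that the paper rescales and then shifts to reach $\sum_y\sigma(y^3-y)=\varphi_2(-1)$, whereas you shift to $\varphi_2(-16)$ and invoke Lemma~\ref{lem: J1}(i) before (iv).
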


The upshot of this lemma is that $S(a)$ is explicitly computable for $a=0, \pm 2, \pm 6$.

\begin{proof}
(i) Immediate by using the sign change $x:=-x$. (ii) True by notation. (iii) This is easy:
\begin{align*}
S(2)=\sum_{x\in \Fp} \sigma(x^3+2x^2+x)=\sum_{x\in \Fp} \sigma\big(x(x+1)^2\big)=\sum_{x\neq -1} \sigma(x)=-\sigma(-1).
\end{align*}
 (iv) This identity is more involved. For the reader's benefit we give a fairly self-contained argument, and we refer the reader to \cite[pp.93-96, p.117]{N} for more context. We have
 \begin{align*}
 S(6)=\sum_{x\in \Fp^*} \sigma\big(x(x^2+6x+1)\big)=\sum_{x\in \Fp^*} \sigma\big(x^{-1}(x^2+6x+1)\big)=\sum_{y\in \Fp} \sigma(y)\cdot N(y)
  \end{align*}
  where $N(y)$ is the number of solutions $x\in \Fp^*$ to $y=x^{-1}(x^2+6x+1)$. This equation amounts to the quadratic $x^2+(6-y)x+1=0$, which has $1+\sigma\big((6-y)^2-4\big)$ (non-zero) solutions in $\Fp$. Thus $N(y)=1+\sigma(y^2-12y+32)$. We continue:
\begin{align*}
S(6)=\sum_{y\in \Fp} \sigma(y) \Big(1+\sigma(y^2-12y+32)\Big)=\sum_{y\in \Fp} \sigma\big(y(y^2-12y+32)\big).
\end{align*}
The rescaling $y:=4y$, followed by the shift $y:=y+1$ gives
\begin{align*}
S(6)=\sum_{y\in \Fp} \sigma(y^3-3y^2+2y)=\sum_{y\in \Fp} \sigma(y^3-y)=\varphi_2(-1).
\end{align*}
Finally, we recall from Lemma~\ref{lem: J1} (iv) that $\varphi_2(-1)=\sigma(2)\varphi_2(1)$.
\end{proof}

Imposing $\dfrac{2(D+4)}{D-4}=0, \pm 2, \pm 6$ amounts to four parameter values: $D=-4$, $D=0$, $D=2$, and $D=8$. The $N_k$ counts can therefore be worked out explicitly for these values. We remark that the computations of $N_1$ is also discussed in \cite[Example 5.15]{N}.

We deduce the following.

\begin{thm}
Consider the Vieta graph of a Fricke cubic $\M(0,D)$, where $D=-4, 0, 2, 8$. The total number of vertices, the number of deficient vertices, and the degree distribution for the deficient vertices are given in Tables \ref{T4} and \ref{T5} below.

\begin{table}[ht]\renewcommand{\arraystretch}{1.2}
 \begin{tabular}{| c || c | c |} 
  \hline
 &  $D=0$ &  $D=-4$ \\
 \hline
\rule{0pt}{3ex}total & \:$p^2+1+3\sigma(-1)p$\:  & $p^2+1+\big(3+\sigma(-1)\big)\sigma(-2)p$ \\
\rule{0pt}{2.5ex}deficient &  $3p-11-3\sigma(-1)$ & $3\big(p-4-\sigma(-1)+\sigma(-2)\varphi_2(1)\big)$ \rule[-1.5ex]{0pt}{0pt}  \\ \hline
\rule{0pt}{2.5ex}degree $2$ & $3\big(p-4-\sigma(-1)\big)$  & \:$3\big(p-5-2\sigma(-1)+\sigma(-2)\varphi_2(1)\big)$\:  \\
\rule{0pt}{1.5ex}degree $1$ & $0$  & $3\big(1+\sigma(-1)\big)$ \\
\rule{0pt}{1.5ex}degree $0$ &  $1$  & $0$ \\
 \hline
 \end{tabular} \bigskip\caption{Degree distribution in Vieta graphs for the Fricke cubics $\M(0,0)$ and $\M(0,-4)$.}\label{T4}
\end{table}

\begin{table}[ht]\renewcommand{\arraystretch}{1.2}
 \begin{tabular}{| c || c | c |} 
  \hline
 &  $D=2$ &  $D=8$ \\
 \hline
\rule{0pt}{2.5ex}total & \:$p^2+1+\big(3+\sigma(2)\big)\sigma(-2)p$\:  & $p^2+1+\big(3+\sigma(2)\big)p$ \\
\rule{0pt}{2.5ex}deficient &  $3\big(p-4-\sigma(2)+\varphi_2(1)\big)$ & $3\big(p-4-\sigma(2)+\sigma(2)\varphi_2(1)\big)$ \rule[-1.5ex]{0pt}{0pt}  \\ \hline
\rule{0pt}{2.5ex}degree $2$ & \:$3\big(p-5-2\sigma(2)+\varphi_2(1)\big)$\:  & $3\big(p-5-2\sigma(2)+\sigma(2)\varphi_2(1)\big)$  \\
\rule{0pt}{1.5ex}degree $1$ & $3\big(1+\sigma(2)\big)$  & $3\big(1+\sigma(2)\big)$ \\
\rule{0pt}{2ex}degree $0$ &  $0$  & $0$ \\
 \hline
 \end{tabular} \bigskip\caption{Degree distribution in Vieta graphs for the Fricke cubics $\M(0,2)$ and $\M(0,8)$.}\label{T5}
\end{table}
\end{thm}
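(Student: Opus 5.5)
The plan is to specialize Counting Lemma~\ref{lem: Frickefam} to the four values $D=0,-4,2,8$. We then evaluate the single non-explicit term $S\big(2(D+4)/(D-4)\big)$ with Lemma~\ref{lem: Sa}, and convert the resulting $N_k$ counts into degree counts with the dimension-$3$ table: deficient $=3N_1-3N_2+N_3$, degree $2$ $=3(N_1-2N_2+N_3)$, degree $1$ $=3(N_2-N_3)$, degree $0$ $=N_3$.

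First, some preliminary checks. Since $p\geq 5$, each of the four values satisfies $D\neq 4$, so the Counting Lemma applies. Also $D-4\in\{-4,-8,-2,4\}$ is nonzero, so the argument $a=2(D+4)/(D-4)$ is defined. It takes the values
\[
a=-2 \ (D=0),\qquad a=0\ (D=-4),\qquad a=-6\ (D=2),\qquad a=6\ (D=8).
\]
Lemma~\ref{lem: Sa}, together with part (i) for the negative arguments, then gives:
\begin{itemize}
\item $S(-2)=\sigma(-1)S(2)=-1$;
\item $S(0)=\varphi_2(1)$;
\item $S(-6)=\sigma(-1)\sigma(2)\varphi_2(1)$;
\item $S(6)=\sigma(2)\varphi_2(1)$.
\end{itemize}

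Next we multiply each value by $\sigma(D-4)$, which equals $\sigma(-1)$, $\sigma(-2)$, $\sigma(-2)$ and $1$ in the four cases. This yields
\[
N_1=p-3-\sigma(-1),\quad p-3+\sigma(-2)\varphi_2(1),\quad p-3+\varphi_2(1),\quad p-3+\sigma(2)\varphi_2(1)
\]
respectively. For $D=2$ we use $\sigma(-2)\sigma(-1)\sigma(2)=1$, and for $D=8$ we use $\sigma(4)=1$.

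The remaining counts come straight from the lemma. The vertex count is $N_0=p^2+1+(3+\sigma(D))\sigma(D-4)p$; for example, $D=-4$ gives $(3+\sigma(-1))\sigma(-2)$ and $D=8$ gives $3+\sigma(2)$. We also have $N_2=1+\sigma(D)$, which equals $1$, $1+\sigma(-1)$, $1+\sigma(2)$ and $1+\sigma(2)$ in the four cases. Finally $N_3=\llbracket D=0\rrbracket$.

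Substituting into the table gives every entry. For instance, for $D=-4$ the degree-$2$ count is
\[
3\big(p-3+\sigma(-2)\varphi_2(1)-2-2\sigma(-1)\big)=3\big(p-5-2\sigma(-1)+\sigma(-2)\varphi_2(1)\big).
\]
For $D=0$ we have $N_2=N_3=1$, so there are no degree-$1$ vertices. This also agrees with the $K$-family table at $K=0$, which gives a useful consistency check.

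There is no real obstacle here. The only delicate point is tracking the quadratic signs, namely the factor $\sigma(D-4)$ combined with the $\sigma(-1)$ from Lemma~\ref{lem: Sa}(i). It is easiest to verify these products case by case, as above.
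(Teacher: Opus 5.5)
Your proposal is correct and takes the same route as the paper: specialize Counting Lemma~\ref{lem: Frickefam} at $D=0,-4,2,8$, where $2(D+4)/(D-4)=-2,0,-6,6$; evaluate $S$ via Lemma~\ref{lem: Sa}, using part (i) for the negative arguments; and convert the resulting $N_k$ counts with the dimension-$3$ table. Your sign bookkeeping checks out, for instance $S(-2)=-1$ for $D=0$ and $\sigma(-2)\cdot\sigma(-1)\sigma(2)=1$ for $D=2$, and every table entry follows from it.
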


The case $D=0$, the Markoff cubic, is already tabulated in Table \ref{T3} under $K=0$. We include it here in order to see that, by comparison, the degree distribution in the other three cases, $D=-4, 2, 8$, is more complicated. We also note that in the cases $D=-4, 2, 8$, the counts for vertices of degree $0$ or $1$  are in agreement with part (i) of Corollary~\ref{cor: no01}.

The counts in Tables \ref{T4} and \ref{T5} could be made even more explicit by splitting further into two cases: $p\equiv 3$ mod $4$, in which case $\varphi_2(1)=0$ and $\sigma(-1)=-1$, respectively $p\equiv 1$ mod $4$, in which case $\varphi_2(1)=2A_2(p)$ and $\sigma(-1)=1$.

\begin{figure}[ht]
    \hspace*{-0.5cm} 
       \includegraphics[width=1.05\textwidth]{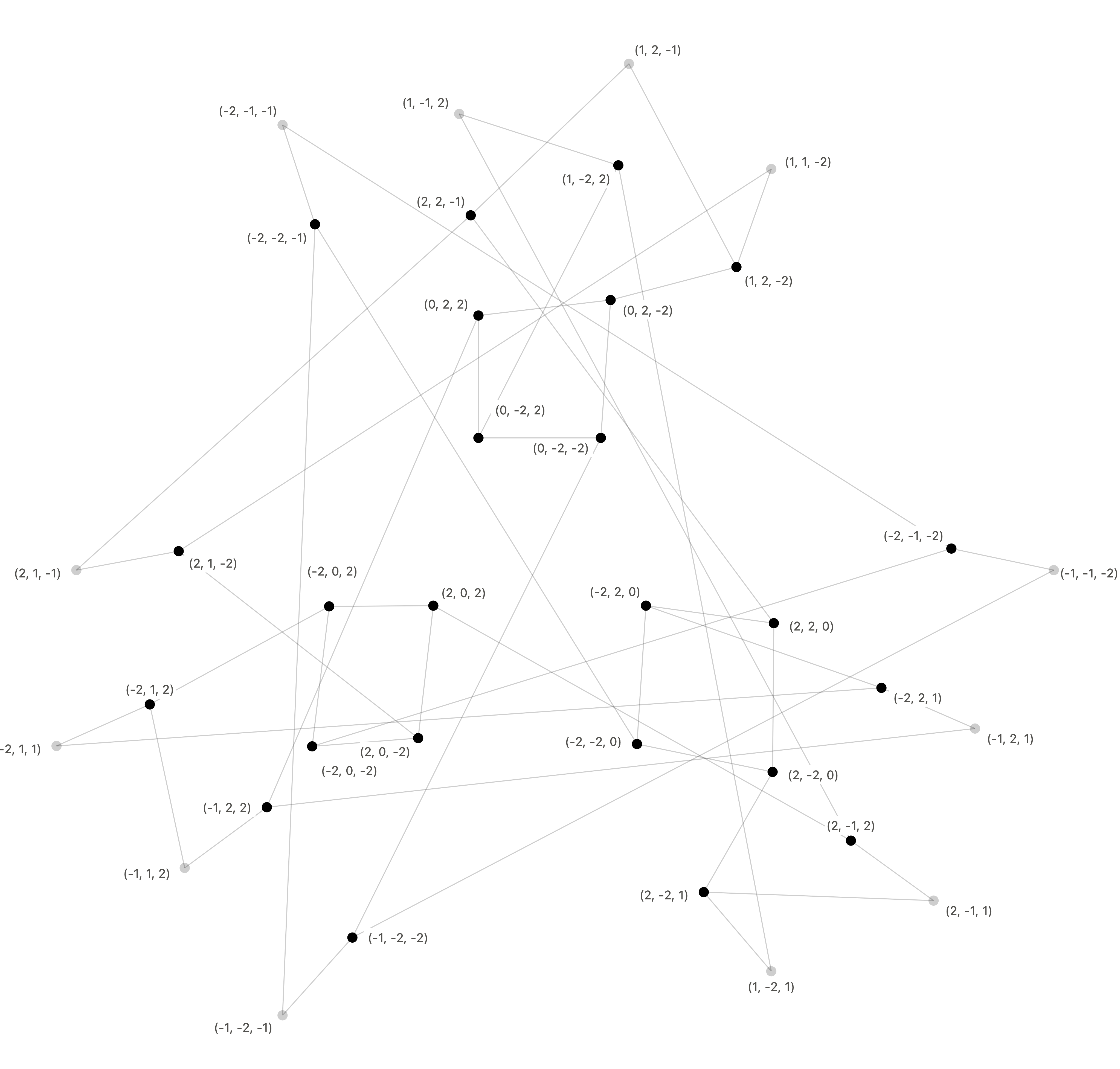}
    \caption{The Vieta graph of the Fricke cubic $\M(0,8)$ over $\F_{\!5}$. This is a connected graph on $36$ vertices; there are $12$ deficient vertices, all of degree $2$ (highlighted in light grey).}
    \label{fig: C0_D8}
\end{figure}

\section{Degree distributions: the $J$ family}
The $J$ family is a one-parameter family of generalized Markoff equations that arise by imposing that the main cubic $\lambda(x)$ is, up to a variable shift, of the form $x^3+c$. Now, a general cubic $x^3+a_2x^2+a_1x+a_0$ is, up to a variable shift, of the form $x^3+c$, if and only if $a_2^2=3a_1$. Straightforward algebra leads to the fact that, in the case of our main cubic $\lambda(x)$, the latter happens if and only if $C$ and $D$ are parameterized by
\begin{align}\label{eq: J}
C=3J^2-4, \quad D=9J^3-36J-28
\end{align}
for $J\in \Fp$. (We remark that one could pursue the related question, as to when $\lambda(x)$ is, up to a variable shift, of the form $x^3+bx$. The algebraic condition on the coefficients of $\lambda(x)$ is, however, more complicated. One can find parametric solutions, but no complete description such as \eqref{eq: J} seems available.) 

For $C$ and $D$ given by \eqref{eq: J}, we compute 
\begin{align}\label{eq: Jdiff}
4(D-D_\bullet)=9(J-2)(J+2)^3.
\end{align}
In particular, the $J$ family meets the Cayley family $D=D_\bullet$ when $J=\pm 2$; the common cubic is $\M(8,-28)$. The $J$ family meets the $K$ family when $J=\pm 2$ (correspondingly, $K=-1$) or $J=-1$ (correspondingly, $K=2$); the common cubics are $\M(8,-28)$ and $\M(-1,-1)$. In the real picture below, the trace of the $J$ family runs strikingly close to the trace of the $K$ family between the intersection points $(-1, -1)$ and $(8,-28)$. We clarify that, despite appearances, the Markoff cubic at $(0,0)$  does not belong to the $J$ family. 

\begin{figure}[ht]
    \centering
    \includegraphics[width=0.90\textwidth]{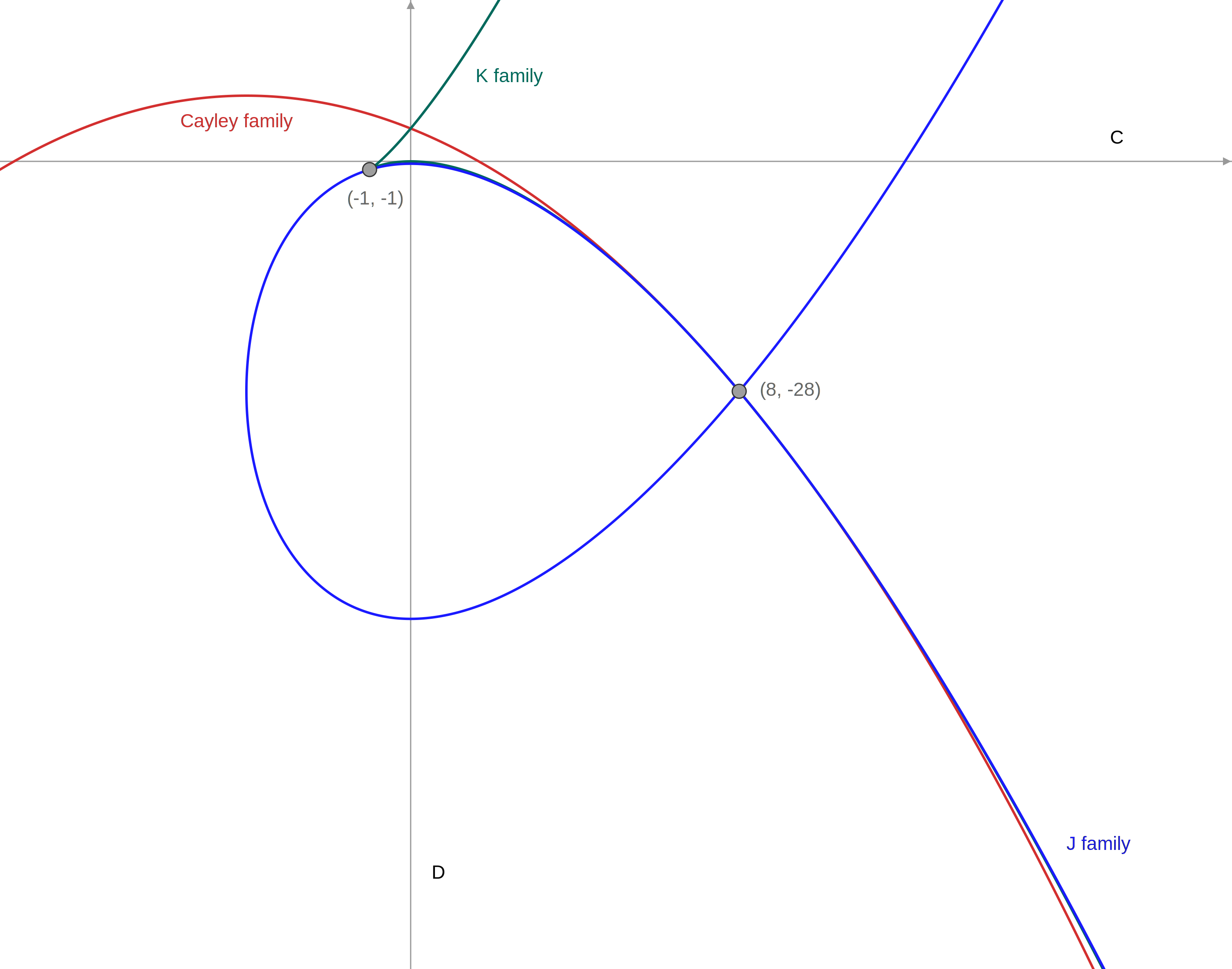}
    \caption{The $J$ family meets the Cayley family and the $K$ family in the real $(C,D)$ plane. Note: the axes are not equally scaled.}
    \label{fig: CKJ}
\end{figure}

\begin{countlem}\label{lem: Jfam} Consider the $J$ family of cubics, where $J\neq \pm 2, -1$. Then the $N_k$ counts are given as follows:
\begin{align*}
N_0&= p^2+1+\big(1+N_0(\lambda)\big)\sigma(J^2-4)\cdot p, \\
N_1&= p-3+\sigma\big(3(J-2)\big)\psi_3\bigg(\frac{J+1}{J-2}\bigg),\\
N_2&=N_0(\lambda)-\llbracket 3J^2-4=0\rrbracket,\\
N_3&=0,
\end{align*}
where 
\begin{align}\label{eq: Jlam}
N_0(\lambda)=\begin{cases}
1 & \textrm{ if } p\equiv 2 \textrm{ mod }3,\\
3\left\llbracket \dfrac{J+1}{J-2}\in (\Fp^*)^3\right\rrbracket & \textrm{ if } p\equiv 1 \textrm{ mod }3.
\end{cases}
\end{align}
\end{countlem}

\begin{proof} We are in the non-Cayley case $D\neq D_\bullet$, since $J\neq \pm 2$. In this proof, we only work with the main cubic $\lambda(x)$; we recall that $N_0(\kappa)=N_0(\lambda)$, by Lemma~\ref{lem: 67}. Using \eqref{eq: Jdiff}, we see that
\[\lambda(x)=x^3-9(J-2)(J+2)^2x^2+27(J-2)^2(J+2)^4x+81(J-2)^2(J+2)^6.\]
Rescaling the variable leads to
\begin{align*}
\frac{\lambda\big((J-2)(J+2)^2x\big)}{\big((J-2)(J+2)^2\big)^3}=x^3-9x^2+27x+\frac{81}{J-2}=(x-3)^3+27\cdot \frac{J+1}{J-2}.
\end{align*}
We note that the latter term is non-zero. 

This formula gives two pieces of information. Firstly, $N_0(\lambda)$ equals the number of roots to the cubic equation $x^3= \dfrac{J+1}{J-2}$; whence formula \eqref{eq: Jlam}. Secondly, by a change of variable $x:=(J-2)(J+2)^2x$, we can evaluate the character sum
\begin{align*}
\sum_{x\in \Fp} \sigma(\lambda(x))&= \sigma\Big(\big((J-2)(J+2)^2\big)^3\Big) \sum_{x\in \Fp} \sigma\bigg((x-3)^3+27\cdot \frac{J+1}{J-2}\bigg)\\
&= \sigma(J-2) \sum_{x\in \Fp}\sigma\bigg(x^3+27\cdot \frac{J+1}{J-2}\bigg)=\sigma\big(3(J-2)\big) \psi_3\bigg(\frac{J+1}{J-2}\bigg).
\end{align*}

The $N_k$ counts are now easily obtained. The formula for $N_0$ comes from Theorem~\ref{thm: v}(ii) and Lemma~\ref{lem: 67}. We note that $\sigma(D-D_\bullet)=\sigma(J^2-4)$, by \eqref{eq: Jdiff}.

The formula for $N_1$ comes from Theorem~\ref{thm: N1}(ii), coupled with the above evaluation of the associated character sum. The formula for $N_2$ comes from Theorem~\ref{thm: N2}(ii). Finally, by Theorem~\ref{thm: nodes} we have $N_3=0$ since we have discarded any overlaps with the Cayley family or the $K$ family.
\end{proof}

In presenting the degree distributions, we find it convenient to separate according to the value of $p$ mod $3$. In the case $p\equiv 2$ mod $3$ the outcome is especially simple.

\begin{thm}
Let $p\equiv 2$ mod $3$. Consider the Vieta graph of a cubic in the $J$ family, where $J\neq \pm 2, -1$. The total number of vertices, the number of deficient vertices, and the degree distribution for the deficient vertices are given in Table \ref{T6} below.

\begin{table}[ht]\renewcommand{\arraystretch}{1.2}
 \begin{tabular}{| c || c | c |} 
  \hline
 &  $J^2\neq 4/3$ &  $J^2=4/3$   \\
 \hline
 \rule{0pt}{2.5ex}total & \:$p^2+1+2\sigma(J^2-4)p$\:  & \:$p^2+1-2\sigma(2)p$\: \\
deficient &  $3(p-4)$ & $3(p-3)$  \\ \hline
degree $2$ & $3(p-5)$  & $3(p-3)$  \\
degree $1$ & $3$  & $0$ \\
degree $0$ &  $0$  & $0$ \\
 \hline
 \end{tabular} \bigskip\caption{Degree distribution in the Vieta graphs for cubics in the $J$ family, when $p\equiv 2$ mod $3$.}\label{T6}
\end{table}
\end{thm}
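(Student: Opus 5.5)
The plan is to specialize Counting Lemma~\ref{lem: Jfam} to $p\equiv 2$ mod $3$ and then read off the degree distribution from the table of degree counts in dimension $3$, namely deficient $=3N_1-3N_2+N_3$, degree $2$ $=3(N_1-2N_2+N_3)$, degree $1$ $=3(N_2-N_3)$, degree $0$ $=N_3$.

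The first step is to evaluate the $N_k$ counts.
\begin{itemize}
\item Since $p\equiv 2$ mod $3$, formula \eqref{eq: Jlam} gives $N_0(\lambda)=1$. Hence $N_0=p^2+1+2\sigma(J^2-4)p$.
\item Lemma~\ref{lem: J2}(ii) gives $\psi_3\big((J+1)/(J-2)\big)=0$; the argument is nonzero since $J\neq -1$. Hence $N_1=p-3$.
\item $N_2=1-\llbracket 3J^2=4\rrbracket$.
\item $N_3=0$.
\end{itemize}

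In the generic column $J^2\neq 4/3$ we have $N_2=1$. This yields deficient $=3(p-3)-3=3(p-4)$, degree $2$ $=3(p-5)$, degree $1$ $=3$, and degree $0$ $=0$, as in Table~\ref{T6}.

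In the column $J^2=4/3$ we have $N_2=0$. This gives deficient $=3(p-3)$, degree $2$ $=3(p-3)$, and no vertices of degree $0$ or $1$.

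It remains to simplify the vertex count in this column. We have $J^2-4=4/3-4=-8/3$, so $\sigma(J^2-4)=\sigma(-2)\sigma(3)\cdot\sigma(4)$. By quadratic reciprocity, $\sigma(-3)=1$ exactly when $p\equiv 1$ mod $3$. So for $p\equiv 2$ mod $3$ we get $\sigma(-3)=-1$, hence $\sigma(3)=-\sigma(-1)$. Therefore $\sigma(-8/3)=\sigma(-2)\sigma(3)=-\sigma(-2)\sigma(-1)=-\sigma(2)$, which gives $N_0=p^2+1-2\sigma(2)p$.

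The only non-mechanical point is this last character identity, which relies on the supplementary law for $\sigma(-3)$. One should also note that $J^2=4/3$ can only occur when $3$ is a square, which is consistent with the computation and does not affect the argument.
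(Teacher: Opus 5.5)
Your proposal is correct and follows the paper's proof. The paper likewise specializes Counting Lemma~\ref{lem: Jfam} using $N_0(\lambda)=1$ and the vanishing of $\psi_3$ for $p\equiv 2$ mod $3$, reads off the degree counts from the dimension-$3$ table, and uses $\sigma(-3)=-1$ to get $\sigma(-8/3)=-\sigma(2)$ in the column $J^2=4/3$.
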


\begin{proof}
The degree counts follow from the $N_k$ counts of the previous lemma, keeping in mind that $N_0(\lambda)=1$ and the Jacobsthal sum $\psi_3$ vanishes. When $J^2=4/3$, we have $\sigma(J^2-4)=\sigma(-8/3)=-\sigma(2)$, as $\sigma(-3)=-1$ whenever $p\equiv 2$ mod $3$.
\end{proof}

Two quick comments are in order. Firstly, $J^2=4/3$ has a solution in $\Fp$ if and only if $\sigma(3)=1$; when $p\equiv 2$ mod $3$, this is the case if and only if $p\equiv 3$ mod $4$. Secondly, if $J^2=4/3$, then the lack of vertices of degree $0$ or $1$ agrees with part (i) of Corollary~\ref{cor: no01}. Indeed, $C=0$ and $D=9J(J^2-4)-28=-24J-28=-12(J+1)^2$; now $\sigma(D)=\sigma(-3)=-1$.

The case $p\equiv 1$ mod $3$ can be spelled out as well, but we prefer to highlight a special subfamily.

\begin{thm}
Let $p\equiv 1$ mod $3$. Assume $J\neq \pm 2, -1$, $J^2\neq 4/3$ and $\dfrac{J+1}{J-2}=T^3$ for some $T\in \Fp^*$. For the Vieta graph of the corresponding $J$ cubic, the total number of vertices, the number of deficient vertices, and the degree distribution for the deficient vertices are given in Table \ref{T7} below.
\begin{table}[ht]\renewcommand{\arraystretch}{1.2}
 \begin{tabular}{| c || c |} 
 \hline
\rule{0pt}{2.5ex}total & $p^2+1+4\sigma(1-4T^3)p$  \\
\rule{0pt}{2ex}deficient &  \:$3\big(p-6+2\sigma(T^4-T)A_3(p)\big)$\: \rule[-1.5ex]{0pt}{0pt}   \\ \hline
\rule{0pt}{2.5ex}degree $2$ & $3\big(p-9+2\sigma(T^4-T)A_3(p)\big)$  \\
degree $1$ & $9$   \\
degree $0$ &  $0$  \\
 \hline
 \end{tabular} \bigskip\caption{Degree distribution in the Vieta graphs for cubics in a special $J$ subfamily, when $p\equiv 1$ mod $3$.}\label{T7}
\end{table}
\end{thm}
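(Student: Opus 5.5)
The plan is to specialize Counting Lemma~\ref{lem: Jfam} to the subfamily where $\frac{J+1}{J-2}=T^3$ is a nonzero cube. Then the table follows from the dimension-$3$ conversion table of $N_k$ counts into degree counts.

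First, the root count and the counts $N_2$, $N_3$. Since $p\equiv 1$ mod $3$ and $\frac{J+1}{J-2}\in(\Fp^*)^3$, formula \eqref{eq: Jlam} gives $N_0(\lambda)=3$. Because $J^2\neq 4/3$, we have $C=3J^2-4\neq 0$, so $N_2=3$. Also $N_3=0$. This immediately yields degree $0$: $N_3=0$, and degree $1$: $3(N_2-N_3)=9$.

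Next, the character sum in $N_1$. By Lemma~\ref{lem: J2}(i) with $s=T$, we get $\psi_3(T^3)=\sigma(T)\psi_3(1)$. By Lemma~\ref{lem: J2}(iii), $\psi_3(1)=2A_3(p)$. Hence
\[N_1=p-3+2\sigma\big(3T(J-2)\big)A_3(p).\]
I then need to express $\sigma(3(J-2))$ in terms of $T$. Solving $J+1=T^3(J-2)$ gives
\[J-2=\frac{3}{T^3-1},\]
so
\[\sigma\big(3(J-2)\big)=\sigma\big(9/(T^3-1)\big)=\sigma(T^3-1).\]
Therefore $\sigma(3T(J-2))=\sigma(T^4-T)$, and
\[N_1=p-3+2\sigma(T^4-T)A_3(p).\]

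Then the vertex count. From Lemma~\ref{lem: Jfam}, $N_0=p^2+1+4\sigma(J^2-4)p$, and I must show $\sigma(J^2-4)=\sigma(1-4T^3)$. Using $J-2=\frac{3}{T^3-1}$ gives
\[J+2=\frac{4T^3-1}{T^3-1},\]
so
\[J^2-4=\frac{3(4T^3-1)}{(T^3-1)^2}\]
and $\sigma(J^2-4)=\sigma(3(4T^3-1))=\sigma(-3)\,\sigma(1-4T^3)$. Since $p\equiv1$ mod $3$, $\sigma(-3)=1$, which gives the claimed total.

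Finally, the remaining entries come from arithmetic:
\[\text{deficient}=3N_1-3N_2+N_3=3\big(p-6+2\sigma(T^4-T)A_3(p)\big),\]
\[\text{degree }2=3(N_1-2N_2+N_3)=3\big(p-9+2\sigma(T^4-T)A_3(p)\big).\]
The only delicate step is the sign bookkeeping that converts $\sigma(3(J-2))\sigma(T)$ and $\sigma(J^2-4)$ into expressions in $T$, together with the use of $\sigma(-3)=1$. These are routine. The hypotheses $J\neq\pm2,-1$ make $T^3\neq 0,1$ and $4T^3\neq1$, which are exactly the conditions under which the expressions above are defined.
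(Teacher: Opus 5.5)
Your proposal is correct and follows the paper's proof essentially step for step. You specialize Counting Lemma~\ref{lem: Jfam} with $N_0(\lambda)=3$, $N_2=3$, $N_3=0$, use $\psi_3(T^3)=\sigma(T)\psi_3(1)=2\sigma(T)A_3(p)$, and rewrite $\sigma(3(J-2))=\sigma(T^3-1)$ and $\sigma(J^2-4)=\sigma(1-4T^3)$ via $J=(2T^3+1)/(T^3-1)$ and $\sigma(-3)=1$, with the final table arithmetic checking out.
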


\begin{proof}
The degree counts follow from the $N_k$ counts of Lemma~\ref{lem: Jfam}, subject to some simplifications. We have $N_0(\lambda)=3$. Next, 
\[\psi_3\bigg(\frac{J+1}{J-2}\bigg)=\psi_3(T^3)=\sigma(T)\psi_3(1)=2\sigma(T)A_3(p).\]
Finally, as $J=(2T^3+1)/(T^3-1)$, we compute
\begin{align*}
\sigma\big(3(J-2)\big)&=\sigma\bigg(\frac{9}{T^3-1}\bigg)=\sigma(T^3-1),\\
\sigma(J^2-4)&=\sigma\bigg(\frac{3(4T^3-1)}{(T^3-1)^2}\bigg)=\sigma(1-4T^3).
\end{align*} 
In the last step, we used the fact that $\sigma(-3)=1$ when $p\equiv 1$ mod $3$. \end{proof}

Degree counting for the $J$ family has become somewhat technical. So let us ground our analysis with a concrete example. 

Consider the parameter value $J=-2/3$, so $C=-8/3$ and $D=-20/3$. We can tabulate degree counts for the Vieta graph of the corresponding cubic as follows.

\begin{table}[ht]\renewcommand{\arraystretch}{1.2}
 \begin{tabular}{| c || c | c |} 
  \hline
 &  $p\equiv 2$ mod $3$ &  $p\equiv 1$ mod $3$   \\
 \hline
 \rule{0pt}{3ex}total & \:$p^2+1+2\sigma(-2)p$\:  & \:$p^2+1+4\sigma(-2)p$\: \\
deficient &  $3(p-4)$ & $3\big(p-6+2A_3(p)\big)$  \\ \hline
degree $2$ & $3(p-5)$  & $3\big(p-9+2A_3(p)\big)$  \\
degree $1$ & $3$  & $9$ \\
degree $0$ &  $0$  & $0$ \\
 \hline
 \end{tabular} \bigskip\caption{Degree distribution in the Vieta graph for the cubic $\M(-8/3, -20/3)$.}
\end{table}

Note that $J^2\neq 4/3$. The column for $p\equiv 2$ mod $3$ draws from Table~\ref{T6}.  The column for $p\equiv 1$ mod $3$ draws from Table~\ref{T7}, by taking $T=-1/2$.

\begin{figure}[ht]
    \centering  
    \includegraphics[width=1.0\textwidth]{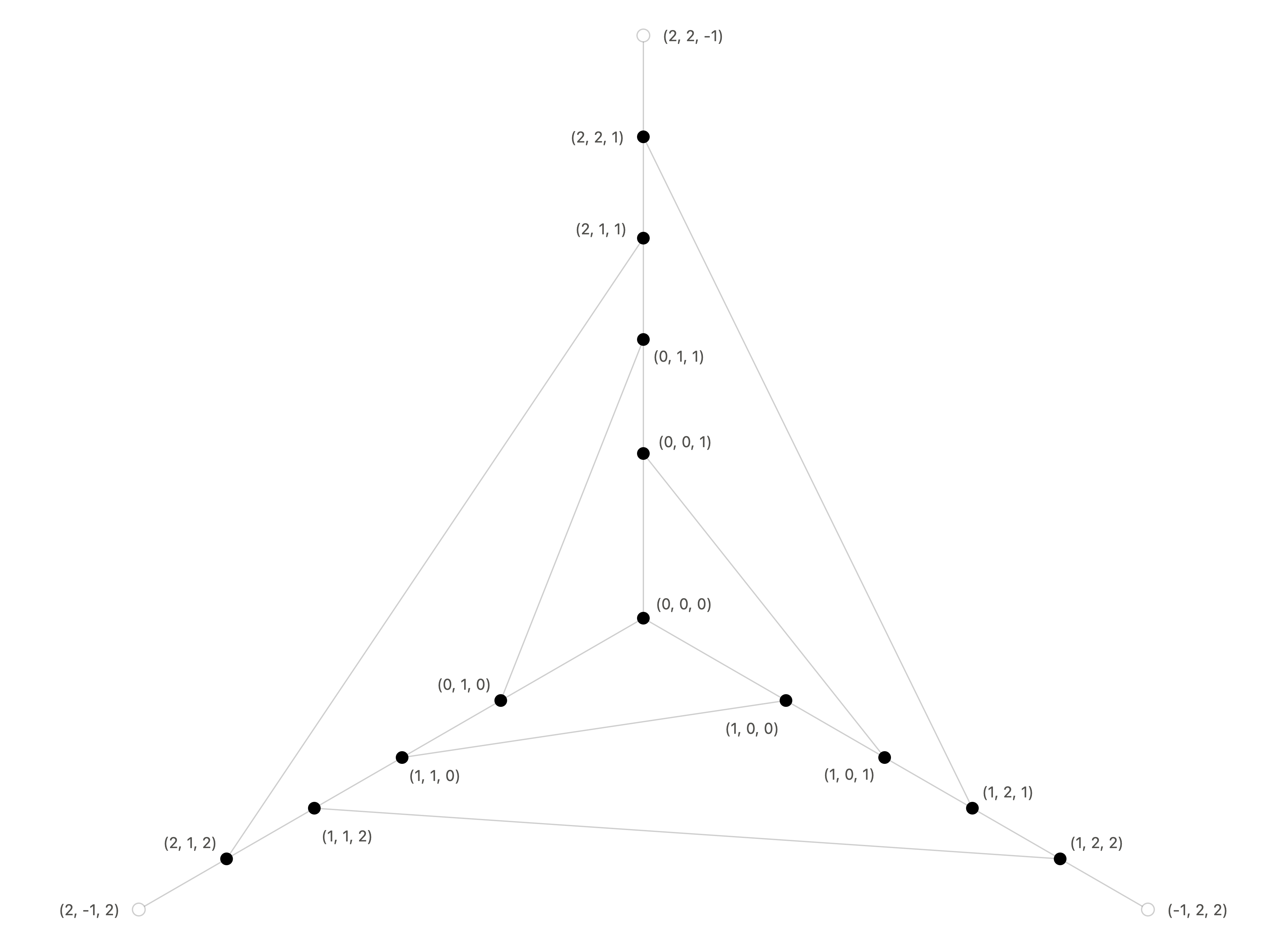}
   \caption{The Vieta graph of the Cayley cubic $\M(-8/3,-20/3)$ over $\F_{\!5}$. The leaf vertices are highlighted in white.}
    \label{fig: Cm1_D0}
\end{figure}

\bigskip
\part{Vieta graphs in dimension $4$}
In this part we discuss two parametric families of Vieta graphs in dimension $4$. But these are just incursions in the largely untouched Vieta landscape in dimension $4$; a more thorough study is needed. As for dimension $5$ or higher, explicit counts seem  difficult to obtain.  

We recall from Lemma~\ref{lem: Nk} that the total number of vertices, the number of deficient vertices, and the degree distribution in a $4$-dimensional Vieta graph can be deduced from the $N_k$ counts--$N_0$, $N_1$, $N_2$, $N_3$, and $N_4$--as follows.

\begin{table}[ht]\renewcommand{\arraystretch}{1.2}
 \begin{tabular}{| c || c |} 
 \hline
total & $N_0$  \\
deficient & $4N_1-6N_2+4N_3-N_4$  \\
 \hline
 degree $3$ & \: $4(N_1-3N_2+3N_3-N_4)$ \: \\
degree $2$ & $6(N_2-2N_3+N_4)$  \\
degree $1$ & $4(N_3-N_4)$ \\
degree $0$ & $N_4$  \\
 \hline
 \end{tabular} 
 \bigskip\caption{Degree counts from $N_k$ counts in dimension $4$.}\label{T8}\end{table}

\section{Cayley-type quartics}
The equation
\begin{align}\label{eq: Cay4}
(x+y+z+t+F)^2=Exyzt
\end{align} 
where $E\neq 0$, defines a two-parameter family of quartic hypersurfaces in $\Fp^4$. Recalling the form \eqref{eq: C'} for the Cayley family, we may think of \eqref{eq: Cay4} as a $4$-dimensional analogue. In fact, for a fixed $t\in \Fp^*$, \eqref{eq: Cay4} can be brought by the uniform rescaling $x:=x/Et$ etc., to \eqref{eq: C'} with parameter $C(t)=2Et(t+F)+8$.

\begin{countlem}\label{thm: Cay4N}
The $N_k$ counts for the polynomial 
\[f(x,y,z,t)=(x+y+z+t+F)^2-Exyzt\] 
are given as follows:
\begin{align*}
N_0(f)&=p^3-1+\sigma(E)\cdot S\Big(\frac{F^2E}{4}-2\Big)\cdot p,\\
N_1(f)&=4p^2-\big(6-\sigma(F^2-16E^{-1})\big)p+5-\llbracket F=0\rrbracket,\\
N_2(f)&=2p^2+\big(1+\sigma(F^2-16E^{-1})\big)(p-1)-2+\llbracket F=0\rrbracket,\\
N_3(f)&=p^2+3p-4+2\llbracket F=0\rrbracket+\sigma(F^2-16E^{-1}),\\
N_4(f)&=6p-8+3\llbracket F=0\rrbracket+\llbracket F^2=16E^{-1}\rrbracket.
\end{align*}
\end{countlem}
Concerning the formula for the $N_0$ count, we recall that the one-parameter character sum $S(a)$ is defined in \eqref{eq: Sa}.

\begin{proof} We apply the Discriminant Lemma~\ref{lem: start!} to $f(x,y,z,t)$. Here
\begin{align*}
\begin{cases}
\alpha_t(x,y,z)=1,\\
\beta_t(x,y,z)=2(x+y+z+F)-Exyz,\\
\gamma_t(x,y,z)=(x+y+z+F)^2,
\end{cases}
\end{align*}
and
\begin{align*}
\Delta_t(x,y,z)&=\beta_t(x,y,z)^2-4\alpha_t(x,y,z)\gamma_t(x,y,z)\\
&=Exyz\big(Exyz-4(x+y+z+F)\big).
\end{align*}

Formula \eqref{eq:N0gen} gives
\[N_0(f)=p^3+\sum_{x,y,z\in \Fp} \sigma(\Delta_t(x,y,z)).\] 
In turn, the latter trivariate character sum can be computed by applying \eqref{eq:Qgen} to the polynomial $\Delta_t(x,y,z)$. Now
 \begin{align*}
\alpha_z(x,y)=Exy(Exy-4),\qquad \beta_z(x,y)=-4Exy(x+y+F),\qquad \gamma_z(x,y)=0.
\end{align*}
So the corresponding discriminant is, quite simply, $\Delta_z(x,y)=\beta_z(x,y)^2$. We obtain
\begin{align*}
\sum_{x,y,z\in \Fp} \sigma(\Delta_t(x,y,z))=-\sum_{x,y\in \Fp} \sigma(\alpha_z(x,y))+p\sum_{x,y\in \Fp: \: \beta_z(x,y)=0} \sigma(\alpha_z(x,y)).
\end{align*}
The complete character sum can be easily evaluated:
\begin{align*}
\sum_{x,y\in \Fp} \sigma\big(Exy(Exy-4)\big)&=\sum_{x\in \Fp^*} \sum_{y\in \Fp} \sigma\big(y^2-4(Ex)^{-1}y\big)=-(p-1).
\end{align*}
We now consider the incomplete character sum, which we temporarily denote by $\Sigma$. We begin by writing
\begin{align*}
\Sigma&=\sum_{x,y\in \Fp: \: \beta_z(x,y)=0} \sigma(\alpha_z(x,y))\\
&=\sum_{x,y\in \Fp:\: x+y+F=0} \sigma\big(Exy(Exy-4)\big)\\
&=\sum_{x\in \Fp} \sigma\Big(Ex(x+F)\big(Ex(x+F)+4\big)\Big)\\
&=\sum_{x\in \Fp} \sigma\Big(x(x+F)\big(x(x+F)+4E^{-1}\big)\Big).
\end{align*}
We now have a univariate character sum, whose argument is the quartic polynomial $x^4+2Fx^3+(F^2+4E^{-1})x^2+4FE^{-1}x$. Using the quartic-to-cubic transformation lemma~\ref{lem: q2c}, we obtain the cubic polynomial
\[x^3+(F^2+4E^{-1})x^2+8F^2E^{-1}x+(4FE^{-1})^2=x^2(x+4E^{-1})+F^2(x+4E^{-1})^2.\]
Thus
\begin{align*}
\Sigma=-1+\sum_{x\in \Fp} \sigma\Big(x^2(x+4E^{-1})+F^2(x+4E^{-1})^2\Big).
\end{align*}
The rescaling $x:=4E^{-1}x$, followed by the shift $x:=x-1$, lead to
\begin{align*}
\Sigma&=-1+\sigma(E)\sum_{x\in \Fp} \sigma\bigg(x^3+\Big(\frac{F^2E}{4}-2\Big)x^2+x\bigg)\\
&=-1+\sigma(E)\cdot S\Big(\frac{F^2E}{4}-2\Big).
\end{align*}
To summarize, we have
\begin{align*}
N_0(f)=p^3+(p-1)+p\Sigma=p^3-1+\sigma(E)\cdot S\Big(\frac{F^2E}{4}-2\Big)\cdot p.
\end{align*}

The evaluation of $N_1(f)$ uses formula \eqref{eq:N1gen}, which gives $N_1(f)=N_0(\Delta_t)$. Counting the zeros of $\Delta_t(x,y,z)$ is now elementary, owing to its factorization. The counts for $N_2(f)$, $N_3(f)$, and $N_4(f)$ are likewise elementary, and somewhat tedious. They are all omitted and left to the reader.
\end{proof}

We note that the calculation of $N_0(f)$ can also be done by fibering the hypersurface \eqref{eq: Cay4} along $t\in \Fp$. Each fiber is a cubic surface of Cayley type, except when $t=0$ for which we get a plane $x+y+z+F=0$. So $N_0(f)$ can be obtained by summing over the fibers. The standardized approach via the Discriminant Lemma is, however, more convenient since it gives a way to compute $N_1(f)$ as well.

In order to make the $N_0$ count explicit, we need to resort to Lemma~\ref{lem: Sa} once again. Imposing $\dfrac{F^2E}{4}-2=0, \pm 2, \pm 6$ amounts to $F^2E=0,8, \pm 16, 32$. When $F\neq 0$ we use the uniform scalings $x:=Fx$ etc. in \eqref{eq: Cay4} to get the normalized form $(x+y+z+t+1)^2=(F^2E)xyzt$. Thus the cases $F^2E=8, \pm 16, 32$ provide four specific quartic hypersurfaces of the form 
\begin{align}
\mathcal{Q}_c: \qquad (x+y+z+t+1)^2=cxyzt,
\end{align} 
for $c=8, \pm 16, 32$. The case $F^2E=0$, that is $F=0$, is the one-parameter family of quartic hypersurfaces
\begin{align}\label{eq: Cay40}
\mathcal{Q}(E): \qquad (x+y+z+t)^2=Exyzt.
\end{align} 
Here too we can rescale uniformly, say $x:=sx$ etc., which leads to the parameter change $E:=s^2E$. So, up to rescaling, $\mathcal{Q}(E)$ only depends on the quadratic signature $\sigma(E)$, and not on $E$ itself. 

The computation of $N_0(f)$ is, essentially, due to Carlitz \cite[Thm.2]{Car2}. But herein we push it a step further, by linking it to the sum $S(a)$. This allows us to single out more parameter values for which $N_0(f)$ is explicitly computable. The cases 
$F=0$ and $F^2E=8, 16$ (in our notation) were already pointed out by Carlitz; to which we may now add, as discussed above, the cases $F^2E=-16, 32$. We remark that the evaluation of $N_0(f)$ for the latter value--that is, the point count for the quartic $\mathcal{Q}_{32}$--is also worked out in \cite[Exer.5.32]{N}. 

We now turn to the Vieta graph of \eqref{eq: Cay4}. The total vertex count, given by $N_0(f)$, is the bottleneck which forces us to focus on specific quartics in order to explicitly compute it. The vertex counts for deficient vertices are general. They follow  from Table~\ref{T8} and the previous counting lemma; they are tabulated in Table~\ref{T9}.   

\begin{table}[ht]\renewcommand{\arraystretch}{1.2}
 \begin{tabular}{| c || c |} 
  \hline
degree $3$ & \: $4\big((p-3)^2-2\e(p-3)+1-\llbracket F=0\rrbracket-\llbracket F^2E=16\rrbracket\big)$ \: \\
degree $2$ & $6\big((1+\e)(p-3)+\llbracket F^2E=16\rrbracket\big)$  \\
degree $1$ & $4\big(p^2-3p+4+\e-\llbracket F=0\rrbracket-\llbracket F^2E=16\rrbracket\big)$ \\
degree $0$ & $6p-8+3\llbracket F=0\rrbracket+\llbracket F^2E=16\rrbracket$  \\
 \hline
 \end{tabular} 
 \bigskip\caption{Vertex counts for deficient vertices in the Vieta graph of the general quartic \eqref{eq: Cay4}, where $\e=\sigma(F^2-16E^{-1})$.}\label{T9}\end{table}

We compile the byproducts of our analysis in the following two theorems.

\begin{thm}
For the Vieta graph of a quartic $\mathcal{Q}(E)$, the total number of vertices and the degree distribution for the deficient vertices are given in Table \ref{T10} below.
\begin{table}[h]\renewcommand{\arraystretch}{1.2}
 \begin{tabular}{| c || c |} 
 \hline
total & $p^3-1-\sigma(E) p$  \\
\hline
degree $3$ & \:$4\big((p-3)^2- 2\sigma(-E)(p-3)\big)$\:  \\
degree $2$ & $6(1+\sigma(-E))(p-3)$  \\
degree $1$ & $4\big(p^2-3p+3+\sigma(-E)\big)$   \\
degree $0$ &  $6p-5$  \\
 \hline
 \end{tabular} \bigskip\caption{Degree distribution in the Vieta graph of a quartic $\mathcal{Q}(E)$.}\label{T10}
\end{table}
\end{thm}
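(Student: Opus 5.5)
The plan is to specialize the Counting Lemma~\ref{thm: Cay4N} and Table~\ref{T9} to $F=0$; no new counting is needed. For the total we substitute $F=0$ into the $N_0$ formula: the argument of $S$ becomes $\frac{F^2E}{4}-2=-2$. By Lemma~\ref{lem: Sa}(i) and (iii), $S(-2)=\sigma(-1)S(2)=\sigma(-1)\cdot(-\sigma(-1))=-1$. Hence
\[N_0(f)=p^3-1+\sigma(E)\cdot(-1)\cdot p=p^3-1-\sigma(E)p,\]
which is the first row of Table~\ref{T10}.

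For the deficient vertices we evaluate Table~\ref{T9} at $F=0$. Two quantities need to be recorded:
\begin{itemize}
\item $\e=\sigma(F^2-16E^{-1})=\sigma(-16E^{-1})=\sigma(-E)$, since $\sigma(16)=1$ and $\sigma(E^{-1})=\sigma(E)$;
\item $\llbracket F=0\rrbracket=1$ and $\llbracket F^2E=16\rrbracket=0$, the latter because $F^2E=0\neq 16$ for $p\geq 5$.
\end{itemize}

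Substituting these row by row gives the table:
\begin{itemize}
\item degree $3$: $4\big((p-3)^2-2\sigma(-E)(p-3)+1-1-0\big)=4\big((p-3)^2-2\sigma(-E)(p-3)\big)$;
\item degree $2$: $6(1+\sigma(-E))(p-3)$;
\item degree $1$: $4\big(p^2-3p+4+\sigma(-E)-1\big)=4\big(p^2-3p+3+\sigma(-E)\big)$;
\item degree $0$: $6p-8+3=6p-5$.
\end{itemize}
These match Table~\ref{T10}.

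The arithmetic above is routine. The only real content lies upstream, in Table~\ref{T9} and in the $N_1,\dots,N_4$ counts of Lemma~\ref{thm: Cay4N}, whose proofs were omitted in the paper and which I am assuming as stated.

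If I were to verify those inputs, the main obstacle would be $N_4$ at $F=0$. There one must solve the full gradient system for $(x+y+z+t)^2=Exyzt$, namely
\[2s=Eyzt,\quad 2s=Exzt,\quad 2s=Exyt,\quad 2s=Exyz,\qquad s=x+y+z+t.\]
I would first check the case $s=0$: then at least two of the coordinates vanish, and one counts lines in the plane $x+y+z+t=0$ with two vanishing coordinates. There are six coordinate pairs, each giving a line of $p$ points, and each pair of lines meets only at the origin. By inclusion--exclusion this gives $6(p-1)+1=6p-5$, consistent with the table. I would then check that the case $s\neq 0$ yields no further solutions, which matches the absence of the $\llbracket F^2E=16\rrbracket$ term here.

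As a final sanity check I would compute the edge count in two ways. One is the formula $\frac{n}{2}\big(N_0-N_1\big)$ with $n=4$, using $N_1=4p^2-(6-\sigma(-E))p+4$. The other is the handshake lemma applied to the degree distribution in Table~\ref{T10}. The two results should agree.
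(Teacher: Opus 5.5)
Your proposal is correct and is exactly the paper's derivation: specialize Counting Lemma~\ref{thm: Cay4N} and Table~\ref{T9} to $F=0$, using $S(-2)=\sigma(-1)S(2)=-1$ from Lemma~\ref{lem: Sa}(i),(iii), together with $\epsilon=\sigma(-E)$, $\llbracket F=0\rrbracket=1$ and $\llbracket F^2E=16\rrbracket=0$. Your direct check of $N_4=6p-5$ is also right (when $s\neq 0$ one gets $x=y=z=t=u$, hence both $Eu^2=8$ and $Eu^2=16$, which is impossible); note, though, that the edge-count comparison is automatic from Lemma~\ref{lem: Nk}, so it only tests your arithmetic.
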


\begin{thm}
Consider a quartic $\mathcal{Q}_c$, where $c=8, \pm 16, 32$. For the corresponding Vieta graph, the total number of vertices and the degree distribution for the deficient vertices are given in Tables \ref{T11} and \ref{T12} below.

\begin{table}[ht]\renewcommand{\arraystretch}{1.2}
 \begin{tabular}{| c || c | c |} 
  \hline
 &  $c=8$ &  $c=16$ \\
 \hline
total & $p^3-1+\sigma(2)\varphi_2(1)p$  & \:$p^3-1-\sigma(-1)p$\: \\
\hline
\rule{0pt}{3ex}degree $3$ & \:$4\big(p-3-\sigma(-1)\big)^2$\:  & $4(p-3)^2$  \\
degree $2$ & $6(1+\sigma(-1))(p-3)$  & $6(p-2)$  \\
degree $1$ & \:$4\big(p^2-3p+4+\sigma(-1)\big)$\:  & $4\big(p^2-3p+3\big)$ \\
degree $0$ &  $6p-8$  & $6p-7$ \\
 \hline
 \end{tabular} \bigskip\caption{Degree distribution in the Vieta graphs of the quartics $\mathcal{Q}_{8}$ and $\mathcal{Q}_{16}$.}\label{T11}
\end{table}

\begin{table}[ht]\renewcommand{\arraystretch}{1.2}
 \begin{tabular}{| c || c | c |} 
  \hline
 &  $c=-16$ &  $c=32$ \\
 \hline
total & $p^3-1+\sigma(2)\varphi_2(1)p$  & $p^3-1+\varphi_2(1)p$ \\
\hline
\rule{0pt}{3ex}degree $3$ & $4\big(p-3-\sigma(2)\big)^2$  & $4\big(p-3-\sigma(2)\big)^2$  \\
degree $2$ & $6(1+\sigma(2))(p-3)$  & $6(1+\sigma(2))(p-3)$  \\
degree $1$ & \:$4\big(p^2-3p+4+\sigma(2)\big)$\:  & \:$4\big(p^2-3p+4+\sigma(2)\big)$\: \\
degree $0$ &  $6p-8$  & $6p-8$ \\
 \hline
 \end{tabular} \bigskip\caption{Degree distribution in the Vieta graphs of the quartics $\mathcal{Q}_{-16}$ and $\mathcal{Q}_{32}$.}\label{T12}
\end{table}
\end{thm}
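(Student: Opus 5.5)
The plan is to specialize Counting Lemma~\ref{thm: Cay4N} and Table~\ref{T9} to $F=1$, $E=c$. This setting is legitimate because, as observed above, the uniform scaling $x:=Fx$ etc. turns \eqref{eq: Cay4} into $\mathcal{Q}_c$ with $c=F^2E$, and this scaling induces a graph isomorphism. So everything reduces to computing $N_0$ and evaluating the parameters that enter Table~\ref{T9}. For $F=1$ we have $\llbracket F=0\rrbracket=0$. We also have $\e=\sigma(1-16/c)=\sigma(c(c-16))$, since $\sigma(c^2)=1$, and $\llbracket F^2E=16\rrbracket=\llbracket c=16\rrbracket$.

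Next I would compute $\e$ in each case:
\begin{itemize}
\item for $c=8$, $\e=\sigma(-64)=\sigma(-1)$;
\item for $c=16$, $\e=0$ and the bracket equals $1$;
\item for $c=-16$, $\e=\sigma(512)=\sigma(2)$;
\item for $c=32$, $\e=\sigma(512)=\sigma(2)$.
\end{itemize}
Substituting into Table~\ref{T9} gives the deficient rows. For example, the degree-$3$ entry with $\e=\pm1$ and no bracket contributions is $4\big((p-3)^2-2\e(p-3)+1\big)=4(p-3-\e)^2$, because $\e^2=1$. For $c=16$ it is $4\big((p-3)^2+1-1\big)=4(p-3)^2$. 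The remaining rows are direct substitutions; for instance, the degree-$2$ entry for $c=16$ is $6(p-3+1)=6(p-2)$, and the degree-$0$ entry is $6p-8+\llbracket c=16\rrbracket$.

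For the total I would use $N_0=p^3-1+\sigma(c)\,S(c/4-2)\,p$ and Lemma~\ref{lem: Sa}:
\begin{itemize}
\item $c=8$ gives $S(0)=\varphi_2(1)$ and $\sigma(8)=\sigma(2)$;
\item $c=16$ gives $S(2)=-\sigma(-1)$ and $\sigma(16)=1$;
\item $c=-16$ gives $S(-6)=\sigma(-1)\sigma(2)\varphi_2(1)$ together with $\sigma(-16)=\sigma(-1)$, so the product is $\sigma(2)\varphi_2(1)$;
\item $c=32$ gives $S(6)=\sigma(2)\varphi_2(1)$ and $\sigma(32)=\sigma(2)$, so the product is $\varphi_2(1)$.
\end{itemize}

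The only genuine obstacle is that the $N_1$ through $N_4$ counts in Counting Lemma~\ref{thm: Cay4N} were left to the reader. If those formulas are to be fully trusted, the main work is to verify them. I would do this via the factorization $\Delta_t=Exyz\,(Exyz-4(x+y+z+F))$ for $N_1=N_0(\Delta_t)$, and by direct elimination for the higher counts, in the style of Theorem~\ref{thm: N2}. Granting them, the proof is a bookkeeping exercise. I would double-check the $c=8$ case, where $\e=\sigma(-1)$ and hence $1+\e=1+\sigma(-1)$ appears in the degree-$2$ row.
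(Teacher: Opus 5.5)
Your proposal is correct and is exactly the paper's route. You specialize Counting Lemma~\ref{thm: Cay4N} to $F=1$, $E=c$, read the deficient rows off Table~\ref{T9} with $\e=\sigma(c(c-16))$ and $\llbracket F^2E=16\rrbracket=\llbracket c=16\rrbracket$, and evaluate the total via $\sigma(c)\,S(c/4-2)$ and Lemma~\ref{lem: Sa}; all your case computations agree with Tables~\ref{T11} and~\ref{T12}. The $N_1$--$N_4$ counts that the paper leaves to the reader do hold up under the direct elimination you propose (for instance, $N_4$ counts the points of $x+y+z+t=-F$ with at least two zero coordinates, plus the symmetric point with all coordinates $-F/2$ when $F^2E=16$), so granting them costs nothing.
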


The count for $c=16$ is especially simple. The counts for $c=8, -16, 32$ can be made more explicit by considering two cases: for $p\equiv 3$ mod $4$, we have $\varphi_2(1)=0$, whereas for $p\equiv 1$ mod $4$ we have $\varphi_2(1)=2A_2(p)$.

\section{Markoff--Hurwitz quartics}
An equation of the form $x_1^2+x_2^2+\ldots+x_n^2=Ex_1x_2\dots x_n$, where $E\neq 0$, is called a Markoff--Hurwitz equation (see, for instance, \cite{Bar2}). The case $n=3$ is, up to a uniform rescaling, the Markoff cubic \eqref{eq: M}. We now discuss the case $n=4$, up to a slight relabeling.

Consider the one-parameter family of quartic hypersurfaces defined by the equation
\begin{align}\label{eq: V4}
x^2+y^2+z^2+t^2=2Exyzt
\end{align} 
where $E\neq 0$. By uniformly rescaling each variable in \eqref{eq: V4}, $x:=sx$ etc. for some $s\neq 0$, we can change the parameter $E$ into $s^2E$; we therefore expect the counting analysis to only dependent on the quadratic signature of $E$. Thus, there are essentially only two $4$-dimensional Vieta graphs for the Markoff--Hurwitz quartics--one for $\sigma(E)=+1$ and one for $\sigma(E)=-1$.

The counting lemma below gives explicit formulas for the $N_k$ counts. 

\begin{countlem}\label{thm: HurwitzN}
The $N_k$ counts for the polynomial
\[f(x,y,z,t)=x^2+y^2+z^2+t^2-2Exyzt\]
are given as follows:
\begin{itemize}
\item[(i)] if $p\equiv 3$ mod $4$, then 
\begin{align*}
N_0(f)&=p^3+5p-1,\\
N_1(f)&=(p-2)^2,\\
N_2(f)&=N_3(f)=N_4(f)=1;
\end{align*}
\item[(ii)] if $p\equiv 1$ mod $4$, then 
\begin{align*}
N_0(f)&=p^3-7p-1+8\sigma(E)A_2(p)\cdot p,\\
N_1(f)&=(p+2)^2+4\sigma(2E)\big(p+A_2(p)^2\big),\\
N_2(f)&=6p-5+4\sigma(2E)(p-1),\\
N_3(f)&=9+8\sigma(2E),\\
N_4(f)&=1.
\end{align*}
\end{itemize}
\end{countlem}

The computation of $N_0(f)$ is, essentially, yet another result of Carlitz \cite[Thm.2]{Car1}. He actually counts the number of solutions to the not-necessarily symmetric equation $Ax^2+By^2+Cz^2+Dt^2=2Exyzt+F$. The counting formula simplifies considerably when $F=0$ and $A=B=C=D=1$. This case is also worked out in \cite[Ex.4.3]{N}. We follow Carlitz in using $2E$, in place of $E$, as the coefficient of the quartic term $xyzt$ in \eqref{eq: V4}. This turns out to be quite convenient.

We break the proof into three parts. We first handle the computations of $N_2(f)$, $N_3(f)$, and $N_4(f)$, which are straightforward. Next, we work out the computation of $N_0(f)$ This is done for completeness, but it highlights once again the systematic approach using the Discriminant Lemma. Finally, using the same lemma we compute $N_1(f)$; this turns out to be more involved.

\begin{proof}[Proof: the $N_2$, $N_3$, and $N_4$ counts] Recall that $N_2$ counts the number of solutions $(x,y,z,t)$ to the system $f=0$, $\partial_z f=0$, $\partial_t f=0$. We have $\partial_t f=2(t-Exyz)$, so $\partial_t f=0$ determines $t=Exyz$. Eliminating $t$, our system $f=0$, $\partial_z f=0$ reduces to $x^2+y^2+z^2=(Exyz)^2$, $z=(Exy)^2z$ as a system in $(x,y,z)$. The latter relation gives $(Exyz)^2=z^2$, and so we may further reduce to the system
\begin{align}\label{eq: sys}
\begin{cases}
x^2+y^2=0,\\
z=(Exy)^2z.
\end{cases}
\end{align} 
We note the trivial solution $(0,0,0)$. When $p\equiv 3$ mod $4$, this is the only solution; thus $N_2=1$ in this case. Since $(0,0,0)$ contributes to the $N_4$ count as well (see below for the conditions), we deduce that $N_2=N_3=N_4=1$.

Assume $p\equiv 1$ mod $4$ for the remainder of the proof. Let $j\in\Fp$ satisfy $j^2=-1$. Then $x^2+y^2=0$ yields $y=\pm jx$; in turn $z=(Exy)^2z$ amounts to $z=(jEx^2)^2z$. The system \eqref{eq: sys} has $2p-1$ solutions with $z=0$. When $z\neq 0$, we must have $jEx^2=\pm 1$, which has $2(1+\sigma(jE))$ solutions $x\in \Fp^*$. For each such $x$ there correspond two values of $y$, and $p-1$ possibilities for $z$. Thus
\[N_2=(2p-1)+2\big(1+\sigma(jE)\big)\cdot 2\cdot (p-1).\]
This is the claimed formula for $N_2$, up to observing that $\sigma(j)=\sigma(2)$. Indeed, using the second supplementary law of quadratic reciprocity, the fact that $(p+1)/2$ is odd, and Euler's formula, we have 
\[\sigma(2)=(-1)^{\frac{p^2-1}{8}}=(-1)^{\frac{p-1}{4}}=j^{\frac{p-1}{2}}=\sigma(j).\]

We now turn to evaluating $N_3$ which, we recall, counts the number of solutions $(x,y,z,t)$ to the system $f=0$, $\partial_y f=0$, $\partial_z f=0$, $\partial_t f=0$. This means that the system \eqref{eq: sys} in $(x,y,z)$ is enhanced by an additional relation:
\begin{align}\label{eq: sys2}
\begin{cases}
x^2+y^2=0,\\
z=(Exy)^2z,\\
y=(Exz)^2y.
\end{cases}
\end{align} 
Now the case $z=0$ yields only the trivial solution $(0,0,0)$. The case $z\neq 0$ forces $y\neq 0$, and the latter relation becomes $Exz=\pm 1$. This means that, compared to the above count for $N_2$, the multiplicity in $z$ is only $2$ in place of $p-1$. We obtain
\[N_3=1+2\big(1+\sigma(jE)\big)\cdot 2\cdot 2=8\big(1+\sigma(jE)\big)+1,\]
in which we replace $\sigma(j)$ by $\sigma(2)$ once again.

Finally, we evaluate $N_4$, which counts the number of solutions $(x,y,z,t)$ to the full system $f=0$, $\partial_x f=0$, $\partial_y f=0$, $\partial_z f=0$, $\partial_t f=0$. We now get the system 
\begin{align}\label{eq: sys3}
\begin{cases}
x^2+y^2=0,\\
z=(Exy)^2z,\\
y=(Exz)^2y,\\
x=(Eyz)^2x.
\end{cases}
\end{align} 
There is no solution to this system other than the trivial solution $(0,0,0)$. Indeed, assume one, whence each one, of $x,y,z$ is non-zero. Then $Exz=\pm 1$ and $Eyz=\pm 1$. It follows that $y=\pm x$, which makes $x^2+y^2=0$ impossible.
\end{proof}

\begin{proof}[Proof: the $N_0$ count] We apply formula \eqref{eq:N0gen} of the Discriminant Lemma~\ref{lem: start!} to the polynomial $f(x,y,z,t)$. Here
\begin{align}\label{eq: MHt}
\alpha_t(x,y,z)=1,\qquad \beta_t(x,y,z)=-2Exyz,\qquad \gamma_t(x,y,z)=x^2+y^2+z^2,
\end{align}
and
\begin{align*}
\Delta_t(x,y,z)&=\beta_t(x,y,z)^2-4\alpha_t(x,y,z)\gamma_t(x,y,z)\\
&=4(Exyz)^2-4(x^2+y^2+z^2).
\end{align*}
Thus
\begin{align}\label{eq: MHN0}
N_0(f)=p^3+\sum_{x,y,z\in \Fp} \sigma(\Delta_t(x,y,z)).
\end{align}
To compute the latter sum, we apply \eqref{eq:Qgen} to $\Delta_t(x,y,z)/4$. We have
 \begin{align}\label{eq: MHz}
\alpha_z(x,y)=(Exy)^2-1,\qquad \beta_z(x,y)=0,\qquad \gamma_z(x,y)=-(x^2+y^2)
\end{align}
and
\begin{align*}
\Delta_z(x,y)&=\beta_z(x,y)^2-4\alpha_z(x,y)\gamma_z(x,y)=4\big((Exy)^2-1\big)(x^2+y^2).
\end{align*}
We find that the trivariate character sum in \eqref{eq: MHz} equals
\begin{align*}
-\sum_{x,y\in \Fp} \sigma\big(\alpha_z(x,y)\big)+p\sum_{x,y\in \Fp: \: \gamma_z(x,y)=0} \sigma\big(\alpha_z(x,y)\big)+p\sum_{x,y\in \Fp: \: \alpha_z(x,y)=0} \sigma\big(\gamma_z(x,y)\big).
\end{align*}
Simple manipulations, left to the reader, yield
\begin{align*}
\sum_{x,y\in \Fp} \sigma\big(\alpha_z(x,y)\big)&=\sum_{x,y\in \Fp} \sigma\big((Exy)^2-1\big)\\
&=p\sigma(-1)-(p-1),
\end{align*}
\begin{align*}
\sum_{x,y\in \Fp: \: \gamma_z(x,y)=0} \sigma\big(\alpha_z(x,y)\big)&=\sum_{x,y\in \Fp:\: x^2+y^2=0} \sigma\big((Exy)^2-1\big)\\
&=\sigma(-1)+\big(1+\sigma(-1)\big) \sum_{x\in \Fp^*} \sigma(x^4+E^{-2}),
\end{align*}
\begin{align*}
\sum_{x,y\in \Fp: \: \alpha_z(x,y)=0} \sigma\big(\gamma_z(x,y)\big)&=\sum_{x,y\in \Fp:\: (Exy)^2-1=0} \sigma\big(-(x^2+y^2)\big)\\
&=2\sigma(-1) \sum_{x\in \Fp^*}\sigma(x^4+E^{-2}).
\end{align*}
Combining our computations, we have at this point that
\[N_0(f)=p^3+p-1+p\cdot \big(1+3\sigma(-1)\big) \sum_{x\in \Fp^*} \sigma(x^4+E^{-2}).\]
Now
\begin{align*}
\sum_{x\in \Fp^*} \sigma(x^4+E^{-2})&=-1+\sum_{x\in \Fp} \sigma(x^4+E^{-2})\\
&=-1+\sum_{u\in \Fp} \sigma(u^2+E^{-2})\big(1+\sigma(u)\big)\\
&=-1+\sum_{u\in \Fp} \sigma(u^2+E^{-2})+\sum_{u\in \Fp} \sigma(u^3+E^{-2}u)\\
&=-2+\varphi_2(E^{-2})=-2+\sigma(E)\varphi_2(1).
\end{align*}
The degree-lowering trick used above is to set $u=x^2$, and to note that for each $u$ there are $1+\sigma(u)$ possible values of $x$. We will use this trick in the upcoming $N_1$ count as well.  All in all,
\begin{align*}
N_0(f)&=p^3+p-1+p\cdot \big(1+3\sigma(-1)\big) \big(-2+\sigma(E)\varphi_2(1)\big)\\
&=\begin{cases}
p^3+p-1+4p & \textrm{ if } p\equiv 3 \textrm{ mod }4,\\
p^3+p-1+8p\big(\sigma(E)A_2(p)-1\big) & \textrm{ if } p\equiv 1 \textrm{ mod }4.
\end{cases}
\end{align*}
The claimed formulas for $N_0(f)$ follow.  
\end{proof}

\begin{proof}[Proof: the $N_1$ count] We apply formula \eqref{eq:N1gen}, keeping the notations \eqref{eq: MHt}. As $\alpha_t(x,y,z)$ is constant, we have 
\[N_1(f)=N_0(\Delta_t).\]  
Next, we apply \eqref{eq:N0gen} to $\Delta_t(x,y,z)$; again, we keep the notations \eqref{eq: MHz}. We get
\begin{align}\label{eq: MHN0D}
N_0(\Delta_t)=p^2-N_0(\alpha_z)+pN_0(\alpha_z,\gamma_z)+\sum_{x,y\in \Fp}\sigma\big(\Delta_z(x,y)\big)
\end{align}
since $\beta_z\equiv 0$. Now $N_0(\alpha_z)$ counts the number of solutions $(x,y)$ to $(Exy)^2=1$; we see that 
\begin{align}\label{eq: MHN00}
N_0(\alpha_z)=2(p-1).
\end{align} 
Next, $N_0(\alpha_z,\gamma_z)$ counts the number of solutions $(x,y)$ to the system $(Exy)^2=1$, $x^2+y^2=0$; recalling our previous discussion of the $N_2$ count, we see that 
\begin{align}\label{eq: MHN000}
N_0(\alpha_z,\gamma_z)=\begin{cases}
0 & \textrm{ if } p\equiv 3 \textrm{ mod } 4,\\
4\big(1+\sigma(2E)\big) & \textrm{ if } p\equiv 1 \textrm{ mod } 4.
\end{cases}
\end{align}
The bivariate character sum requires the most work. Using the degree-lowering trick encountered before, but this time in each variable $x$ and $y$, we write
\begin{align*}
\sum_{x,y\in \Fp}\sigma\big(\Delta_z(x,y)\big)&=\sum_{x,y\in \Fp}\sigma\big((E^2x^2y^2-1)(x^2+y^2)\big)\\
&=\sum_{x,y\in \Fp}\sigma\big((E^2xy-1)(x+y)\big)\big(1+\sigma(x)\big)\big(1+\sigma(y)\big)\\
&=\sigma(E)\sum_{x,y\in \Fp}\sigma\big((xy-1)(x+y)\big)\big(1+\sigma(Ex)\big)\big(1+\sigma(Ey)\big)
\end{align*}
by rescaling $x:=x/E$, $y:=y/E$ in the last step. Setting
\begin{align*}
S_0&= \sum_{x,y\in \Fp}\sigma\big((xy-1)(x+y)\big),\\
S_1&= \sum_{x,y\in \Fp}\sigma\big((xy-1)(x+y)\big)\sigma(x),\\
S_2&= \sum_{x,y\in \Fp}\sigma\big((xy-1)(x+y)\big)\sigma(xy),
\end{align*}
we obtain
\begin{align}\label{eq: S}
\sum_{x,y\in \Fp}\sigma\big(\Delta_z(x,y)\big)=\sigma(E) S_0+2S_1+\sigma(E) S_2.
\end{align}
The computations of the three sums, $S_0$, $S_1$, and $S_2$, share some similarities. To start off,
\begin{align*}
S_0&= \sum_{y\in \Fp} \sigma(-y)+\sum_{x\in \Fp^*}\sigma(x)\sum_{y\in \Fp} \sigma\big((y-x^{-1})(y+x)\big)\\
&=\sum_{x\in \Fp^*}\sigma(x)\big(p \llbracket -x^{-1}=x\rrbracket-1 \big)=p\sum_{x\in \Fp^*}\sigma(x) \llbracket x^2=-1\rrbracket.
\end{align*}
There are no solutions to $x^2=-1$ when $p\equiv 3$ mod $4$, so the latter sum vanishes in this case. When $p\equiv 1$ mod $4$, let $j\in \Fp$ satisfy $j^2=-1$. Then the inner sum has two terms, corresponding to $x=j$ and $x=-j$, and it evaluates to $\sigma(j)+\sigma(-j)=2\sigma(j)=2\sigma(2)$. Overall, 
\begin{align}\label{eq: S0}
S_0=\begin{cases}
0 & \textrm{ if } p\equiv 3 \textrm{ mod } 4,\\
\sigma(2)\cdot 2p & \textrm{ if } p\equiv 1 \textrm{ mod } 4.
\end{cases}
\end{align}

Next, 
\begin{align*}
S_1&= \sum_{x\in \Fp^*} \sum_{y\in \Fp} \sigma\big((y-x^{-1})(y+x)\big)\\
&=\sum_{x\in \Fp^*}\big(p \llbracket -x^{-1}=x\rrbracket-1 \big)=-(p-1)+p\sum_{x\in \Fp^*}\llbracket x^2=-1\rrbracket
\end{align*}
whence
\begin{align}\label{eq: S1}
S_1&=\begin{cases}
-p+1 & \textrm{ if } p\equiv 3 \textrm{ mod } 4,\\
p+1 & \textrm{ if } p\equiv 1 \textrm{ mod } 4.
\end{cases}
\end{align}

Finally, we claim that
\begin{align}\label{eq: S2}
S_2&=\begin{cases}
0 & \textrm{ if } p\equiv 3 \textrm{ mod } 4,\\
\sigma(2)(4A_2(p)^2-2p) & \textrm{ if } p\equiv 1 \textrm{ mod } 4.
\end{cases}
\end{align}
Indeed, when $p\equiv 3$ mod $4$ we observe that the double sign change $x:=-x$, $y:=-y$ yields $S_2=\sigma(-1) S_2=-S_2$, whence $S_2=0$. The same idea could have been used to show $S_0=0$.

Assume now that $p\equiv 1$ mod $4$. We have
\begin{align*}
\varphi_2(1)\varphi_2(-1)&=\bigg(\sum_{x\in \Fp} \sigma(x^3+x)\bigg) \bigg(\sum_{y\in \Fp} \sigma(y^3-y)\bigg)\\
&=\sum_{x,y\in \Fp} \sigma\big(xy(x^2+1)(y^2-1)\big).
\end{align*}
Recall that $\varphi_2(1)=2A_2(p)$ and $\varphi_2(-1)=\sigma(2)\cdot 2A_2(p)$. In the latter double sum, we may restrict to summing over $x,y\neq 0$, and we make the change of variable $x:=x/y$. We then get, by the degree-lowering trick
\begin{align*}
\sigma(2)\cdot 4A_2(p)^2&=\sum_{x,y\in \Fp^*} \sigma\big(x(x^2+y^2)(y^2-1)\big)\\
&=\sum_{x, y\in \Fp^*} \sigma\big(x(x^2+y)(y-1)\big) \big(1+\sigma(y)\big).
\end{align*} 
We peel off and compute the following sum:
\begin{align*}
\sum_{x, y\in \Fp^*} \sigma\big(x(x^2+y)(y-1)\big)&=\sum_{x\in \Fp^*} \sigma(x)\sum_{y\in \Fp^*}  \sigma\big((y+x^2)(y-1)\big)\\
&=\sum_{x\in \Fp^*} \sigma(x)\Big(\big(p\llbracket x^2=-1\rrbracket-1\big)-\sigma(-x^2) \Big)\\
&=p \sum_{x\in \Fp^*} \sigma(x)\llbracket x^2=-1\rrbracket.
\end{align*}
As we have already seen, the latter expression evaluates to $\sigma(2)\cdot 2p$.

Therefore
\begin{align*}
\sigma(2)\big(4A_2(p)^2-2p\big)&=\sum_{x, y\in \Fp^*} \sigma\big(xy(x^2+y)(y-1)\big)\\
&=\sum_{x, y\in \Fp^*} \sigma\big(x(xy)(x^2+xy)(xy-1)\big)\\
&=\sum_{x, y\in \Fp^*} \sigma\big(xy(x+y)(xy-1)\big).
\end{align*}
On the way, we effected the change of variable $y:=xy$. But the latter double sum equals $S_2$, since we can extend to summing over $x,y\in \Fp$. This completes the verification of \eqref{eq: S2}.

Inserting the computations \eqref{eq: S0}, \eqref{eq: S1}, \eqref{eq: S2} into \eqref{eq: S} we get
\begin{align}\label{eq: MHss}
\sum_{x,y\in \Fp}\sigma\big(\Delta_z(x,y)\big)&=\begin{cases}
-2p+2 & \textrm{ if } p\equiv 3 \textrm{ mod } 4,\\
2p+2+\sigma(2E)\cdot 4A_2(p)^2& \textrm{ if } p\equiv 1 \textrm{ mod } 4.
\end{cases}
\end{align}

By combining \eqref{eq: MHN00}, \eqref{eq: MHN000}, and \eqref{eq: MHss} into \eqref{eq: MHN0D}, we conclude that
\begin{align*}
N_1(f)=N_0(\Delta_t)&=\begin{cases}
(p-2)^2 & \textrm{ if } p\equiv 3 \textrm{ mod } 4,\\
(p+2)^2+4\sigma(2E)\big(p+A_2(p)^2\big)& \textrm{ if } p\equiv 1 \textrm{ mod } 4.
\end{cases}
\end{align*}
This completes the proof of Theorem~\ref{thm: HurwitzN}.
\end{proof}

In the $N_1$ count above, we need to count the solutions to equation $\Delta_t(x,y,z)=0$, that is to say, 
\[(E^2x^2y^2-1)z^2=x^2+y^2.\] 
This is a close relative of the solution-counting for the equation
\begin{align*}
z^2=(x^2+y^2)(1+x^2y^2)
\end{align*}
that is worked out in \cite[Exer.4.20]{N}. Compare \cite[Thm.4.5]{KTVZ} and \cite[Thm.2]{U}.

From the previous counting lemma, we may deduce the following. 

\begin{thm}
For the Vieta graph of a Markoff--Hurwitz quartic \eqref{eq: V4}, the total number of vertices and the degree distribution for the deficient vertices are given in Table \ref{T13} below.
\begin{table}[ht]\renewcommand{\arraystretch}{1.2}
\begin{tabular}{| c || c | c |} 
  \hline
 &  $p\equiv 3$ mod $4$ &  $p\equiv 1$ mod $4$ \\
 \hline
\rule{0pt}{2.5ex}total & $p^3+5p-1$  & $p^3-7p-1+8\sigma(E)A_2(p)\cdot p$ \\
\hline
\rule{0pt}{2.5ex}degree $3$ & \:$4(p^2-4p+3)$\:  & \:$4\big(p^2-14p+45+4\sigma(2E)(A_2(p)^2-2p+9)\big)$\:  \\
degree $2$ & $0$  & $6\big(6p-22+4\sigma(2E)(p-5)\big)$  \\
degree $1$ & $0$  & $32\big(1+\sigma(2E)\big)$ \\
degree $0$ &  $1$  & $1$ \\
 \hline
 \end{tabular} \bigskip\caption{Degree distribution in the Vieta graph of a Markoff--Hurwitz quartic.}\label{T13}
\end{table}
\end{thm}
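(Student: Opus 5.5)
The table follows by feeding the counts of Counting Lemma~\ref{thm: HurwitzN} into the dimension-$4$ conversion formulas of Table~\ref{T8}. These are special cases of Lemma~\ref{lem: Nk}: degree $3$ is $4(N_1-3N_2+3N_3-N_4)$, degree $2$ is $6(N_2-2N_3+N_4)$, degree $1$ is $4(N_3-N_4)$, and degree $0$ is $N_4$. The total is simply $N_0$, copied from the counting lemma. Nothing new has to be computed; what remains is the arithmetic in the two cases $p\equiv 3$ and $p\equiv 1$ mod $4$.

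For $p\equiv 3$ mod $4$ we have $N_2=N_3=N_4=1$. Hence degree $0$ gives $1$. Degree $1$ gives $4(1-1)=0$, and degree $2$ gives $6(1-2+1)=0$. Degree $3$ gives
\[4\big((p-2)^2-3+3-1\big)=4(p^2-4p+3).\]

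For $p\equiv 1$ mod $4$, write $s=\sigma(2E)$, $A=A_2(p)$, and substitute $N_2=6p-5+4s(p-1)$, $N_3=9+8s$, $N_4=1$. Degree $1$ is $4(8+8s)=32(1+s)$. For degree $2$ we compute
\[N_2-2N_3+N_4=6p-22+4s(p-1)-16s=6p-22+4s(p-5),\]
which matches the table after multiplying by $6$. For degree $3$ we need $N_1-3N_2+3N_3-N_4$. Using $N_1=(p+2)^2+4s(p+A^2)$, the constant-in-$s$ part is
\[p^2+4p+4-18p+15+27-1=p^2-14p+45.\]
The $s$-part is
\[4s(p+A^2)-12s(p-1)+24s=4s\big(A^2-2p+9\big).\]
This yields the stated entry.

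The only possible obstacle is bookkeeping of signs and constants in the degree-$3$ entry, which I would double-check. As a sanity check, the deficient counts should sum to $4N_1-6N_2+4N_3-N_4$.
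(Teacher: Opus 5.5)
Your proposal is correct and follows the paper's route exactly: the paper deduces Table~\ref{T13} from Counting Lemma~\ref{thm: HurwitzN} through the dimension-$4$ conversion in Table~\ref{T8}. Your arithmetic checks out in both residue classes mod $4$, including the degree-$3$ entry.
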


\addtocontents{toc}{\protect\vspace{1em}}

\end{document}